\documentclass[11pt]{article}

\usepackage[utf8]{inputenc}
\usepackage{amsmath, amssymb, amsthm, bm}
\usepackage{physics}
\usepackage{yhmath}
\usepackage{extpfeil}
\usepackage{graphicx}
\usepackage{hyperref}
\usepackage{mathtools}
\usepackage{booktabs}
\usepackage{geometry}
\usepackage{algorithm}
\usepackage{algorithmic}
\usepackage{caption}
\usepackage{subcaption}
\usepackage{tikz-cd}
\usepackage{algorithm, algorithmic}
\usepackage{float}

\newtheorem{theorem}{Theorem}[section]
\newtheorem{lemma}[theorem]{Lemma}
\newtheorem{defn}[theorem]{Definition}
\newtheorem{corollary}[theorem]{Corollary}
\newtheorem{proposition}[theorem]{Proposition}
\newtheorem{fact}[theorem]{Fact}
\newtheorem{example}[theorem]{Example}
\newtheorem{remark}[theorem]{Remark}

\title{Higher Degree $t$-Hermitian Forms and Positivity-Preserving Contractions}
\author{Isaac Dobes}
\date{\today}

\begin{document}

\maketitle

\begin{abstract}
    In this article we introduce higher-degree $t$-Hermitian forms, a tubal analogue of ordinary Hermitian forms of arbitrary degree. Through a synthesis of multilinear matrix multiplication and the $t$-product on third-order tensors, we show that $t$-Hermitian forms are in bijection with odd order tubal tensors satisfying certain symmetry conditions, which we call $t$-conjugate partial symmetry. After applying the Fast Fourier Transform along the tubal mode of the corresponding tubal tensor, $t$-Hermitian forms decompose into a family of classical Hermitian forms. This decomposition enables us to characterize positivity of $t$-Hermitian forms in terms of the spectra of the conjugate partially symmetric Fourier slices of its corresponding tubal tensor, yielding a tubal analogue of the spectral theorem for classical higher degree Hermitian forms. Then, we study classical Hermitian forms induced by contractions along the tubal mode of $t$-conjugate partially symmetric tensors, characterizing when such contractions preserve positivity and deriving quantitative lower bounds for the positivity margin of the resulting classical Hermitian forms. 
\end{abstract}

\section{Introduction}
\subsection{Prior Work \& Paper Outline}
Hermitian forms occupy a significant slice of the linear-algebraic landscape because questions regarding their positivity may be translated into spectral questions. In the simplest case, a Hermitian quadratic form $h$ on $\mathbb{C}^n$ is uniquely represented by an $n\times n$ Hermitian matrix $H$, and positivity of $h$ is equivalent to positivity of the spectrum of $H$. Higher-degree analogues arise from mixed homogeneous polynomials of complex variables and their formal conjugates. In particular, a real-valued Hermitian form of bidegree $(k,k)$ may be represented by an order $2k$ conjugate partially-symmetric coefficient tensor \cite{d2011hermitian,jiang2016characterizing}, and likewise positivity of the higher-degree Hermitian form is equivalent to positivity of the tensor's spectrum \cite{chen2025hat}. In addition to Hermitian forms, such tensors have been investigated in the context of tensor decompositions, tensor rank, tensor spectral theory, convex optimization, and even quantum entanglement (see for example, \cite{ni2019hermitian,nie2020hermitian,huang2021decompositions,fu2021decompositions}). 

Separately, a substantial body of research has developed around the $t$-product $*_t$ for third-order tensors. Introduced by M. Kilmer, C. Martin and L. Perrone \cite{kilmer2000third}, and further developed with the help of K. Braman \cite{braman2010third}, the $t$-product can be thought of as a multiplication operation on tube-valued matrices. One particularly nice feature of the $t$-product is that after a Fourier change of basis is applied to the tubes, slicewise the $t$-product reduces to ordinary matrix multiplication \cite{kilmer2013third}. This central property has led to burgeoning tubal matrix theory with notions of $t$-eigenvalues, the $t$-determinant, $t$-quadratic forms, $t$-positive definiteness, the $t$-exponential map, $t$-Jordan normal form, and so on (see for example, \cite{gleich2013power,qi2021t,zheng2021t,miao2021t}).

The purpose of the this paper is to develop a higher-degree theory of tube-valued Hermitian forms, bringing together key aspects of these two lines of research. We refer to these objects as $t$-Hermitian forms. Intrinsically, a degree $k$ $t$-Hermitian form is defined as the diagonal of a separately symmetric, tube-valued Hermitian $(k,k)$-sesquilinear form. In coordinates, such forms correspond uniquely to higher-order tubal tensors satisfying a condition that we call $t$-conjugate partial symmetry. After a Fourier change of basis, every $t$-Hermitian form decomposes into a family of ordinary real-valued Hermitian forms of bidegree $(k,k)$. Thus, $t$-Hermitian forms may be viewed as Fourier-packaged families of classical Hermitian forms.

Using this Fourier decomposition, we characterize positivity of a $t$-Hermitian form in terms of the $\widehat{H}$-eigenvalues of the Fourier-transformed frontal slices of the corresponding tubal tensor. We also introduce normalized symmetric matricizations of these slices, which yield matrix-based sufficient conditions for $t$-Hermitian positivity. Additionally, we also consider classical Hermitian forms induced from tensor contractions of coefficient tensors of $t$-Hermitian forms and characterize when such contractions preserve positivity. 

The specific outline of this paper is as follows. In Section \ref{The Different Representations of Classical Hermitian Forms} we review the various different ways one may represent a degree $k$ Hermitian form in $n$ complex variables: as the diagonal of a $(k,k)$-sesquilinear forms, a mixed homogeneous polynomial of bidegree $(k,k)$, a conjugate partially symmetric coefficient tensor $\mathcal{A}$, and as a compressed formal inner product involving the normalized symmetric matricization of $\mathcal{A}$. Then, in Section \ref{$t$-Hermitian Forms}, we will introduce the central object of study: degree $k$ $t$-Hermitian forms. In particular, we establish their bijection with $t$-conjugate partially symmetric tubal tensors, their Fourier decomposition into classical degree $k$ Hermitian forms, and the spectral characterization of their positivity. After that, in Section \ref{Application to Classical Hermitian Forms} we consider classical Hermitian forms obtained from tensor contractions of tubal tensors corresponding to $t$-Hermitian forms. In particular, we characterize when such contractions preserve positivity, establish quantitative lower bounds for resulting contraction-induced forms, and then demonstrate these results with an explicit numerical example. After a brief conclusion summarizing the main results of this manuscript, we then provide a short appendix including proofs of a few auxiliary results used throughout the paper. 

\subsection{Notation/Terminology}
\begin{itemize}
    \item For a positive integer $n$, $[n] := \{1,2,\dots,n\}$. 
    \item $\mathcal{A}$ will denote a coefficient tensor, which we refer to as a \textbf{hypermatrix}. If $\mathcal{A}\in \mathbb{C}^{n_1\times\dots\times n_N}$, then we say that $\mathcal{A}$ has \textbf{order} $N$. When $n_1=n_2=\dots=n_N$, we say that $\mathcal{A}$ is \textbf{cubical}.  
    \item For any hypermatrix $\mathcal{A}\in \mathbb{C}^{n_1\times\dots\times n_N}$, $\mathcal{A}[i_1,\dots,i_N]$ denotes the $(i_1,\dots,i_N)$-coordinate of $\mathcal{A}$.
    \item $\langle\text{ , }\rangle$ denotes the Frobenius inner product, which for order $N$ hypermatrices $\mathcal{A},\mathcal{B}\in \mathbb{C}^{n_1\times\dots\times n_N}$ is defined as 
    \[\langle \mathcal{A},\mathcal{B}\rangle := \sum\limits_{i_1,\dots,i_N=1}^{n_1,\dots,n_N}\mathcal{A}[i_1,\dots,i_N]\mathcal{B}[i_1,\dots,i_N].\]
    \item $\|\cdot\|$ denotes the Frobenius norm, induced by the Frobenius inner product. 
    \item $\otimes$ denotes the formal tensor product, which in coordinates is realized by the Kronecker product. $\odot$ will either denote the formal symmetric tensor product: $u\odot v := \frac{1}{2}(u\otimes v+v\otimes u)$, or the matrix Hadamard product: if $A\in \mathbb{C}^{m\times n}$ and $B\in \mathbb{C}^{m\times n}$, then $A\odot B\in \mathbb{C}^{m\times n}$ such that $(A\odot B)[i,j] = A[i,j]B[i,j]$ for each $i\in [m]$ and $j\in [n]$; context will make it clear which operation $\odot$ denotes. 
    \item $\circledast$ denotes the discrete circular convolution operator: for $p$-dimensional vectors/tubes, 
    \[(a\circledast b)[\ell] = \sum\limits_{m=1}^pa[m]b\left[\big((\ell-m)\bmod p\big)+1\right]\] 
    for each $\ell\in [p]$. 
    \item $\mathrm{fft}$ denotes the Fast Fourier Transform, equivalent to the unnormalized Discrete Fourier Transform matrix; in particular, if $v$ is an $p$-dimensional vector, then $\mathrm{fft}(v) = \sqrt{p}\mathrm{DFT}_pv$, where recall $\mathrm{DFT}_p = \frac{1}{\sqrt{p}}\left[\omega_p^{(i-1)(j-1)}\right]_{p\times p}$ with $\omega_p := e^{-\frac{2\pi i}{p}}$. 
    \item $\circ$ denotes the Segre outer product: $\mathcal{A}\in \mathbb{C}^{m_1\times \dots m_M}$ and $\mathcal{B}\in \mathbb{C}^{n_1\times\dots\times n_N}$, then $\mathcal{A}\circ \mathcal{B}\in \mathbb{C}^{m_1\times\dots\times m_M\times n_1\times\dots\times n_N}$ such that $(\mathcal{A}\circ \mathcal{B})[i_1,\dots,i_M,j_1\dots,j_N] = \mathcal{A}[i_1,\dots,i_M]\mathcal{B}[j_1,\dots,j_N]$. 
\end{itemize}

\section{The Different Representations of Classical Hermitian Forms}\label{The Different Representations of Classical Hermitian Forms}
In this section we give a broad overview of arbitrary degree Hermitian forms and the various different ways we may be able to represent them, in particular in terms of both hypermatrices and certain types of matrices. First, we begin with a high-level definition of Hermitian forms as the diagonal of sesquilinear forms. Next, by considering their coefficient tensors, we show how we may view them as as a hypermatrix product, which naturally facilitates the study of higher degree positive Hermitian forms. After reviewing the basics of higher degree Hermitian positive forms, we then turn our attention to how we can represent such Hermitian forms as formal inner products in terms of matrices through tensor unfolding. After discussing the sub-optimal implications this point of view has when it comes to the study of higher degree Hermitian positive forms, we then introduce the notion symmetric matricization, which takes the higher order symmetries of coefficient tensors into account. In addition to allowing us to compress the formal inner product representation of higher degree Hermitian forms, symmetric matricization enables us to derive simple yet useful sufficient condition for strict Hermitian positivity of higher degree Hermitian forms. 

\subsection{High-Level Definition of Hermitian Forms}\label{High-Level Definition of Hermitian Forms}
Hermitian quadratic forms are typically defined as the diagonal of a two-argument sesquilinear form. Higher degree Hermitian forms admit an analogous description in terms of sesquilinear maps with two separately symmetric blocks of arguments, as consequence from the polarization correspondence between homogeneous polynomials and symmetric multilinear maps \cite{thomas2014polarization}. Below we make the relevant higher bi-degree structure explicit. 
\begin{defn}
    Let $V$ be a complex vector space with $\dim(V)=n$. A \textbf{$\boldsymbol{(k,k)}$-sesquilinear form on $V$} is a map 
    \[S:\underbrace{V\times\dots\times V}_{2k\text{ times}}\rightarrow \mathbb{C}\]
    which is conjugate-linear in each of its first $k$ arguments and linear in each of its last $k$ arguments. That is, for any $\alpha,\beta\in \mathbb{C}$ and $u_i,v_i,w_i\in V$, 
    \begin{align*}
        &S(u_1,\dots,\alpha u_i+\beta w_i,\dots,u_k;v_1,\dots,v_k) \\
        &\quad = \overline{\alpha}S(u_1,\dots,u_i,\dots,u_k;v_1,\dots,v_k)+\overline{\beta}S(u_1,\dots,w_i,\dots,u_k;v_1,\dots,v_k)
    \end{align*}
    and 
    \begin{align*}
        &S(u_1,\dots,u_k;v_1,\dots,\alpha v_i + \beta w_i,\dots,v_k) \\
        &\quad = \alpha S(u_1,\dots,u_k;v_1,\dots,v_i,\dots,v_k)+\beta S(u_1,\dots,u_k;v_1,\dots,w_i,\dots,v_k).
    \end{align*}
    We say that $S$ is \textbf{separately symmetric} if 
    \[S(u_{\sigma(1)},\dots,u_{\sigma(k)};v_{\tau(1)},\dots,v_{\tau(k)}) = S(u_1,\dots,u_k;v_1,\dots,v_k)\]
    for all $u_1,\dots,u_k,v_1,\dots,v)k\in V$ and every $\sigma,\tau\in S_k$. We say that $S$ is \textbf{Hermitian} if 
    \[S(u_1,\dots,u_k;v_1,\dots,v_k) = \overline{S(v_1,\dots,v_k;u_1,\dots,u_k)}\]
    for all $u_1,\dots,u_k,v_1,\dots,v_k\in V$. 
\end{defn}
\begin{remark}
    Separate symmetry allows $S$ to be regarded as an ordinary two-argument sesquilinear form 
    \[\widetilde{S}:\mathrm{Sym}^k(V)\times \mathrm{Sym}^k(V)\rightarrow \mathbb{C}\]
    satisfying 
    \[\widetilde{S}(u_1\odot\dots\odot u_k, v_1\odot\dots\odot v_k) = S(u_1,\dots,u_k;v_1,\dots,v_k),\]
    with $\odot$ denoting the symmetric tensor product. Moreover, $S$ is Hermitian if and only if $\widetilde{S}$ is Hermitian in the usual two-argument sense. Thus, a separately symmetric $(k,k)$-sesquilinear form on $V$ may equivalently be viewed as an ordinary Hermitian sesquilinear form on $\mathrm{Sym}^k(V)$. 
\end{remark}
\begin{defn}
    Let $S$ be a separately symmetric Hermitian $(k,k)$-sesquilinear form on $V$. The map 
    \begin{align*}
        h_S:V&\longrightarrow \mathbb{C} \\
        v&\mapsto S(\underbrace{v,\dots,v}_{k\text{ times}};\underbrace{v,\dots,v}_{k\text{ times}})
    \end{align*}
    is called the \textbf{degree $\boldsymbol{k}$ Hermitian form} associated with $S$. 
\end{defn}
\begin{remark}\label{Properties of Degree k Hermitian Forms}
    Equivalently, in terms of the induced Hermitian sesquilinear form $\widetilde{S}$ on $\mathrm{Sym}^k(V)$, 
    \[h_S(v) = \widetilde{S}(v^{\odot k},v^{\odot k}).\]
    Note also that the Hermitian property of $S$ implies that 
    \[\overline{h_S(v)} = h_S(v)\]
    for every $v\in V$, hence $h_S$ is a real-valued function. Moreover, by multi-sesquilinearity, 
    \[h_S(\lambda v) = |\lambda|^{2k}h_S(v)\]
    for all $\lambda\in \mathbb{C}$. Thus, $h_S$ is a homogenous function of total degree $2k$. 

    After fixing a basis $\{e_1,\dots,e_n\}$ for $V$, let $x_1,\dots,x_n\in V^*$ denote the coordinate functionals so that $x_i(v)$ is the $i^{th}$-coordinate of $v$ in the basis $\{e_1,\dots,e_n\}$, for each $i\in [n]$. and every $v\in V$. Then $v = \sum\limits_{i=1}^nx_i(v)e_i$ for every $v\in V$, and so in the basis $\{e_1,\dots,e_n\}$ $h_S$ may equivalently be viewed as a function of $x_1,\dots,x_n$. Furthermore, denoting
    \[a_{i_1\dots i_k j_1\dots j_k} := S(e_{i_1},\dots,e_{i_k};e_{j_1},\dots,e_{j_k})\]
    for each $i_1,\dots,i_k,j_1,\dots,j_k\in [n]$, by multi-sesquilinearity of $S$ it follows that 
    \begin{equation}\label{formal polynomial expression of Hermitian form}
        h_S(x_1,\dots,x_n) = \sum\limits_{\substack{i_1,\dots,i_k=1 \\ j_1,\dots,j_k=1}}^na_{i_1\dots i_k j_1\dots j_k}\overline{x_{i_1}}\dots \overline{x_{i_k}}x_{j_1}\dots x_{j_k},
    \end{equation}
    where for convenience we have omitted the input arguments for the $x_i$; that is, in a fixed basis, $h_S$ is a bihomogenous mixed polynomial of bi-degree $(k,k)$. Moreover, by separate symmetry and the Hermitian property of $S$, the coefficients in equation \eqref{formal polynomial expression of Hermitian form} satisfy 
    \[\overline{a_{j_{\sigma(1)}\dots j_{\sigma(k)}i_{\tau(1)}\dots i_{\tau(k)}}} = a_{i_1\dots i_k j_1\dots j_k}\]
    for all $i_1,\dots,i_k,j_1,\dots,j_k\in [n]$ and every $\sigma,\tau\in S_k$. Such polynomials are an instance of what has been referred to in the literature as Hermitian symmetric polynomials; see for example \cite{d2011hermitian}. Since the higher-degree setting is particularly important for our purposes, we shall refer to polynomials of the form in equation \eqref{formal polynomial expression of Hermitian form} as degree $k$ Hermitian forms. 
\end{remark}

\subsection{The Hypermatrix Representation of Hermitian Forms}
From Remark \ref{Properties of Degree k Hermitian Forms}, a degree $1$ Hermitian form (which has total degree $2$ and hence is commonly referred to as a Hermitian quadratic form) $h$ in the indeterminates $x_1,\dots,x_n$ is of the form: 
\begin{equation}\label{Hermitian forms - definition}
    h(x_1,\dots,x_n) = \sum\limits_{i,j=1}^nh_{ij}\overline{x}_ix_j.
\end{equation}
where 
\[H := [h_{ij}]\in \mathbb{C}^{n\times n}\] 
is a Hermitian matrix, and such that $ev_z(\overline{x}) = \overline{z}$ for any $z\in \mathbb{C}$. In addition to guaranteeing real output for when $x_1,...,x_n$ are evaluated at complex numbers, the Hermitian property of $H$ also guarantees that the correspondence $h\mapsto H$ is well-defined; in other words, for fixed choice of basis, the map 
\begin{align*}
    \{\text{Hermitian quadratic forms in }x_1,\dots,x_n\} &\longrightarrow \{n\times n\text{ Hermitian matrices}\} \\
    h(x_1,\dots,x_n) &\mapsto H
\end{align*}
is a bijection. Moreover, setting 
\[x := \left[\begin{array}{c}
    x_1 \\
    \vdots \\
    x_n
\end{array}\right],\]
we may we may rewrite equation \eqref{Hermitian forms - definition} as the matrix product
\begin{equation}\label{Hermitian forms - matrix expression}
    h(x_1,...,x_n) = x^{\dagger}Hx,
\end{equation}
which we refer to as the \textbf{matrix representation} of the Hermitian quadratic form $h$. The notion of matrix representation of degree $1$ Hermitian forms is naturally extended to degree $k$ Hermitian forms in terms of hypermatrices via multilinear matrix multiplication, which we define as follows:
\begin{defn}\label{multilinear-matrix-multiplication}
    Let $N\geq 2$ and for each $j\in [N]$, let $x^j = \left[\begin{array}{ccc}
        x^j_1 & \dots & x^j_{n_j}
    \end{array}\right]^T$ be an $n_j$-dimensional vector of complex variables. Suppose also that $\mathcal{A}\in \mathbb{C}^{n_1\times\dots\times n_N}$ is an order $N$ hypermatrix. The \textbf{(right) multilinear matrix multiplication\footnote{In general the multilinear matrix multiplication operation may be defined in terms of a tuple consisting of matrices or a combination of vectors in matrices, in which case the output may be a vector, matrix, or hypermatrix depending on the dimensions of each tuple input; in particular, when the tuple consists of invertible matrices, multilinear matrix multiplication reduces to the change of basis transformation for covariant tensors (see \cite{lim2013tensors} for the more general formulation of multilinear matrix multiplication and further discussion of its tensorial interpretation). However for our purposes, we will only care about the case when the tuple inputs are vectors, hence for the sake of simplicity we only define it in terms of a tuple with vector inputs.} of $\mathcal{A}$} with the tuple $(x^1,\dots,x^N)$ is given by the polynomial 
    \[\mathcal{A}*(x^1,\dots,x^N) := \sum\limits_{i_1,...,i_N}^{n_1\dots,n_N}\mathcal{A}_{i_1\dots i_N}x^1_{i_1}\cdot...\cdot x^N_{i_N}.\] 
\end{defn}
In particular, viewing order $2$ hypermatrices as matrices, if $A$ is an $m\times n$ matrix, $x$ is an $m$-dimensional vector of complex variables, and $y$ is an $n$-dimensional vector of complex variables, then  
\begin{equation}\label{Relationship between MM and MMM}
    A*(x,y) = x^TAy,
\end{equation} 
consequently it follows from equation \eqref{Hermitian forms - matrix expression} that every degree $1$ Hermitian form $h$ in the variables $x_1,\dots,x_n$ can be expressed as the formal multilinear matrix product
\begin{equation}\label{Hermitian quadratic form as MMM product}
    h(x_1,...,x_n) = H*(\overline{x},x)
\end{equation}
for some $n\times n$ Hermitian matrix $H$. Indeed, by Remark \ref{Properties of Degree k Hermitian Forms}, every degree $k$ Hermitian form $h$ in the variables $x_1,...,x_n$ may expressed as the formal multilinear matrix product:
\begin{equation}\label{hypermatrix representation}
    h(x_1,...,x_n) = \mathcal{A}*(\underbrace{\overline{x},...,\overline{x}}_{k\text{ times}},\overbrace{x,...,x}^{k\text{ times}})
\end{equation}
for some order $2k$ cubical hypermatrix $\mathcal{A} = [a_{i_1...i_kj_1...j_k}]\in \mathbb{C}^{n\times...\times n}$ such that 
\[\overline{a_{j_{\sigma(1)}...j_{\sigma(k)}i_{\tau(1)}...i_{\tau(k)}}} = a_{i_1\dots i_k j_1\dots j_k}\] for each $i_1,...,i_k,j_1,...,j_k\in [n]$ and every $\sigma,\tau\in S_k$. Such hypermatrices $\mathcal{A}$ are commonly referred to as \textbf{conjugate partially symmetric} \cite{jiang2016characterizing}, and we refer to equation \eqref{hypermatrix representation} as the \textbf{hypermatrix representation} of a degree $k$ Hermitian form $h$. It then follows that there is a bijection between degree $k$ Hermitian forms and order $2k$ Hermitian partially symmetric hypermatrices (indeed this was explicitly proven in \cite{jiang2016characterizing}). We formally summarize with the following fact.
\begin{fact}\label{Hermitian Form Bijection}\cite[Proposition 3.8]{jiang2016characterizing}
    Let $h$ be a degree $k$ Hermitian form in the complex variables $x_1,\dots,x_n$, and let $\mathcal{A} = [a_{i_1\dots i_k j_1\dots j_k}]\in \mathbb{C}^{n\times\dots\times n}$ be an order $2k$ \textbf{conjugate partially symmetric} hypermatrix, meaning that 
    \[\overline{a_{j_{\sigma(1)}\dots j_{\sigma(k)}i_{\tau(1)}\dots i_{\tau(k)}}} = a_{i_1\dots i_k j_1\dots j_k}\]
    for all $i_1,\dots,i_k,j_1,\dots,j_k\in [n]$ and every $\sigma,\tau\in S_k$. Then there is a bijection between order $2k$ conjugate partially symmetric hypermatrices $\mathcal{A}\in \mathbb{C}^{n\times\dots\times n}$ and degree $k$ Hermitian forms $h$ in the complex variables $x_1,\dots,x_n$, and in particular it is given by the map 
    \[\mathcal{A}\mapsto h_{\mathcal{A}}(x) := \mathcal{A}*(\overline{x},\dots,\overline{x},x,\dots,x)\]
    (with $x := \left[\begin{array}{ccc}
        x_1 & \dots & x_n
    \end{array}\right]^T$). 
\end{fact}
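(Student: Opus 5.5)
To prove Fact \ref{Hermitian Form Bijection}, I would show that the map $\mathcal{A}\mapsto h_{\mathcal{A}}$ is well-defined, surjective, and injective, working with degree $k$ Hermitian forms as formal bihomogeneous mixed polynomials of bidegree $(k,k)$ in $x_1,\dots,x_n,\overline{x_1},\dots,\overline{x_n}$, as in equation \eqref{formal polynomial expression of Hermitian form}.

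\textbf{Well-definedness.} Given a conjugate partially symmetric $\mathcal{A}$, define
\[S(u_1,\dots,u_k;v_1,\dots,v_k) := \mathcal{A}*(\overline{u_1},\dots,\overline{u_k},v_1,\dots,v_k).\]
This $S$ is conjugate-linear in the first $k$ slots and linear in the last $k$ slots. Invariance of $\mathcal{A}$ under $(\sigma,\tau)$ acting on the two index blocks separately gives separate symmetry. Conjugating and swapping the blocks, using $\overline{a_{j\dots i\dots}}=a_{i\dots j\dots}$ with trivial permutations, gives the Hermitian property. Hence $h_{\mathcal{A}}=h_S$ is a degree $k$ Hermitian form.

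\textbf{Surjectivity.} This is exactly Remark \ref{Properties of Degree k Hermitian Forms}. Given $h=h_S$, set $a_{i_1\dots i_kj_1\dots j_k}=S(e_{i_1},\dots;e_{j_1},\dots)$. The remark shows that $\mathcal{A}$ is conjugate partially symmetric and that $h=\mathcal{A}*(\overline{x},\dots,x,\dots)$.

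\textbf{Injectivity.} This is the main step. Suppose $h_{\mathcal{A}}=h_{\mathcal{B}}$, and set $\mathcal{C}=\mathcal{A}-\mathcal{B}$. Then $\mathcal{C}$ is again conjugate partially symmetric, since the condition is real-linear, and $h_{\mathcal{C}}=0$.

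Group the terms of $h_{\mathcal{C}}$ by monomials $\overline{x}^{\alpha}x^{\beta}$ with $|\alpha|=|\beta|=k$. The coefficient of such a monomial is the sum of $c_{i_1\dots i_kj_1\dots j_k}$ over all index tuples whose first block has multiset $\alpha$ and whose second block has multiset $\beta$. By separate symmetry all these entries are equal, so the coefficient equals
\[\binom{k}{\alpha}\binom{k}{\beta}\,c_{\alpha\beta}.\]
Since the monomials $\overline{x}^{\alpha}x^{\beta}$ are linearly independent as formal polynomials, every $c_{\alpha\beta}$ vanishes, and therefore $\mathcal{C}=0$.

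If instead one insists that $h$ is a function on $\mathbb{C}^n$ rather than a formal polynomial, I would add one more step: the monomials $\overline{z}^{\alpha}z^{\beta}$ are linearly independent as functions. One way to see this is to treat $z$ and $\overline{z}$ as independent variables via Wirtinger derivatives $\partial_z,\partial_{\overline z}$ applied at $0$. Another is to write $z_j=r_je^{i\theta_j}$ and use Fourier orthogonality in $\theta$ together with polynomial identity in $r$.

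The main obstacle is this identification of polynomial coefficients with tensor entries. Its key point is that separate symmetry is what makes the coefficient-to-entry map injective: without partial symmetry many tensors would give the same form. The Hermitian part of the symmetry is only needed for well-definedness, namely that $h$ is real-valued and Hermitian.
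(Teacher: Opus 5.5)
Your proposal is correct and follows the route the paper indicates: existence of a representing tensor comes from Remark \ref{Properties of Degree k Hermitian Forms} via $a_{i_1\dots i_k j_1\dots j_k}=S(e_{i_1},\dots,e_{i_k};e_{j_1},\dots,e_{j_k})$, while the paper itself defers uniqueness to \cite{jiang2016characterizing}. Your injectivity step correctly fills that gap: separate symmetry makes the coefficient of $\overline{x}^{\alpha}x^{\beta}$ equal to $\binom{k}{\alpha}\binom{k}{\beta}c_{\alpha\beta}$, and linear independence of these monomials, formally or as functions, then forces $\mathcal{C}=0$.
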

Thus, we may unambiguously associate any degree $k$ Hermitian form with a unique order $2k$ Hermitian partially symmetric hypermatrix, and vice versa. 

\subsubsection{Positive Definite Hermitian Forms}
The hypermatrix representation of Hermitian forms naturally facilitates the study higher degree positive Hermitian forms. For a more in depth treatment of the theory, see \cite{chen2025hat}. 
\begin{defn}\label{Hermitian positive definite form - classical}
    A degree $k$ Hermitian form $h$ in the complex variables $x := \left[\begin{array}{ccc}
        x_1 & \dots & x_n 
    \end{array}\right]^T$ is \textbf{Hermitian positive definite (semidefinite)} if $h$ is positive under evaluation; that is, if 
    \[h\big(\mathrm{Ev}_z(x)\big) > 0\text{ $(\geq 0)$}\]
    for every nonzero $z\in \mathbb{C}^n$, with $\mathrm{Ev}_z(x) = \left[\begin{array}{c}
        z_1 \\
        \vdots \\
        z_n
    \end{array}\right]$ and $\mathrm{Ev}_z(\overline{x}) = \left[\begin{array}{c}
        \overline{z_1} \\
        \vdots \\
        \overline{z_n}
    \end{array}\right] =: \overline{z}$. 
\end{defn}
\begin{remark}
    Equivalently, if $\mathcal{A}$ denotes the unique conjugate partially symmetric hypermatrix representation of $h$, then $h$ is Hermitian positive definite (semidefinite) if and only if 
    \begin{equation}\label{Heermitian positive definite tensor}
        \mathcal{A}*(\underbrace{\overline{z},\dots,\overline{z}}_{k\text{ times}},\overbrace{z,\dots,z}^{k\text{ times}}) > 0\text{ $(\geq 0)$}
    \end{equation}
    for all nonzero $z\in \mathbb{C}^n$. 
    For this reason, we say that the hypermatrix $\mathcal{A}$ itself is \textbf{Hermitian positive definite (semidefinite)} if \eqref{Heermitian positive definite tensor} holds for all nonzero $z\in \mathbb{C}^n$. 
\end{remark}
The theory of tensor eigenvalues was developed independently by both L.H. Lim and L. Qi in \cite[2005]{lim2005singular} and \cite[2005]{qi2005eigenvalues} respectively, however in both of their work the base field is $\mathbb{R}$. In \cite[2007]{ni2007degree}, G. Ni, L. Qi, F. Wang, and Y. Wang introduce the notion of $E$-eigenvalues of tensors, where the base field is $\mathbb{C}$, and in \cite{zhang2012quantum}, X. Zhang and L. Qi introduces the notion of $Q$-eigenvalues, defined in terms of complex variables. Then, B. Jiang, Z. Li, and S. Zhang, as well as H. Chen and Y. Yang, continue this study of tensor eigenvalues, defining the following classes of tensor eigenvalues: 
\begin{defn} 
    Let $\mathcal{A} = [a_{i_1...i_kj_1...j_k}]\in \mathbb{C}^{n\times...\times n}$ be an order $2k$ cubical hypermatrix. In \cite{jiang2016characterizing}, Jiang et al. say that $\lambda\in \mathbb{C}$ is an \textbf{$\boldsymbol{\widehat{C}}$-eigenvalue} of $\mathcal{A}$ if there exists a vector $z = [z_1,...,z_n]^T\in \mathbb{C}^n$ such that 
    \[z^{\dagger}z = 1\]
    and 
    \[\mathcal{A}*(I_n,\underbrace{\overline{z},...,\overline{z}}_{k-1\text{ times}},\overbrace{z,...,z}^{k\text{ times}}) = \lambda z.\]
    
    In \cite{chen2025hat}, Chen \& Yang consider a slightly more general notion than that of $\widehat{C}$-eigenvalues, and say that  $\lambda\in \mathbb{C}$ is an \textbf{$\boldsymbol{\widehat{H}}$-eigenvalue} of $\mathcal{A}$ if there exists a vector $z = [z_1,...,z_n]^T\in \mathbb{C}^n$ such that 
    \[\sum\limits_{i_2,...,i_k,j_1,...,j_k=1}^na_{ii_2...i_kj_1...j_k}\overline{z}_{i_2}...\overline{z}_{i_k}z_{j_1}...z_{j_k} = \lambda |z_i|^{2(k-1)}z_i\quad \forall i=1,...,n,\]
    or equivalently
    \[\mathcal{A}*(I_n,\underbrace{\overline{z},...,\overline{z}}_{k-1\text{ times}},\overbrace{z,...,z}^{k\text{ times}}) = \lambda |z| z.\]
\end{defn}
Chen \& Yang then prove the following result, providing a spectral characterization of Hermitian positive definite hypermatrices. 
\begin{theorem}[Spectral Theorem of Hermitian Forms]\cite[Theorem 1.4]{chen2025hat}\label{Spectral Theorem of Hermitian Forms}
    Let $\mathcal{A}\in \mathbb{C}^{n\times...\times n}$ be an order $2k$ conjugate partially symmetric hypermatrix. Then there always exists $\widehat{H}$-eigenvalues (and hence $\widehat{C}$-eigenvalues) of $\mathcal{A}$. Moreover, $\mathcal{A}$ is Hermitian positive definite (semi-definite) if and only if all of the $\widehat{H}$-eigenvalues of $\mathcal{A}$ are positive (nonnegative). 
\end{theorem}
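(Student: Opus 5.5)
The plan is to treat $h_{\mathcal{A}}(z)=\mathcal{A}*(\overline{z},\dots,\overline{z},z,\dots,z)$ as a real-valued smooth function on $\mathbb{C}^n\cong\mathbb{R}^{2n}$ and look at it on the compact "$2k$-sphere"
\[
\mathbb{S}:=\Big\{z\in\mathbb{C}^n:\ g(z):=\sum_{i=1}^n|z_i|^{2k}=1\Big\}.
\]
By the homogeneity in Remark \ref{Properties of Degree k Hermitian Forms}, $h_{\mathcal{A}}(\lambda z)=|\lambda|^{2k}h_{\mathcal{A}}(z)$. Since $g$ has the same homogeneity, $h_{\mathcal{A}}$ is positive (nonnegative) on all nonzero $z$ if and only if $\lambda_{\min}:=\min_{\mathbb{S}}h_{\mathcal{A}}$ is positive (nonnegative). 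I read the eigen-equation componentwise, as $\mathcal{A}*(e_i,\overline{z},\dots,\overline{z},z,\dots,z)=\lambda|z_i|^{2(k-1)}z_i$ for each $i$, and I require eigenvectors to be nonzero. The argument then has two parts: show that $\lambda_{\min}$ is an $\widehat{H}$-eigenvalue (this gives existence), and show that every $\widehat{H}$-eigenvalue is at least $\lambda_{\min}$.

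\emph{Step 1 (Rayleigh identity).} Suppose $(\lambda,z)$ is an eigenpair with $z\neq0$. Multiply the $i$-th equation by $\overline{z_i}$ and sum over $i$. The left side becomes $h_{\mathcal{A}}(z)$ and the right side becomes $\lambda\sum_i|z_i|^{2k}$, so
\[
\lambda=\frac{h_{\mathcal{A}}(z)}{g(z)}.
\]
In particular $\lambda$ is real. Rescaling $z$ to lie on $\mathbb{S}$ leaves this quotient unchanged, so $\lambda\ge\lambda_{\min}$.

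\emph{Step 2 (the minimizer is an eigenvector).} By compactness, $h_{\mathcal{A}}$ attains its minimum on $\mathbb{S}$ at some $z^\ast$. I will apply the real Lagrange multiplier rule, written in Wirtinger derivatives: for real-valued $f$, the real gradient vanishes exactly when $\partial f/\partial\overline{z_i}=0$ for all $i$.

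1. Using conjugate partial symmetry, all $k$ conjugated slots contribute equally, so $\partial h_{\mathcal{A}}/\partial\overline{z_i}=k\,\mathcal{A}*(e_i,\overline{z},\dots,\overline{z},z,\dots,z)$.
2. For the constraint, $\partial g/\partial\overline{z_i}=k|z_i|^{2(k-1)}z_i$.
3. The constraint is regular on $\mathbb{S}$: the real gradient of $g$ is nonzero there, because $\sum_i\overline{z_i}\,\partial g/\partial\overline{z_i}=k\,g(z)=k\neq0$.
4. Lagrange therefore gives a real $\mu$ with $\mathcal{A}*(e_i,\overline{z^\ast},\dots,z^\ast)=\mu|z^\ast_i|^{2(k-1)}z^\ast_i$ for every $i$.
5. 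By Step 1, $\mu=h_{\mathcal{A}}(z^\ast)=\lambda_{\min}$.

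So $\widehat{H}$-eigenvalues exist, and the smallest one is exactly $\lambda_{\min}$.

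\emph{Step 3 (conclusion).} Combining the two steps, $h_{\mathcal{A}}$ is Hermitian positive definite (semidefinite) $\iff$ $\lambda_{\min}>0$ ($\ge0$) $\iff$ every $\widehat{H}$-eigenvalue is $>0$ ($\ge0$). The forward direction of the second equivalence uses Step 1, since every eigenvalue is at least $\lambda_{\min}$. The reverse direction uses Step 2, since $\lambda_{\min}$ is itself an eigenvalue.

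\emph{Existence of $\widehat{C}$-eigenvalues.} The parenthetical claim about $\widehat{C}$-eigenvalues follows the same way, by minimizing $h_{\mathcal{A}}$ over the Euclidean sphere $z^\dagger z=1$ instead of $\mathbb{S}$.

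\emph{Main obstacle.} I expect the only delicate point to be the derivative computation in Step 2. It must be justified that the Wirtinger derivative of the real function $h_{\mathcal{A}}$ with respect to $\overline{z_i}$ collapses to $k$ copies of the same contraction. This relies on the symmetry $\overline{a_{j_{\sigma}i_{\tau}}}=a_{i\,j}$ of Fact \ref{Hermitian Form Bijection}, together with the observation that a real-valued critical-point condition is equivalent to the vanishing of the $\overline{z}$-derivatives alone.
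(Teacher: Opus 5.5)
Your proof is correct. The paper does not prove this statement at all: it is quoted from \cite[Theorem 1.4]{chen2025hat} and used as a black box. So your proposal is an independent, self-contained argument rather than a reconstruction of an in-paper proof.

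It is the Hermitian counterpart of the standard variational proof for $H$-eigenvalues of even-order real symmetric tensors \cite{qi2005eigenvalues}. The Rayleigh identity $\lambda=h_{\mathcal{A}}(z)/\sum_i|z_i|^{2k}$ makes every $\widehat{H}$-eigenvalue real and at least $\min_{\mathbb{S}}h_{\mathcal{A}}$. The Lagrange condition at a minimizer then shows that this minimum is itself an $\widehat{H}$-eigenvalue. Your Wirtinger computation is sound: conjugate partial symmetry gives $a_{\mathbf{i}\mathbf{j}}=a_{\tau\cdot\mathbf{i},\mathbf{j}}$, so the $k$ conjugated slots contribute equally, and for a real-valued function the real gradient is $2\,\partial/\partial\overline{z}$.

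Compared with the citation, your route gives slightly more, since it identifies the smallest $\widehat{H}$-eigenvalue as $\min_{\mathbb{S}}h_{\mathcal{A}}$. It also makes explicit two conventions that the paper's definition leaves implicit:
\begin{itemize}
\item eigenvectors must be nonzero, since otherwise every $\lambda$ qualifies and the equivalence fails;
\item the eigen-equation is the componentwise one with $|z_i|^{2(k-1)}z_i$, not the written shorthand $\lambda|z|z$.
\end{itemize}

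Your separate treatment of $\widehat{C}$-eigenvalues, via the constraint $z^{\dagger}z=1$, is also the right call; there the eigenvalue is the multiplier divided by $k$. For $k\ge 2$ the two eigen-equations differ, so existence of $\widehat{C}$-eigenvalues does not follow directly from existence of $\widehat{H}$-eigenvalues. The word ``hence'' in the statement should really be read as ``by the same argument with a different constraint.''
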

In the next main section of this article, we will generalize this theorem to certain types of order $2k+1$ hypermatrices in terms of the $t$-product. 

\subsection{The Matrix Representations of Higher Degree Hermitian Forms}
Since a degree $k$ Hermitian form in the complex variables $x_1,\dots,x_n$ may be uniquely represented by an order $2k$ conjugate partially symmetric hypermatrix $\mathcal{A}\in \mathbb{C}^{n\times\dots\times n}$, which itself consists of $n^{2k}$ entries, by unfolding this hypermatrix into an $n^k\times n^k$ matrix we should also be able to represent said Hermitian form as a matrix. To represent arbitrary degree $k$ Hermitian forms as vector-matrix-vector products, we will need a well-defined protocol for unfolding tensors. For a general overview of tensor unfolding, see S. Y. Chang's article \cite{chang2023tensor}. Note, however, that we present a slightly modified protocol for tensor unfolding (so that later formulas in the appendix are easier to derive), which we define below.
\begin{defn}[Tensor Unfolding]\label{Matricization}
    Let $\mathcal{A} = [a_{i_1\dots i_M j_1\dots j_N}]\in \mathbb{C}^{m_1\times...\times m_M\times n_1\times...\times n_N}$, $\mathbf{m} = (m_1,...,m_M)$ and $\mathbf{n} = (n_1,...,n_N)$, $|\mathbf{m}| := m_1\cdot...\cdot m_M$, $|\mathbf{n}| := n_1\cdot...\cdot n_N$, and lastly let $\mathbf{i} = (i_1,...,i_M)\in [m_1]\times...\times [m_M]$ and $\mathbf{j} = (j_1,...,j_N)\in [n_1]\times...\times [n_N]$. Then the \textbf{$\boldsymbol{(m,n)}$-matricization} of $\mathcal{A}$ is the $|\mathbf{m}|\times |\mathbf{n}|$ matrix, denoted $M_{\mathbf{m}\times \mathbf{n}}(\mathcal{A})$, such that 
    \[\big(M_{\mathbf{m}\times \mathbf{n}}(\mathcal{A})\big)_{\psi(\mathbf{i},\mathbf{m}),\psi(\mathbf{j},\mathbf{n})} = a_{\mathbf{i}\mathbf{j}},\]
    where  
    \[\psi(\mathbf{i},\mathbf{m}) := 1+\sum\limits_{r=1}^M(i_r-1)\prod\limits_{s=1}^{M-r}m_{M-s}\quad\text{and}\quad \psi(\mathbf{j},\mathbf{n}) := 1+\sum\limits_{r=1}^N(j_r-1)\prod\limits_{s=1}^{N-r}n_{N-s}.\]
    In the special case when $M=N=k$ and $m_i=n_i=n$ for each $i\in [k]$, we define the \textbf{cubically balanced matricization} of $\mathcal{A}$, denoted $M_{cb}(\mathcal{A})$, to be the $n^k\times n^k$ matrix given by the $(\mathbf{n},\mathbf{n})$-matricization of $\mathcal{A}$. 
\end{defn}
\begin{remark}
    In particular, for an order $2k$ cubical hypermatrix $\mathcal{A}\in \mathbb{C}^{n\times\dots\times n}$, the cubically balanced matricization of $\mathcal{A}$ is given by 
    \[\big(M_{cb}(\mathcal{A})\big)_{\psi(\mathbf{i}),\psi()\mathbf{j}} = a_{\mathbf{i}\mathbf{j}},\]
    with $\psi(\mathbf{i}) := 1+n^{k-\ell}\sum\limits_{\ell=1}^k(i_{\ell}-1)$ for any $\mathbf{i}\in [n]^{\times k}$. In particular, we omit the second argument of $\psi$ because in this special case it is not needed to prevent ambiguity. 
\end{remark}

\begin{proposition}\label{Matricized Hermitian Form}
    Let $\mathcal{A}\in \mathbb{C}^{n\times\dots\times n}$ be an order $2k$ cubical hypermatrix and $x = \left[\begin{array}{ccc}
        x_1 & \dots & x_n
    \end{array}\right]$ be a vector of complex variables. Then  
    \[\langle x^{\otimes k},M_{cb}(\mathcal{A})x^{\otimes k}\rangle = \mathcal{A}*(\overline{x},\dots,\overline{x},x,\dots,x),\]
    with "$\otimes$" denoting the Kronecker product and $\langle\text{ , }\rangle$ denoting the formal Frobenius inner product. Consequently, for any degree $k$ Hermitian form $h$ in the complex variables $x_1,\dots,x_n$, there is a unique order $2k$ conjugate partially symmetric hypermatrix $\mathcal{A}\in \mathbb{C}^{n\times\dots\times n}$ such that 
    \[h(x_1,\dots,x_n) = \langle x^{\otimes k},M_{cb}(\mathcal{A})x^{\otimes k}\rangle.\]
\end{proposition}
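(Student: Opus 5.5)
The identity is a direct index computation, and the second assertion then follows from Fact \ref{Hermitian Form Bijection}. The key observation concerns the Kronecker power: the entry of $x^{\otimes k}$ in position $\psi(\mathbf{j})$ is the monomial $x_{j_1}x_{j_2}\cdots x_{j_k}$, for every multi-index $\mathbf{j}=(j_1,\dots,j_k)\in[n]^{\times k}$. I would prove this first, by induction on $k$, from the definition of the Kronecker product $(u\otimes v)_{(a-1)n^{k-1}+b}=u_av_b$. The case $k=1$ is trivial. For the inductive step, write $x^{\otimes k}=x\otimes x^{\otimes(k-1)}$ and check that
\[
(j_1-1)n^{k-1}+\psi(j_2,\dots,j_k) = \psi(j_1,\dots,j_k),
\]
which holds because $\psi$ is the lexicographic (big-endian, first index most significant) enumeration $1+\sum_{\ell=1}^k(i_\ell-1)n^{k-\ell}$. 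I expect this index bookkeeping to be the only real obstacle. The displayed formula for $\psi$ in the remark after Definition \ref{Matricization} is typographically garbled: the sum should sit outside $n^{k-\ell}$. So I would first state precisely that $\psi$ is a bijection $[n]^{\times k}\to[n^k]$ with this lexicographic order, since it is the mixed-radix base-$n$ representation. The essential point is that the same ordering is used both for the rows and columns of $M_{cb}(\mathcal{A})$ and for the Kronecker power.

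With this in hand, I expand the formal inner product. Here $\langle u,v\rangle$ must be read as the sesquilinear pairing $u^{\dagger}v=\sum_a\overline{u_a}v_a$ of formal vectors, as in the matrix representation \eqref{Hermitian forms - matrix expression}. This conjugation is needed for the statement to hold, even though the notation section defines the Frobenius product without it, so I would remark on this convention explicitly. Since $\overline{x^{\otimes k}}=\overline{x}^{\otimes k}$ entrywise, summing over row index $a=\psi(\mathbf{i})$ and column index $b=\psi(\mathbf{j})$ and using bijectivity of $\psi$ gives
\[
\langle x^{\otimes k},M_{cb}(\mathcal{A})x^{\otimes k}\rangle=\sum_{\mathbf{i},\mathbf{j}\in[n]^{\times k}}\overline{x_{i_1}}\cdots\overline{x_{i_k}}\;a_{\mathbf{i}\mathbf{j}}\;x_{j_1}\cdots x_{j_k}.
\]
By Definition \ref{multilinear-matrix-multiplication}, with $x^1=\dots=x^k=\overline{x}$ and $x^{k+1}=\dots=x^{2k}=x$, this sum is exactly $\mathcal{A}*(\overline{x},\dots,\overline{x},x,\dots,x)$.

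For the consequence, let $h$ be a degree $k$ Hermitian form. Fact \ref{Hermitian Form Bijection} gives a unique order $2k$ conjugate partially symmetric $\mathcal{A}$ with $h=\mathcal{A}*(\overline{x},\dots,\overline{x},x,\dots,x)$. Substituting the identity just proved yields $h=\langle x^{\otimes k},M_{cb}(\mathcal{A})x^{\otimes k}\rangle$.

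Uniqueness is inherited from the Fact. Suppose two conjugate partially symmetric hypermatrices $\mathcal{A}$ and $\mathcal{A}'$ both represent $h$ in this inner-product form. By the identity, they also give the same multilinear product, and the injectivity in Fact \ref{Hermitian Form Bijection} forces $\mathcal{A}=\mathcal{A}'$. Uniqueness therefore holds within the class of conjugate partially symmetric hypermatrices. It does not hold among arbitrary matrices, since $M_{cb}$ of a non-symmetric tensor can represent the same polynomial. I would point this out to explain why the restriction to that class is needed.
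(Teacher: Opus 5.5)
Your proposal is correct and follows essentially the same route as the paper: expand the pairing entrywise over rows $\psi(\mathbf{i})$ and columns $\psi(\mathbf{j})$, recognize the result as the multilinear product $\mathcal{A}*(\overline{x},\dots,\overline{x},x,\dots,x)$, and invoke the bijection of Fact \ref{Hermitian Form Bijection} for the consequence. Your additions fill in steps the paper leaves implicit without changing the argument: the induction showing $(x^{\otimes k})_{\psi(\mathbf{j})}=x_{j_1}\cdots x_{j_k}$, the explicit conjugate-linear reading of $\langle\cdot,\cdot\rangle$ (which the paper's proof uses tacitly through $(x^{\otimes k})^{\dagger}$), and the uniqueness argument via injectivity of that bijection.
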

\begin{proof}
The first equality follows from the straightforward calculation:
\begin{align*}
    \langle x^{\otimes k},M_{cb}(\mathcal{A})x^{\otimes k}\rangle &= \sum\limits_{\substack{\mathbf{i}\in [n]^{\times k} \\
        \mathbf{j}\in [n]^{\times k}}}\big((x^{\otimes k})^{\dagger}\big)_{\psi(\mathbf{i})}\big(M_{cb}(\mathcal{A})\big)_{\psi(\mathbf{i}),\psi(\mathbf{j})}(x^{\otimes k})_{\psi(\mathbf{j})} \\
    &= \sum\limits_{\substack{\mathbf{i}\in [n]^{\times k} \\ \mathbf{j}\in [n]^{\times k}}}a_{\mathbf{i}\mathbf{j}}(\overline{x}^{\otimes k})_{\psi(\mathbf{i})}(x^{\otimes k})_{\psi(\mathbf{j})} \\
    &= \sum\limits_{\substack{i_1,...,i_k=1 \\ j_1,...,j_k=1}}^na_{i_1...i_kj_1...j_k}\overline{x}_{i_1}\dots\overline{z}_{i_k}x_{j_1}...x_{j_k} \\
    &= \mathcal{A}*(\overline{x},\dots,\overline{x},x,\dots,x).
\end{align*}
Consequently, if $\mathcal{A}$ is assumed to be conjugate partially symmetric, then $\mathcal{A}*(\overline{x},\dots,\overline{x},x,\dots,x) = \langle x^{\otimes k},M_{cb}(\mathcal{A})x^{\otimes k}\rangle$ uniquely corresponds to some degree $k$ Hermitian form $h(x_1,\dots,x_n)$. 
\end{proof}
With this proposition, we obtain an immediate sufficient condition for Hermitian positive semidefiniteness. 
\begin{fact}\label{Sufficient Condition for Hermitian Positive Semidefiniteness}
    Let $\mathcal{A}\in \mathbb{C}^{n\times\dots\times n}$ be an order $2k$ conjugate partially symmetric hypermatrix. If $M_{cb}(\mathcal{A})$ is Hermitian positive semidefinite, then $\mathcal{A}$ is Hermitian positive semidefinite. 
\end{fact}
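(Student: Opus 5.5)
The plan is to evaluate the identity of Proposition \ref{Matricized Hermitian Form} at a concrete point and then apply the definition of positive semidefiniteness of the matrix $M_{cb}(\mathcal{A})$. Fix a nonzero $z\in \mathbb{C}^n$ and apply the evaluation map $\mathrm{Ev}_z$ to both sides of the first equality in Proposition \ref{Matricized Hermitian Form}. Since evaluation is a ring homomorphism sending $x\mapsto z$ and $\overline{x}\mapsto \overline{z}$, and both sides are polynomial expressions in the $x_i,\overline{x}_i$, this gives
\[
\mathcal{A}*(\overline{z},\dots,\overline{z},z,\dots,z) = (z^{\otimes k})^{\dagger}M_{cb}(\mathcal{A})\,z^{\otimes k}.
\]
Here the formal inner product $\langle x^{\otimes k},\cdot\rangle$ specializes to the standard sesquilinear inner product, whose first argument is conjugated. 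This convention is already built into the computation in the proof of Proposition \ref{Matricized Hermitian Form}, where $(x^{\otimes k})^{\dagger}$ appears.

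Next I would record two facts about $z^{\otimes k}$.
\begin{itemize}
\item It is nonzero whenever $z\neq 0$: if $z_i\neq 0$, then the entry of $z^{\otimes k}$ indexed by $\psi(i,\dots,i)$ equals $z_i^k\neq 0$.
\item $M_{cb}(\mathcal{A})$ is Hermitian, because conjugate partial symmetry with $\sigma=\tau=\mathrm{id}$ gives $a_{\mathbf{i}\mathbf{j}}=\overline{a_{\mathbf{j}\mathbf{i}}}$.
\end{itemize}
The second point is not even needed for the semidefinite conclusion. The hypothesis that $M_{cb}(\mathcal{A})$ is Hermitian positive semidefinite says exactly that $w^{\dagger}M_{cb}(\mathcal{A})w\geq 0$ for all $w\in\mathbb{C}^{n^k}$. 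Taking $w=z^{\otimes k}$ then yields $\mathcal{A}*(\overline{z},\dots,\overline{z},z,\dots,z)\geq 0$. Since $z$ was arbitrary, this is condition \eqref{Heermitian positive definite tensor} in its semidefinite form, so $\mathcal{A}$ is Hermitian positive semidefinite.

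There is no real obstacle here. The only point needing care is justifying that the formal identity of Proposition \ref{Matricized Hermitian Form} survives evaluation with the correct placement of conjugates. I would handle this by rewriting the middle line of that proof with $z$ in place of $x$ directly. The same argument with strict inequality shows that positive definiteness of $M_{cb}(\mathcal{A})$ gives Hermitian positive definiteness of $\mathcal{A}$, since $z^{\otimes k}\neq 0$. The converse fails, because the vectors $z^{\otimes k}$ only fill out the symmetric rank-one locus and not all of $\mathbb{C}^{n^k}$; this is why the statement is only a sufficient condition.
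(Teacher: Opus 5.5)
Your proof is correct and follows the paper's approach. The paper presents this fact as an immediate consequence of Proposition \ref{Matricized Hermitian Form}: evaluate $\langle x^{\otimes k},M_{cb}(\mathcal{A})x^{\otimes k}\rangle=\mathcal{A}*(\overline{x},\dots,\overline{x},x,\dots,x)$ at $z$ and apply positive semidefiniteness of $M_{cb}(\mathcal{A})$ to the vector $z^{\otimes k}$, exactly as you do. Your side remark on the strict version is true but vacuous for $k\geq 2$, because Proposition \ref{Cubically Balanced Matricizations Have Non-trivial Kernel} shows $M_{cb}(\mathcal{A})$ then always has a nontrivial kernel; this is why the paper passes to $M_{sym}(\mathcal{A})$ to obtain a definite criterion.
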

A few remarks are in order. 
\begin{remark}
    First, note that the converse does not hold. For example, consider the diagonal hypermatrix $\mathcal{I}\in \mathbb{C}^{2\times 2\times 2\times 2}$ with a $1$ in its $(1,1,1,1)$ and $(2,2,2,2)$ entries, and $0$ for all its other entries (that is, $\mathcal{I}$ is the order $4$ analogue of the identity matrix), and consider the hypermatrix $\mathcal{J}\in \mathbb{C}^{2\times 2\times 2\times 2}$ with a $1$ in its $(1,2,1,2)$, $(1,2,2,1)$, $(2,1,2,1)$, and $(2,1,1,2)$ entries, and $0$ for all its other entries (that is, $\mathcal{J}$ is the order $4$ analogue of the $2\times 2$ exchange matrix). One can check that $\mathcal{I}-\frac{1}{4}\mathcal{J}$ is conjugate partially symmetric, and furthermore that 
    \[h_{\mathcal{I}-\frac{1}{4}\mathcal{J}}(x_1,x_2) = \left(\mathcal{I}-\frac{1}{4}\mathcal{J}\right)*(\overline{x},\overline{x},x,x) = |x_1|^4+|x_2|^4-|x_1|^2|x_2|^2.\]
    Setting $a := |x_1|^2$ and $b := |x_2|^2$, it follows that $|x_1|^4+|x_2|^4-|x_1|^2|x_2|^2 = (a-b)^2+ab>0$ for all $(a,b)\neq (0,0)$, hence $\mathcal{I}-\frac{1}{4}\mathcal{J}$ is Hermitian positive definite (and thus trivially Hermitian positive semidefinite). However, the cubically balanced matricization of $\mathcal{I}-\frac{1}{4}\mathcal{J}$ is given by 
    \[M_{cb}\left(\mathcal{I}-\frac{1}{4}\mathcal{J}\right) = \left[\begin{array}{cccc}
        1 & 0 & 0 & 0 \\
        0 & -\frac{1}{4} & -\frac{1}{4} & 0 \\
        0 & -\frac{1}{4} & -\frac{1}{4} & 0 \\
        0 & 0 & 0 & 1
    \end{array}\right]\]
    which has an eigenvalues $1$, $1$, $0$, and $-\frac{1}{2}$. Thus, $M_{cb}\left(\mathcal{I}-\frac{1}{4}\mathcal{J}\right)$ is not Hermitian positive semidefinite. The deeper reason for this phenomenon is that in general tensor eigenvalues do not coincide with the matrix eigenvalues of corresponding tensor unfoldings. Thus, while the tensor eigenvalues of $\mathcal{I}-\frac{1}{4}\mathcal{J}$ are all strictly positive by the Spectral Theorem of Hermitian Forms, the matrix eigenvalues of its cubically balanced matricization are not all positive (and can in fact be negative). 
\end{remark} 
\begin{remark}
    Another salient point is that Fact \ref{Sufficient Condition for Hermitian Positive Semidefiniteness} can only be stated in terms of Hermitian positive semidefiniteness because in general the cubically balanced matricization of a conjugate partially symmetric hypermatrix will have non-trivial kernel. As observed above, $M_{cb}\left(\mathcal{I}-\frac{1}{4}\mathcal{J}\right)$ has a $0$ eigenvalue, and in general this is unavoidable due to the higher order symmetries of conjugate partially symmetric hypermatrices (atleast when the order is greater than $2$, for otherwise we are dealing with a regular Hermitian matrix). We formalize this observation below with the following proposition. 
\end{remark}
\begin{proposition}\label{Cubically Balanced Matricizations Have Non-trivial Kernel}
    Let $\mathcal{A}\in\mathbb{C}^{n\times\dots\times n}$ be an order $2k$ conjugate partially symmetric hypermatrix with $k\geq 2$. Then $\ker\big({M_{cb}(\mathcal{A})}\big)\supseteq \mathrm{Sym}^k(\mathbb{C}^n)^{\perp}\neq \{0\}$. 
\end{proposition}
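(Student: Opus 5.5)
We work inside $(\mathbb{C}^n)^{\otimes k}\cong\mathbb{C}^{n^k}$, using the Kronecker identification fixed by $\psi$, so that the standard basis vector $e_{\psi(\mathbf{j})}$ is identified with $e_{j_1}\otimes\dots\otimes e_{j_k}$. For $\tau\in S_k$, let $P_\tau$ be the permutation operator
\[
P_\tau(v_1\otimes\dots\otimes v_k)=v_{\tau(1)}\otimes\dots\otimes v_{\tau(k)} .
\]
In coordinates this is the permutation matrix sending $e_{\psi(\mathbf{j})}$ to $e_{\psi(\mathbf{j}\circ\tau)}$. We take $\mathrm{Sym}^k(\mathbb{C}^n)$ to be the subspace of vectors fixed by every $P_\tau$. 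Its orthogonal complement is taken with respect to the standard Hermitian inner product on $\mathbb{C}^{n^k}$.

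The first step is to translate the symmetry of $\mathcal{A}$ into matrix identities. Using only the symmetry in the second block of indices, namely
\[
a_{\mathbf{i}\,\mathbf{j}\circ\tau}=a_{\mathbf{i}\mathbf{j}},
\]
we read off $M_{cb}(\mathcal{A})P_\tau=M_{cb}(\mathcal{A})$ for every $\tau\in S_k$. This identity follows from conjugate partial symmetry: apply the defining relation twice, once with $\sigma=\mathrm{id}$ and once with both permutations trivial, and the conjugations cancel. Averaging over $S_k$ then gives $M_{cb}(\mathcal{A})=M_{cb}(\mathcal{A})\Pi$, where
\[
\Pi:=\frac{1}{k!}\sum_{\tau}P_\tau .
\]
Each $P_\tau$ is unitary and $P_\tau^{\dagger}=P_{\tau^{-1}}$, so $\Pi$ is a Hermitian idempotent. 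Its range is exactly $\mathrm{Sym}^k(\mathbb{C}^n)$: vectors fixed by all $P_\tau$ are fixed by $\Pi$, and conversely $P_\sigma\Pi=\Pi$ for every $\sigma$. Hence $\Pi$ is the orthogonal projection onto $\mathrm{Sym}^k(\mathbb{C}^n)$. For $w\in\mathrm{Sym}^k(\mathbb{C}^n)^{\perp}$ we get $\Pi w=0$, so $M_{cb}(\mathcal{A})w=M_{cb}(\mathcal{A})\Pi w=0$. This proves the containment of the perpendicular space in the kernel.

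It remains to show that $\mathrm{Sym}^k(\mathbb{C}^n)^{\perp}\neq\{0\}$ when $k\ge 2$. This requires $n\ge 2$; for $n=1$ the symmetric space is all of $\mathbb{C}$ and the claim fails, so we assume $n\geq 2$, which is implicit in the statement. Take the vector
\[
w=e_1\otimes e_2\otimes e_1^{\otimes(k-2)}-e_2\otimes e_1\otimes e_1^{\otimes(k-2)} .
\]
It is nonzero, since its two terms are distinct standard basis vectors. It also satisfies $P_{(12)}w=-w$, so
\[
\Pi w=\frac{1}{k!}\sum_{\tau}P_{\tau}w=\frac{1}{k!}\sum_{\tau}P_{\tau(12)}\,P_{(12)}w=-\Pi w
\]
(reindexing the sum by $\tau\mapsto\tau(12)$), which forces $\Pi w=0$. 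Hence $w\in\mathrm{Sym}^k(\mathbb{C}^n)^{\perp}$. Alternatively, a dimension count suffices: $\dim\mathrm{Sym}^k(\mathbb{C}^n)=\binom{n+k-1}{k}<n^k$ when $n,k\ge2$.

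The main obstacle is bookkeeping rather than substance. We need to check that permuting the second multi-index corresponds, under $\psi$, to right multiplication by the same permutation matrix $P_\tau$, with the correct action on tensor positions, so that the action is consistent. We also need to note the restriction $n\ge2$ explicitly.
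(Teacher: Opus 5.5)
Your proof is correct and is essentially the paper's argument. The paper uses symmetry in the first index block to show that every column of $M_{cb}(\mathcal{A})$ lies in $\mathrm{Sym}^k(\mathbb{C}^n)$, then uses Hermitian-ness of $M_{cb}(\mathcal{A})$ to conclude that it kills $\mathrm{Sym}^k(\mathbb{C}^n)^{\perp}$; you use second-block symmetry to get $M_{cb}(\mathcal{A})=M_{cb}(\mathcal{A})\Pi$ directly and skip the adjoint step, and your caveat that $n\ge 2$ is needed is right and is missed by the paper, whose count $n^k-\binom{n+k-1}{k}$ equals $0$ when $n=1$.
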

\begin{proof}
    Let $\mathbf{i} := (i_1,\dots,i_k)\in [n]^{\times k}$ and $\mathbf{j} := (j_1,\dots,j_k)\in [n]^k$. conjugate partial symmetry of $\mathcal{A}$ implies that 
    \begin{equation}\label{application of Hermitian partial symmetry}
        a_{\mathbf{i},\mathbf{j}} = \overline{a_{\mathbf{j},\sigma\cdot \mathbf{i}}}\qquad \forall \sigma\in S_k,
    \end{equation}
    where $\sigma\cdot \mathbf{i} := (i_{\sigma^{-1}(1)},\dots,i_{\sigma^{-1}(k)})$, hence again by conjugate partial symmetry we have that 
    \[a_{\mathbf{i}\mathbf{j}} = a_{\sigma\cdot \mathbf{i},\mathbf{j}}\qquad \forall \sigma\in S_k.\]
    Therefore,
    \[\big(M_{cb}(\mathcal{A})\big)_{\mathbf{i},\mathbf{j}} = \big(M_{cb}(\mathcal{A})\big)_{\psi(\mathbf{j}),\psi(\sigma\cdot \mathbf{i})},\]
    implying that every column of $M_{cb}$ is a vector in $\mathrm{Sym}^k(\mathbb{C}^{n})$, hence the column space of $M_{cb}$ is contained in $\mathrm{Sym}^k(\mathbb{C}^{n})$. Therefore, because $M_{cb}(\mathcal{A})$ is Hermitian (this is immediately implied by equation \eqref{application of Hermitian partial symmetry} by setting $\sigma$ to be the identity permutation), for every $v\in (\mathbb{C}^n)^{\otimes k}$ and $w\in \mathrm{Sym}^k(\mathbb{C}^n)^{\perp}$, we have 
    \[\langle v,M_{cb}(\mathcal{A})w\rangle = \langle \overbrace{M_{cb}(\mathcal{A})v}^{\mathrm{Sym}^k(\mathbb{C}^n)},w\rangle = 0.\]
    Thus, $\ker\big(M_{cb}(\mathcal{A})\big) \supseteq \mathrm{Sym}^k(\mathbb{C}^n)^{\perp}$, which is a non-trivial subspace of dimension $n^k - \binom{n+k-1}{k} \geq 1$ whenever $k\geq 2$. 
\end{proof}
Thus, in order to guarantee positivity of higher degree Hermitian forms we will need to take this partially symmetry condition into account in our tensor unfolding protocol. This naturally leads us to the notion of (normalized) symmetric matricizations, which we formally define in the next subsection. 

\subsubsection{Symmetric Matricizations}
For each $k\geq 2$, let $\mathcal{I}_k := \{(i_1,\dots,i_k)\in [n]^{\times k}:1\leq i_1\leq\dots\leq i_k\leq n\}$ be ordered lexicographically, from least to greatest; that is, the first element of $\mathcal{I}_k$ is $(1,\dots,1,1)$, the second is $(1,1,\dots,1,2)$, the $n^{th}$ element is $(1,1,\dots,n)$, the $(n+1)^{th}$ element is $(1,\dots,2,2)$, and so on with the last element being $(n,n,\dots,n)$. 
\begin{fact}\label{Position in lexicographic order}
    The position of the tuple $\mathbf{i} = (i_1,\dots,i_k)\in \mathcal{I}_k$ is given by 
    \[\varphi(\mathbf{i}) = 1+\sum\limits_{q=1}^k\binom{i_q+q-2}{q}.\]
\end{fact}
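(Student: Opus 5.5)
The plan is a direct counting argument: $\varphi(\mathbf{i})-1$ should be the number of tuples in $\mathcal{I}_k$ that precede $\mathbf{i}$, and I will split these predecessors according to the \emph{most significant} coordinate at which they first differ from $\mathbf{i}$.

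Before doing so I test the formula on small cases to fix which coordinate is most significant. For $k=2$, $n=3$ the formula gives $\varphi(1,3)=1+0+\binom{3}{2}=4$ and $\varphi(2,2)=1+\binom{1}{1}+\binom{2}{2}=3$. In the ordinary lexicographic order (with $i_1$ most significant) the positions are $3$ and $4$ instead. So the formula as written matches the \emph{colexicographic} order, in which tuples are compared first by $i_k$, then $i_{k-1}$, and so on. That order is $(1,1),(1,2),(2,2),(1,3),(2,3),(3,3)$ for $k=2$, $n=3$. I will therefore prove Fact \ref{Position in lexicographic order} for the colexicographic ordering of $\mathcal{I}_k$, and I will note that the description of $\mathcal{I}_k$ (in particular ``the $n^{th}$ element is $(1,\dots,1,n)$'') must be read accordingly, or the formula adjusted for the literal lexicographic order.

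For the colexicographic order, fix $\mathbf{i}=(i_1,\dots,i_k)\in\mathcal{I}_k$. Let $\mathbf{j}\in\mathcal{I}_k$ precede $\mathbf{i}$. Then there is a unique $q\in[k]$ such that $j_r=i_r$ for all $r>q$ and $j_q<i_q$. I partition the predecessors by this $q$ and count each class.

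\begin{itemize}
\item For a given $q$, the tail $(j_{q+1},\dots,j_k)$ is forced to equal $(i_{q+1},\dots,i_k)$.
\item The head $(j_1,\dots,j_q)$ can be any nondecreasing tuple with entries in $[i_q-1]$. The monotonicity constraint $j_q\le j_{q+1}=i_{q+1}$ is automatic, because $j_q<i_q\le i_{q+1}$.
\item Nondecreasing $q$-tuples from $[m]$ are exactly multisets of size $q$ from $[m]$, so there are $\binom{m+q-1}{q}$ of them. With $m=i_q-1$ this gives $\binom{i_q+q-2}{q}$.
\item When $i_q=1$ this count is $\binom{q-1}{q}=0$, which is correct since no head is then possible.
\end{itemize}

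Summing over $q$ and adding $1$ for $\mathbf{i}$ itself yields $\varphi(\mathbf{i})$.

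The main obstacle is not the computation but the ordering convention, which I have to settle first. After that, the only points needing care are that the classes for different $q$ are disjoint and exhaustive, and the automatic-monotonicity check at the junction index $q$.
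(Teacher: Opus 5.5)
Your reading of the ordering is right, and it exposes an error in the paper's own proof. The paper does not count predecessors directly. It introduces the involution $\mathbf{i}\mapsto\widetilde{\mathbf{i}}=(n-i_k+1,\dots,n-i_1+1)$ and counts the lexicographic \emph{successors} of $\mathbf{i}$ by splitting on the first coordinate $l$ where they differ. This gives $\sum_{l=1}^k\binom{n+k-l-i_l}{k-l+1}$, which is a correct count. It then claims that $\mathbf{i}<\mathbf{j}\iff\widetilde{\mathbf{j}}<\widetilde{\mathbf{i}}$ in the lexicographic order, uses this to turn the successor count into a predecessor count, and reindexes by $q=k-l+1$ to reach $\sum_q\binom{j_q+q-2}{q}$.

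That claim is false. The map $\mathbf{i}\mapsto\widetilde{\mathbf{i}}$ reverses the coordinates as well as complementing them, so it sends lexicographic order to \emph{reversed colexicographic} order. Your example already refutes it: for $n=3$ both $(1,3)$ and $(2,2)$ are fixed by the map, so the claimed equivalence would give $(1,3)<(2,2)\iff(2,2)<(1,3)$. Read correctly, the paper's computation proves exactly what you prove, namely that $\varphi$ is the colexicographic rank. Its intermediate successor count also supplies the ``adjusted formula'' you mention for the order literally described before the fact: $\binom{n+k-1}{k}-\sum_{l=1}^k\binom{n+k-l-i_l}{k-l+1}$.

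Your route is more direct than the paper's. Splitting colexicographic predecessors by the \emph{last} differing coordinate leaves a head bounded only from above, $j_1\le\dots\le j_q\le i_q-1$. So each class is a single multiset count, and no involution is needed. In the lexicographic analogue, the split entry is bounded below by $i_{l-1}$ (with $i_0:=1$), and the tail is bounded below by the split entry. Those classes therefore do not reduce to single binomials, which explains why the clean one-binomial-per-coordinate formula is naturally a colex rank. The paper's detour through successors and $\widetilde{\mathbf{i}}$ buys nothing beyond, incidentally, the lex formula above.

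Your checks of disjointness, exhaustiveness, and monotonicity at the junction index are exactly the points needing care, and you handle them. Finally, nothing later in the paper depends on which order is used. $\varphi$ serves only as an indexing bijection $\mathcal{I}_k\to\bigl[\binom{n+k-1}{k}\bigr]$ for $M_{sym}$ and $U$, and your argument establishes that bijectivity directly.
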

See Appendix Section A for verification of the above fact. 
\begin{defn}
    Let $\mathbf{i}\in \mathcal{I}_k$ and denote $\eta(\mathbf{i}) := |\mathrm{Orb}_{S_k}(\mathbf{i})|$ to be the cardinality of the orbit of $\mathbf{i}$; or in other words, $\eta(\mathbf{i})$ is the number of distinct permutations of $\mathbf{i}$. Furthermore, let $\mathcal{A}\in \mathbb{C}^{n\times\dots\times n}$ be an order $2k$ Hermitian partially symmetric hypermatrix. We define the \textbf{(normalized) symmetric matricization} of $\mathcal{A}$, denoted $M_{sym}(\mathcal{A})$, to be the $\binom{n+k-1}{k}\times \binom{n+k-1}{k}$ matrix such that 
    \[\big(M_{sym}\big)_{\varphi(\mathbf{i}),\varphi(\mathbf{j})} = \sqrt{\eta(\mathbf{i})\eta(\mathbf{j})}a_{\mathbf{i}\mathbf{j}}\qquad \forall \mathbf{i},\mathbf{j}\in\mathcal{I}_k.\]
\end{defn}
\begin{fact}\label{Symmetric Matricization Embedding}
    Let $\mathcal{A}\in \mathbb{C}^{n\times\dots\times n}$ be an order $2k$ conjugate partially symmetric hypermatrix. Then $M_{sym}(\mathcal{A})$ satisfies
    \[UM_{sym}(\mathcal{A})U^T = M_{cb}(\mathcal{A}),\]
    where $U$ is the real $n^k\times \binom{n+k-1}{k}$ matrix representing the isometric embedding $\mathrm{Sym}^k(\mathbb{C}^n)\hookrightarrow (\mathbb{C}^{n})^{\otimes k}$ given by 
    \[e_{\varphi(\mathbf{i})}\mapsto \frac{\sqrt{\eta(\mathbf{i})}}{k!}\sum\limits_{\sigma\in S_k}e_{\sigma(i_1)}\otimes\dots\otimes e_{\sigma(i_k)}.\]
    Consequently, we also have that 
    \[U^TM_{cb}(\mathcal{A})U = M_{sym}(\mathcal{A}),\]
    from which it follows that $M_{sym}(\mathcal{A})$ is a Hermitian matrix.  
\end{fact}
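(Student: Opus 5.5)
The plan is to make the embedding $U$ explicit column by column, verify that it is an isometry, and then compare entries of $UM_{sym}(\mathcal{A})U^T$ and $M_{cb}(\mathcal{A})$ directly. The other two claims then follow formally. Throughout I read the defining formula as $\sum_{\sigma\in S_k}e_{i_{\sigma(1)}}\otimes\dots\otimes e_{i_{\sigma(k)}}$, i.e. $\sigma$ permutes the positions of the tuple $\mathbf{i}$.

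\textbf{Step 1: the columns of $U$.} Fix $\mathbf{i}\in\mathcal{I}_k$. As $\sigma$ ranges over $S_k$, the permuted tuple $(i_{\sigma(1)},\dots,i_{\sigma(k)})$ runs over $\mathrm{Orb}_{S_k}(\mathbf{i})$. Each element of the orbit is hit exactly $k!/\eta(\mathbf{i})$ times, since that is the size of the stabilizer. Hence the $\varphi(\mathbf{i})$-th column of $U$ is
\[Ue_{\varphi(\mathbf{i})}=\frac{1}{\sqrt{\eta(\mathbf{i})}}\sum_{\mathbf{a}\in\mathrm{Orb}_{S_k}(\mathbf{i})}e_{\psi(\mathbf{a})}.\]
Concretely, $U_{\psi(\mathbf{a}),\varphi(\mathbf{i})}=\eta(\mathbf{i})^{-1/2}$ if $\mathbf{a}$ is a rearrangement of $\mathbf{i}$, and $0$ otherwise. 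Distinct sorted tuples have disjoint orbits, so the columns have disjoint supports. Each column has norm $\big(\eta(\mathbf{i})\cdot\eta(\mathbf{i})^{-1}\big)^{1/2}=1$. Therefore $U$ is real with $U^TU=I$, i.e. it is an isometric embedding onto $\mathrm{Sym}^k(\mathbb{C}^n)$.

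\textbf{Step 2: the entrywise identity.} Take arbitrary $\mathbf{a},\mathbf{b}\in[n]^{\times k}$, and let $\mathbf{i},\mathbf{j}\in\mathcal{I}_k$ be their sorted rearrangements. By Step 1, only the column indices $\varphi(\mathbf{i})$ and $\varphi(\mathbf{j})$ contribute, so
\[\big(UM_{sym}(\mathcal{A})U^T\big)_{\psi(\mathbf{a}),\psi(\mathbf{b})}=\frac{1}{\sqrt{\eta(\mathbf{i})}}\sqrt{\eta(\mathbf{i})\eta(\mathbf{j})}\,a_{\mathbf{i}\mathbf{j}}\frac{1}{\sqrt{\eta(\mathbf{j})}}=a_{\mathbf{i}\mathbf{j}}.\]
It remains to check that $a_{\mathbf{i}\mathbf{j}}=a_{\mathbf{a}\mathbf{b}}$. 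This is exactly the separate permutation invariance $a_{\sigma\cdot\mathbf{i},\tau\cdot\mathbf{j}}=a_{\mathbf{i}\mathbf{j}}$. It follows by applying the conjugate partial symmetry relation twice, as in the proof of Proposition \ref{Cubically Balanced Matricizations Have Non-trivial Kernel}: the first application permutes one block and conjugates, and the second permutes the other block and conjugates back. I expect this bookkeeping, namely the stabilizer count in Step 1 together with the invariance used here, to be the only real content; the rest is formal.

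\textbf{Step 3: the remaining claims.} Multiply the identity $UM_{sym}(\mathcal{A})U^T=M_{cb}(\mathcal{A})$ on the left by $U^T$ and on the right by $U$. Using $U^TU=I$ gives $U^TM_{cb}(\mathcal{A})U=M_{sym}(\mathcal{A})$. Next, $M_{cb}(\mathcal{A})$ is Hermitian by the $\sigma=\mathrm{id}$ case of \eqref{application of Hermitian partial symmetry}. Since $U$ is real, it follows that
\[M_{sym}(\mathcal{A})^\dagger=U^TM_{cb}(\mathcal{A})^\dagger U=U^TM_{cb}(\mathcal{A})U=M_{sym}(\mathcal{A}),\]
so $M_{sym}(\mathcal{A})$ is Hermitian.
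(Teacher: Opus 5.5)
Your proof is correct and follows essentially the same route as the paper's. Orbit--stabilizer counting identifies the columns of $U$ as normalized orbit sums, which gives the isometry. Then $UM_{sym}(\mathcal{A})U^T$ is matched with $M_{cb}(\mathcal{A})$ using separate permutation invariance of the entries, and $U^TU=I$ together with realness of $U$ yields the remaining claims. The differences are cosmetic: you compare entries rather than expanding into outer products of columns, and you state explicitly the invariance $a_{\mathbf{a}\mathbf{b}}=a_{\mathbf{i}\mathbf{j}}$, which the paper uses silently when regrouping orbit sums into the full sum over $[n]^{\times k}$.
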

See Appendix Section A for verification of the above fact. 
\begin{example}
    Let $\mathcal{A}\in \mathbb{C}^{2\times 2\times 2\times 2}$ be the conjugate partially symmetric hypermatrix given by 
    \begin{align*}
        a_{1111} = 1 & & a_{1212} = 2 & & a_{2222} = 3 \\
        a_{1112} = 1+i & & a_{1122} = 2+i & & a_{1222} = 1+2i.
    \end{align*}
    The cubically balanced matricization of $\mathcal{A}$ is given by 
    \[M_{cb}(\mathcal{A}) = \left[\begin{array}{cccc}
        1 & 1+i & 1+i & 2+i \\
        1-i & 2 & 2 & 1+2i \\
        1-i & 2 & 2 & 1+2i \\
        2-i & 1-2i & 1-2i & 3
    \end{array}\right].\]
    The isometry $U$ is given by 
    \[U = \left[\begin{array}{ccc}
        1 & 0 & 0 \\
        0 & \frac{1}{\sqrt{2}} & 0 \\
        0 & \frac{1}{\sqrt{2}} & 0 \\
        0 & 0 & 1
    \end{array}\right]\]
    and it follows (either by definition or direct calculation) that 
    \[M_{sym}(\mathcal{A}) = \left[\begin{array}{ccc}
        1 & \sqrt{2}(1+i) & 2+i \\
        \sqrt{2}(1-i) & 4 & \sqrt{2}(1+2i) \\
        2-i & \sqrt{2}(1-2i) & 3
    \end{array}\right].\]
\end{example}
In general, symmetric matricizations of conjugate partially symmetric hypermatrices can have trivial kernel and in fact be Hermitian positive definite; below, we characterize precisely when they are guaranteed to be Hermitian positive definite. 
\begin{theorem}[Characterizing Positive Definiteness of Symmetric Matricizations]\label{Characterizing Positive Definiteness of Symmetric Matricizations}
    Let $\mathcal{A}\in \mathbb{C}^{n\times\dots\times n}$ be an order $2k$ conjugate partially symmetric hypermatrix. Then $M_{sym}(\mathcal{A})$ is Hermitian positive semidefinite if and only if $M_{cb}(\mathcal{A})$ is Hermitian positive semidefinite. Moreover, the following are equivalent:
    \begin{itemize}
        \item $M_{sym}(\mathcal{A})$ is Hermitian positive definite;
        \item $M_{cb}(\mathcal{A})$ is Hermitian positive semidefinite and $\ker\big(M_{cb}(\mathcal{A})\big) \cong \mathrm{Sym}^k(\mathbb{C}^n)^{\perp}$;
        \item $M_{cb}(\mathcal{A})$ is Hermitian positive semidefinite and $\mathrm{rank}\big(M_{cb}(\mathcal{A})\big) = \binom{n+k-1}{k}$. 
    \end{itemize}
\end{theorem}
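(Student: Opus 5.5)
The proof rests on Fact \ref{Symmetric Matricization Embedding}, namely the identities $M_{cb}(\mathcal{A}) = UM_{sym}(\mathcal{A})U^T$ and $M_{sym}(\mathcal{A}) = U^TM_{cb}(\mathcal{A})U$. We add two observations. First, $U$ is a real isometry, so $U^TU = I$. Second, the range of $U$ is exactly $\mathrm{Sym}^k(\mathbb{C}^n)$, since its columns are orthonormal symmetrized basis tensors, one for each orbit $\mathbf{i}\in\mathcal{I}_k$. Hence $UU^T$ is the orthogonal projection $P$ onto $\mathrm{Sym}^k(\mathbb{C}^n)$. In words, $M_{cb}(\mathcal{A})$ is unitarily equivalent to $M_{sym}(\mathcal{A})\oplus 0$ once we decompose $(\mathbb{C}^n)^{\otimes k} = \mathrm{Sym}^k(\mathbb{C}^n)\oplus\mathrm{Sym}^k(\mathbb{C}^n)^{\perp}$. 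To make this concrete, complete $U$ to a unitary $W = [\,U\;\;V\,]$, where the columns of $V$ form an orthonormal basis of $\mathrm{Sym}^k(\mathbb{C}^n)^{\perp}$. By Proposition \ref{Cubically Balanced Matricizations Have Non-trivial Kernel}, $M_{cb}(\mathcal{A})V = 0$, so
\[W^{\dagger}M_{cb}(\mathcal{A})W = \left[\begin{array}{cc} M_{sym}(\mathcal{A}) & 0\\ 0 & 0\end{array}\right].\]
The diagonal block is $U^{\dagger}M_{cb}U = U^TM_{cb}U$ because $U$ is real.

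The semidefinite equivalence follows directly. One direction: if $M_{sym}(\mathcal{A})\succeq 0$, then $v^{\dagger}M_{cb}v = (U^Tv)^{\dagger}M_{sym}(U^Tv)\ge 0$. The other direction: if $M_{cb}(\mathcal{A})\succeq 0$, then $w^{\dagger}M_{sym}w = (Uw)^{\dagger}M_{cb}(Uw)\ge 0$. Alternatively, the block form shows that the spectrum of $M_{cb}(\mathcal{A})$ is the spectrum of $M_{sym}(\mathcal{A})$ together with $n^k-\binom{n+k-1}{k}$ additional zeros.

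For the three-way equivalence, the block decomposition gives
\[\ker M_{cb}(\mathcal{A}) = U\ker M_{sym}(\mathcal{A})\oplus \mathrm{Sym}^k(\mathbb{C}^n)^{\perp},\qquad \mathrm{rank}\,M_{cb}(\mathcal{A}) = \mathrm{rank}\,M_{sym}(\mathcal{A}).\]
We then argue as follows.
\begin{itemize}
\item Suppose $M_{sym}(\mathcal{A})$ is positive definite. Then it is positive semidefinite and injective, so $M_{cb}(\mathcal{A})\succeq 0$ and $\ker M_{cb}(\mathcal{A}) = \mathrm{Sym}^k(\mathbb{C}^n)^{\perp}$, which is the second bullet.
\item Since the kernel always contains $\mathrm{Sym}^k(\mathbb{C}^n)^{\perp}$, the condition $\ker M_{cb}(\mathcal{A}) = \mathrm{Sym}^k(\mathbb{C}^n)^{\perp}$ is equivalent, by rank–nullity, to $\mathrm{rank}\,M_{cb}(\mathcal{A}) = n^k - \dim\mathrm{Sym}^k(\mathbb{C}^n)^{\perp} = \binom{n+k-1}{k}$. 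This is the third bullet.
\item Finally, the rank equality gives $\mathrm{rank}\,M_{sym}(\mathcal{A}) = \binom{n+k-1}{k}$, which is full rank. Combined with $M_{sym}(\mathcal{A})\succeq 0$ from the first part, this yields positive definiteness.
\end{itemize}

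The main point needing care is the meaning of ``$\ker(M_{cb}(\mathcal{A}))\cong\mathrm{Sym}^k(\mathbb{C}^n)^{\perp}$''. An abstract isomorphism only means equal dimension, but since the kernel always contains $\mathrm{Sym}^k(\mathbb{C}^n)^{\perp}$, equal dimension forces equality. We will state this explicitly.

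We also need to justify that the range of $U$ is $\mathrm{Sym}^k(\mathbb{C}^n)$ with orthonormal columns, i.e. that $U$ is indeed an isometry. This amounts to checking that the symmetrized tensors for distinct orbits have disjoint supports, and that each column's normalization $\sqrt{\eta(\mathbf{i})}/k!$ times the sum over $S_k$ produces unit norm. Each distinct tensor in the orbit of $\mathbf{i}$ appears $k!/\eta(\mathbf{i})$ times in that sum, so the column equals $\eta(\mathbf{i})^{-1/2}$ times the sum of the $\eta(\mathbf{i})$ distinct basis tensors, which has unit norm. This is a routine check, and it is implicitly part of Fact \ref{Symmetric Matricization Embedding}.
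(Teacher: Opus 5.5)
Your proof is correct and is essentially the paper's argument. The unitary block form $W^{\dagger}M_{cb}(\mathcal{A})W = M_{sym}(\mathcal{A})\oplus 0$ is the matrix packaging of the paper's decomposition $z = Uv + w$ with $w\in\mathrm{Sym}^k(\mathbb{C}^n)^{\perp}\subseteq\ker M_{cb}(\mathcal{A})$, and both routes finish with rank--nullity; your explicit reading of ``$\cong$'' as equality, forced by the containment plus a dimension count, tightens a point the paper glosses over.
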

\begin{proof}
    Since the matrix $U$ from Fact \ref{Symmetric Matricization Embedding} represents an isometric embedding $\mathrm{Sym}^k(\mathbb{C}^n)\hookrightarrow(\mathbb{C}^{n})^{\otimes k}$, every vector $z\in \mathbb{C}^{n^k}\cong (\mathbb{C}^n)^{\otimes k}$ can be uniquely written as 
    \[z = Uv+w\]
    with $v\in \mathrm{Sym}^k(\mathbb{C}^n)$ and $w\in \mathrm{Sym}^k(\mathbb{C}^n)^{\perp}$. Since $M_{cb}(\mathcal{A})$ annihilates $\mathrm{Sym}^k(\mathbb{C}^n)^{\perp}$ by Proposition \ref{Cubically Balanced Matricizations Have Non-trivial Kernel}, we have that 
    \begin{equation}\label{Cubically Balanced and Symmetric Matricization Positive Semidefiniteness}
        z^{\dagger}M_{cb}(\mathcal{A})z = v^{\dagger}U^TM_{cb}(\mathcal{A})Uv = v^{\dagger}M_{sym}(\mathcal{A})v,
    \end{equation}
    with the last equality following from Fact \ref{Symmetric Matricization Embedding}. Thus, we immediately have that $M_{sym}(\mathcal{A})$ is Hermitian positive semidefinite if and only if $M_{cb}(\mathcal{A})$ is Hermitian positive semidefinite. Moreover, if $M_{sym}(\mathcal{A})$ is Hermitian positive definite, then equation \eqref{Cubically Balanced and Symmetric Matricization Positive Semidefiniteness} implies that $M_{cb}(\mathcal{A})$ is Hermitian positive semidefinite and that the image of the isometry represented by $U$ has trivial intersection with the kernel of $M_{cb}(\mathcal{A})$; hence, $\ker\big(M_{cb}(\mathcal{A})\big) = \mathrm{Sym}^k(\mathbb{C}^n)^{\perp}$. Conversely, suppose $M_{cb}(\mathcal{A})$ is Hermitian positive semidefinite and $\ker\big(M_{cb}(\mathcal{A})\big) \cong \mathrm{Sym}^k(\mathbb{C}^n)^{\perp}$. Then $M_{sym}(\mathcal{A})$ is still Hermitian positive semidefinite, and furthermore if for any $v\in \mathbb{C}^{\binom{n+k-1}{k}}\cong \mathrm{Sym}^k(\mathbb{C}^n)$ we have that 
    \[v^{\dagger}M_{sym}(\mathcal{A})v = 0,\]
    then by Fact \ref{Symmetric Matricization Embedding} it follows that 
    \[0 = v^{\dagger}\big(U^TM_{cb}(\mathcal{A})U\big)v = (Uv)^{\dagger}M_{cb}(\mathcal{A})(Uv),\]
    which implies that $Uv\in \ker\big(M_{cb}(\mathcal{A})\big)\cong \mathrm{Sym}^k(\mathbb{C}^n)^{\perp}$. But $Uv\in \mathrm{Sym}^k(\mathbb{C}^n)$ too, hence $Uv = 0$, and so consequently $v=0$ since $U$ represents an isometry. Thus, $M_{sym}(\mathcal{A})$ must be strictly Hermitian positive definite. Lastly, we note that if $\ker\big(M_{cb}(\mathcal{A})\big) \cong \mathrm{Sym}^k(\mathbb{C}^n)^{\perp}$, then by the rank-nullity theorem, $\mathrm{rank}\big(M_{cb}(\mathcal{A})\big) = \binom{n+k-1}{k}$; conversely, if $\mathrm{rank}\big(M_{cb}(\mathcal{A})\big) = \binom{n+k-1}{k}$, then by the rank-nullity theorem again $\dim\Big(\ker\big(M_{cb}(\mathcal{A})\big)\Big) = n^k-\binom{n+k-1}{k}$, from which it follows that $\ker\big(M_{cb}(\mathcal{A})\big) \cong \mathrm{Sym}^k(\mathbb{C}^n)^{\perp}$. 
\end{proof}
Thus, with symmetric matricizations, we may restate Fact \ref{Sufficient Condition for Hermitian Positive Semidefiniteness} in terms of strict Hermitian positive definiteness after replacing $M_{cb}$ with $M_{sym}$. 
\begin{corollary}\label{Sufficient Condition for Hermitian Positive Definiteness}
    Let $\mathcal{A}\in \mathbb{C}^{n\times\dots\times n}$ be an order $2k$ conjugate partially symmetric hypermatrix and $x = \left[\begin{array}{ccc}
        x_1 & \dots & x_n
    \end{array}\right]$ be a vector complex variables. Then 
    \[\langle U^Tx^{\otimes k}, M_{sym}(\mathcal{A})U^Tx^{\otimes k} \rangle = \mathcal{A}*(\overline{x},\dots,\overline{x},x\dots,x),\]
    again with $"\otimes$" denoting the Kronecker product and $\langle\text{ , }\rangle$ denoting the formal Frobenius inner product. Consequently, if $M_{sym}(\mathcal{A})$ is Hermitian positive definite, then $\mathcal{A}$ is Hermitian positive definite. 
\end{corollary}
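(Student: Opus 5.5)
We prove the identity by chaining Proposition \ref{Matricized Hermitian Form} with the embedding relation of Fact \ref{Symmetric Matricization Embedding}. Start from the right-hand side. Proposition \ref{Matricized Hermitian Form} gives
\[\mathcal{A}*(\overline{x},\dots,\overline{x},x,\dots,x) = \langle x^{\otimes k}, M_{cb}(\mathcal{A})x^{\otimes k}\rangle.\]
Now substitute $M_{cb}(\mathcal{A}) = UM_{sym}(\mathcal{A})U^T$. Since $U$ is real, its adjoint is $U^T$, so it can be moved across the (sesquilinear) formal Frobenius pairing:
\[\langle x^{\otimes k}, UM_{sym}(\mathcal{A})U^Tx^{\otimes k}\rangle = \langle U^Tx^{\otimes k}, M_{sym}(\mathcal{A})U^Tx^{\otimes k}\rangle.\]
This is the claimed identity. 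As a consistency check, one could verify independently that $UU^T$ is the orthogonal projection onto $\mathrm{Sym}^k(\mathbb{C}^n)$. Then $x^{\otimes k}$, being a symmetric tensor, satisfies $UU^Tx^{\otimes k} = x^{\otimes k}$, which agrees with the decomposition $z = Uv + w$ used in the proof of Theorem \ref{Characterizing Positive Definiteness of Symmetric Matricizations}, here with $w = 0$.

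For the positivity consequence, evaluate at a nonzero $z\in\mathbb{C}^n$. Setting $v := U^Tz^{\otimes k}$, the identity becomes
\[\mathcal{A}*(\overline{z},\dots,\overline{z},z,\dots,z) = v^{\dagger}M_{sym}(\mathcal{A})v.\]
If $M_{sym}(\mathcal{A})$ is positive definite, it remains to show $v\neq 0$. Because $z^{\otimes k}\in \mathrm{Sym}^k(\mathbb{C}^n)$ lies in the range of the isometry $U$, we have $z^{\otimes k} = Uv$. Hence
\[\|v\| = \|Uv\| = \|z^{\otimes k}\| = \|z\|^k > 0.\]
Therefore $v^{\dagger}M_{sym}(\mathcal{A})v > 0$, so $\mathcal{A}$ is Hermitian positive definite in the sense of \eqref{Heermitian positive definite tensor}.

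The main obstacle is making rigorous that $U^T$ acts as a left inverse of $U$ on symmetric tensors, i.e.\ that $UU^Tz^{\otimes k} = z^{\otimes k}$. This rests on $U$ being an isometry onto $\mathrm{Sym}^k(\mathbb{C}^n)$, so that $UU^T$ is the orthogonal projection onto that subspace. The normalization factor $\sqrt{\eta(\mathbf{i})}/k!$ in Fact \ref{Symmetric Matricization Embedding} should be checked to actually produce unit columns. Each orbit element appears $k!/\eta(\mathbf{i})$ times in the sum over $S_k$, so the column for $\mathbf{i}$ is $\tfrac{1}{\sqrt{\eta(\mathbf{i})}}$ times the sum of the $\eta(\mathbf{i})$ distinct basis tensors $e_{\mathbf{j}}$, $\mathbf{j}\in\mathrm{Orb}_{S_k}(\mathbf{i})$. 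This has norm $1$, and columns for distinct $\mathbf{i}$ have disjoint supports, so the columns are orthonormal as required.
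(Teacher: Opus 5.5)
Your proof is correct and follows the paper's route: you combine Proposition \ref{Matricized Hermitian Form} with $M_{cb}(\mathcal{A}) = UM_{sym}(\mathcal{A})U^T$ and move the real matrix $U$ across the pairing. You also explicitly verify that $U^Tz^{\otimes k}\neq 0$ for $z\neq 0$ (indeed $\|U^Tz^{\otimes k}\| = \|z\|^k$), a step the paper leaves implicit when it says the positivity claim follows ``immediately by definition.''
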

\begin{proof}
    Since $U$ represents an isometry,
    \[\langle U^Tx^{\otimes k}, M_{sym}(\mathcal{A})U^Tx^{\otimes k} \rangle = \langle x^{\otimes k},UM_{sym}(\mathcal{A})U^Tx^{\otimes k}\rangle = \langle x^{\otimes k},M_{cb}(\mathcal{A})x^{\otimes k}\rangle,\]
    which by Fact \ref{Matricized Hermitian Form} is precisely equal to $\mathcal{A}*(\overline{x},\dots,\overline{x},x,\dots,x)$. It then immediately follows by definition that if $M_{sym}(\mathcal{A})$ is Hermitian positive definite, then $\mathcal{A}$ is Hermitian positive definite. 
\end{proof}
\begin{remark}
    We quickly note that, similarly as with Fact \ref{Sufficient Condition for Hermitian Positive Semidefiniteness}, the converse of Corollary \ref{Sufficient Condition for Hermitian Positive Definiteness} also does not hold in general. For example, consider again the Hermitian positive definite hypermatrix $\mathcal{I}-\frac{1}{4}\mathcal{J}$. It's symmetric matricization is given by 
    \[M_{sym}\left(\mathcal{I}-\frac{1}{4}\mathcal{J}\right) = \left[\begin{array}{ccc}
        1 & 0 & 0 \\
        0 & -\frac{1}{2} & 0 \\
        0 & 0 & 1
    \end{array}\right],\]
    which has eigenvalues $1$, $1$, and $-\frac{1}{2}$, hence $M_{sym}\left(\mathcal{I}-\frac{1}{4}\mathcal{J}\right)$ is not Hermitian positive definite. 
\end{remark}
With this broad overview of higher degree Hermitian forms, their various representations, and sufficient conditions for guaranteeing their positivity/non-negativity, we may now introduce the main object of study in this article: $t$-Hermitian forms, which are both a generalization of classical Hermitian forms, and in particular may be viewed as a lift of collections of classical Hermitian forms via the $t$-product extended to higher order tensors. 

\section{$t$-Hermitian Forms}\label{$t$-Hermitian Forms}
Just as classical Hermitian forms may be uniquely represented as hypermatrices via multilinear matrix multiplication, $t$-Hermitian forms may be uniquely represented as higher-order tubal hypermatrices via a synthesis of multilinear matrix multiplication with the $t$-product. Therefore, before explicitly defining $t$-Hermitian forms, we first give a very brief overview of the $t$-product. 
\subsection{Review of the $t$-Product on Order $3$ Tensors}
\begin{defn}
    For an order 3 hypermatrix $\mathcal{A} = [a_{ijk}]\in \mathbb{C}^{m\times n\times p}$, the \textbf{$\boldsymbol{k^{th}}$ frontal slice} of $\mathcal{A}$ as the matrix 
    \[\mathcal{A}_{(k)} := \mathcal{A}[:,:,k] = [a_{ijk}]_{m\times n\times 1}\in \mathbb{C}^{m\times n}\]
    (that is, the index $k$ is fixed). The \textbf{block-circulant matrix} of $\mathcal{A}$ is then defined to be unique matrix 
    \[\mathrm{bcirc}(\mathcal{A}) := \left[\begin{array}{cccc}
        \mathcal{A}_{(1)} & \mathcal{A}_{(p)} & \dots & \mathcal{A}_{(2)} \\
        \mathcal{A}_{(2)} & \mathcal{A}_{(1)} & \dots & \mathcal{A}_{(3)} \\
        \vdots & \vdots & \ddots & \vdots \\
        \mathcal{A}_{(p)} & \mathcal{A}_{(p-1)} & \dots & \mathcal{A}_{(1)}
    \end{array}\right]\in \mathbb{C}^{mp\times np}.\]
    The \textbf{unfolding} of $\mathcal{A}$ as the block matrix 
    \[\mathrm{unfold}(\mathcal{A}) := \left[\begin{array}{c}
        \mathcal{A}_{(1)} \\
        \mathcal{A}_{(2)} \\
        \vdots \\
        \mathcal{A}_{(p)}
    \end{array}\right]\in \mathbb{C}^{mp\times n}\]
\end{defn}
\begin{defn}[The $t$-product \cite{kilmer2000third}]
    For order 3 hypermatrices $\mathcal{A}\in \mathbb{C}^{m\times n\times p}$ and $\mathcal{B}\in \mathbb{C}^{n\times q\times p}$, the \textbf{t-product} of $\mathcal{A}$ and $\mathcal{B}$, denoted $\mathcal{A}*_t\mathcal{B}$, is the an order $3$ hypermatrix given by 
    \[A*_tB := \mathrm{fold}\big(\mathrm{bcirc}(A)\mathrm{unfold}(B)\big)\in \mathbb{C}^{m\times q\times p},\]
    where $\mathrm{fold}$ is defined to be the inverse of $\mathrm{unfold}$. 
\end{defn}
\begin{defn}
    For an order 3 hypermatrix $\mathcal{A}\in \mathbb{C}^{m\times n\times p}$, the \textbf{$\boldsymbol{(i,j)}$-tube} of $\mathcal{A}$ is the vector 
    \[\mathcal{A}[i,j,:] = [a_{ijk}]_{1\times 1\times p}\in \mathbb{C}^p\]
    (that is, the pair of indices $(i,j)$ is fixed). 
\end{defn}
The $t$-product of hypermatrices $\mathcal{A}\in \mathbb{C}^{m\times n\times p}$ and $\mathcal{B}\in \mathbb{C}^{n\times q\times p}$ may then be redefined tubewise as 
\begin{equation}\label{tubewise t-product}
    (\mathcal{A}*_t\mathcal{B})[i,j,:] = \sum\limits_{\ell=1}^n\mathcal{A}[i,\ell,:]\circledast\mathcal{B}[\ell,j,:],
\end{equation}
where $\circledast$ denotes the discrete circular convolution operator \cite{kilmer2013third}. The tubewise characterization of the $t$-product is useful because applying the Fast Fourier Transform on both sides of equation \eqref{tubewise t-product}, we obtain 
\begin{equation}\label{tubewise t-product in frequency domain}
    \mathrm{fft}\big((\mathcal{A}*_t\mathcal{B})[i,j,:]\big) = \sum\limits_{\ell=1}^n\mathrm{fft}\big(\mathcal{A}[i,\ell,:]\big)\odot\mathrm{fft}\big(\mathcal{B}[\ell,j,:]\big)
\end{equation}
with $\odot$ denoting the Hadamard product, from which it straightforwardly follows that slice-wise under this Fourier change-of-basis transformation the $t$-product reduces to regular matrix multiplication. Formally, we have the following fact. 
\begin{fact}\cite{kilmer2013third}\label{t-product in frequency domain slicewise reduces to matrix multiplication}
    Let $\mathcal{A}\in \mathbb{C}^{m\times n\times p}$, and $\mathcal{B}\in \mathbb{C}^{n\times q\times p}$, and denote $\widehat{\mathcal{A}} := \mathrm{fft}_3(\mathcal{A})$, $\widehat{\mathcal{B}} := \mathrm{fft}_3(\mathcal{B})$, and $\widehat{\mathcal{A}*_t\mathcal{B}} := \mathrm{fft}_3(\mathcal{A}*_t\mathcal{B})$. Then 
    \[\widehat{\mathcal{A}*_t\mathcal{B}}_{(\ell)} = \widehat{\mathcal{A}}_{(\ell)}\widehat{\mathcal{B}}_{(\ell)}\]
    for each $\ell\in [p]$. 
\end{fact}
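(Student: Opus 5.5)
The plan is to reduce the slicewise statement to the tubewise formula \eqref{tubewise t-product} and the convolution theorem for the discrete Fourier transform. First we verify that the block-circulant definition of $\mathcal{A}*_t\mathcal{B}$ agrees with \eqref{tubewise t-product}. Read off block row $\ell$ of $\mathrm{bcirc}(\mathcal{A})\mathrm{unfold}(\mathcal{B})$. The $(\ell,m)$ block of $\mathrm{bcirc}(\mathcal{A})$ is $\mathcal{A}_{(((\ell-m)\bmod p)+1)}$, so the $\ell$-th frontal slice of the product is
\[
(\mathcal{A}*_t\mathcal{B})_{(\ell)}=\sum_{m=1}^p\mathcal{A}_{(((\ell-m)\bmod p)+1)}\mathcal{B}_{(m)}.
\]
Taking the $(i,j)$ entry and exchanging the finite sums gives
\[
(\mathcal{A}*_t\mathcal{B})[i,j,\ell]=\sum_{r=1}^n\sum_{m=1}^p\mathcal{A}[i,r,((\ell-m)\bmod p)+1]\,\mathcal{B}[r,j,m].
\]
The inner sum is exactly $\big(\mathcal{B}[r,j,:]\circledast\mathcal{A}[i,r,:]\big)[\ell]$. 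Since circular convolution is commutative (substitute $m\mapsto ((\ell-m)\bmod p)+1$), this equals $\big(\mathcal{A}[i,r,:]\circledast\mathcal{B}[r,j,:]\big)[\ell]$, which is \eqref{tubewise t-product}.

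Second, we prove the convolution theorem $\mathrm{fft}(a\circledast b)=\mathrm{fft}(a)\odot\mathrm{fft}(b)$ for $a,b\in\mathbb{C}^p$. With $\mathrm{fft}(v)[s]=\sum_{\ell}\omega_p^{(s-1)(\ell-1)}v[\ell]$, expand $\mathrm{fft}(a\circledast b)[s]$ as a double sum over $\ell$ and $m$. Then split the exponent as
\[
(\ell-1)=(m-1)+\big(((\ell-m)\bmod p)\big)\pmod p,
\]
which is legitimate because $\omega_p^p=1$. For fixed $m$, as $\ell$ ranges over $[p]$ the index $((\ell-m)\bmod p)+1$ also ranges over $[p]$ bijectively, so the double sum factors as $\mathrm{fft}(a)[s]\,\mathrm{fft}(b)[s]$. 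The only real bookkeeping issue in the whole proof is this mod-$p$ reindexing together with the index conventions. Everything else is linearity.

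Third, apply $\mathrm{fft}$ along mode 3 to the tubewise formula. Because $\mathrm{fft}$ is linear, it passes through the sum over $r$, and the convolution theorem then gives \eqref{tubewise t-product in frequency domain}. Evaluating that identity at frequency $\ell$ yields
\[
\widehat{\mathcal{A}*_t\mathcal{B}}[i,j,\ell]=\sum_{r=1}^n\widehat{\mathcal{A}}[i,r,\ell]\,\widehat{\mathcal{B}}[r,j,\ell]=\big(\widehat{\mathcal{A}}_{(\ell)}\widehat{\mathcal{B}}_{(\ell)}\big)[i,j].
\]
This holds for every $i$ and $j$, which proves the claim for each $\ell\in[p]$.

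An alternative route is to note that $(F_p\otimes I_m)\,\mathrm{bcirc}(\mathcal{A})\,(F_p^{-1}\otimes I_n)$ is block diagonal with blocks $\widehat{\mathcal{A}}_{(\ell)}$, since the DFT diagonalizes circulant matrices, and then multiply. The tubewise route above is more elementary, so we would use it.
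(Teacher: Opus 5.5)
Your proposal is correct and follows essentially the same route the paper indicates: the tubewise convolution formula \eqref{tubewise t-product}, then the DFT convolution theorem to get \eqref{tubewise t-product in frequency domain}, then reading off $\widehat{\mathcal{A}}_{(\ell)}\widehat{\mathcal{B}}_{(\ell)}$ entrywise. The paper simply cites the tubewise formula and the convolution theorem from Kilmer et al., whereas you also derive the first from the $\mathrm{bcirc}$ definition and prove the second, with correct index bookkeeping.
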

Equivalently, we have the following fact, which will also be useful for our purposes:
\begin{fact}\cite{kilmer2013third}\label{bcirc property}
    Let $\mathcal{A}\in \mathbb{C}^{m\times n\times p}$ and $\widehat{\mathcal{A}} := \mathrm{fft}_3(\mathcal{A})$. Then 
    \[\mathrm{bcirc}(\mathcal{A}) = (\mathrm{DFT}_p^H\otimes I_m)\mathrm{diag}\big(\widehat{\mathcal{A}}_{(1)},...,\widehat{\mathcal{A}}_{(p)}\big)(\mathrm{DFT}_p\otimes I_n),\]
    where $\mathrm{DFT}_p$ is the $p\times p$ normalized discrete Fourier transform given by 
    \[\mathrm{DFT}_p[i,j] = \frac{1}{\sqrt{p}}\omega_p^{(i-1)(j-1)}\] 
    with $\omega_p := e^{-\frac{2\pi i}{p}}$ and $i,j\in [p]$, and $H$ here denoting the normal matrix conjugate transpose. 
\end{fact}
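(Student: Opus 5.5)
The identity in Fact \ref{bcirc property} is the standard statement that the Fourier matrix block-diagonalizes block-circulant matrices. I would prove it by verifying the equivalent relation
\[(\mathrm{DFT}_p\otimes I_m)\,\mathrm{bcirc}(\mathcal{A})\,(\mathrm{DFT}_p^H\otimes I_n) = \mathrm{diag}\big(\widehat{\mathcal{A}}_{(1)},\dots,\widehat{\mathcal{A}}_{(p)}\big)\]
one block at a time. This form is equivalent because $\mathrm{DFT}_p$ is unitary, so $\mathrm{DFT}_p\otimes I_m$ and $\mathrm{DFT}_p\otimes I_n$ are unitary as well.

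First, I record the block structure. The $(r,s)$ block of $\mathrm{bcirc}(\mathcal{A})$ is $\mathcal{A}_{(((r-s)\bmod p)+1)}$, which gives
\[\mathrm{bcirc}(\mathcal{A}) = \sum_{k=1}^{p} C^{k-1}\otimes \mathcal{A}_{(k)},\]
where $C$ is the $p\times p$ cyclic down-shift permutation matrix ($Ce_s = e_{(s\bmod p)+1}$). The key step is the spectral decomposition of $C$. For $a\in[p]$, the column vector $f_a := \frac{1}{\sqrt p}\big(\omega_p^{-(a-1)(j-1)}\big)_{j\in[p]}$ is a column of $\mathrm{DFT}_p^H$, and a one-line index shift gives $Cf_a = \omega_p^{\,a-1} f_a$. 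Hence
\[\mathrm{DFT}_p\, C^{k-1}\, \mathrm{DFT}_p^H = \mathrm{diag}\big(\omega_p^{(a-1)(k-1)}\big)_{a\in[p]}.\]
This is the only step requiring care. I expect the main obstacle to be bookkeeping rather than a conceptual difficulty: the sign of the exponent has to match the convention $\omega_p=e^{-2\pi i/p}$ together with the down-shift orientation of $\mathrm{bcirc}$. I would check it directly on $C$ applied to $f_a$ and, if necessary, replace $C$ by $C^{-1}$.

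Finally, the mixed-product property of the Kronecker product gives
\[(\mathrm{DFT}_p\otimes I_m)\big(C^{k-1}\otimes \mathcal{A}_{(k)}\big)(\mathrm{DFT}_p^H\otimes I_n) = \mathrm{diag}\big(\omega_p^{(a-1)(k-1)}\big)_{a}\otimes \mathcal{A}_{(k)}.\]
Summing over $k$, the $a$-th diagonal block equals $\sum_k \omega_p^{(a-1)(k-1)}\mathcal{A}_{(k)}$. Entrywise this is exactly $\mathrm{fft}$ of each tube $\mathcal{A}[i,j,:]$ evaluated at frequency $a$, since $\mathrm{fft}(v)=\sqrt p\,\mathrm{DFT}_p v$. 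So the $a$-th block is $\widehat{\mathcal{A}}_{(a)}$, and all off-diagonal blocks vanish. Multiplying back by the unitaries $\mathrm{DFT}_p^H\otimes I_m$ on the left and $\mathrm{DFT}_p\otimes I_n$ on the right yields the stated formula.

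As a consistency check, this recovers Fact \ref{t-product in frequency domain slicewise reduces to matrix multiplication}, which confirms the normalization.
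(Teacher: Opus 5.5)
Your proof is correct. The step you flagged works out exactly as you wrote it: with $Ce_s=e_{(s\bmod p)+1}$ one has $(Cf_a)[r]=f_a[r-1]$ (indices mod $p$), and this equals $\omega_p^{a-1}f_a[r]$ for $\omega_p=e^{-2\pi i/p}$. So no replacement of $C$ by $C^{-1}$ is needed, and the $a$-th block $\sum_k\omega_p^{(a-1)(k-1)}\mathcal{A}_{(k)}$ is precisely the unnormalized $\mathrm{fft}$, matching the paper's convention $\mathrm{fft}(v)=\sqrt{p}\,\mathrm{DFT}_p v$.

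The paper gives no proof of this fact; it simply quotes it from \cite{kilmer2013third}. Your argument is the standard proof of this block-diagonalization of block-circulant matrices: write $\mathrm{bcirc}(\mathcal{A})=\sum_k C^{k-1}\otimes\mathcal{A}_{(k)}$, diagonalize the cyclic shift with the Fourier matrix, and apply the mixed-product rule.
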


\begin{proposition}[t-Product Algebra \cite{kilmer2011factorization}]
    The vector space of hypermatrices $\mathbb{C}^{n\times n\times p}$ is an associative unital algebra under the $t$-product $*_t$, with unit given by $\mathcal{I}_1$ whose first frontal slice is the identity matrix $I_n$, and whose other frontal slices are the zero matrix $0_{n\times n}$; that is, 
    \[\mathrm{unfold}(\mathcal{I}_1) = \left[\begin{array}{c}
        I_n \\
        0_{n\times n} \\
        \vdots \\
        0_{n\times n}
    \end{array}\right].\]
\end{proposition}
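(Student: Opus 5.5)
The cleanest route is through the Fourier diagonalization in Fact \ref{t-product in frequency domain slicewise reduces to matrix multiplication}, equivalently the block-circulant representation of Fact \ref{bcirc property}. The map $\mathcal{A}\mapsto\mathrm{bcirc}(\mathcal{A})$ is linear and injective from $\mathbb{C}^{n\times n\times p}$ into $\mathbb{C}^{np\times np}$. By the definition of the $t$-product,
\[
\mathrm{bcirc}(\mathcal{A}*_t\mathcal{B}) = \mathrm{bcirc}(\mathcal{A})\,\mathrm{bcirc}(\mathcal{B}).
\]
I would verify this by observing that $\mathrm{unfold}(\mathcal{A}*_t\mathcal{B})=\mathrm{bcirc}(\mathcal{A})\,\mathrm{unfold}(\mathcal{B})$, and that the remaining block columns of $\mathrm{bcirc}(\mathcal{B})$ are cyclic shifts of $\mathrm{unfold}(\mathcal{B})$, which commute with block-circulant left multiplication.

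A shorter way to see the same identity is to conjugate by the unitary $\mathrm{DFT}_p\otimes I_n$. Fact \ref{bcirc property} then turns both sides into block-diagonal matrices, and Fact \ref{t-product in frequency domain slicewise reduces to matrix multiplication} shows the diagonal blocks agree slice by slice.

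With this in hand, $\mathbb{C}^{n\times n\times p}$ is isomorphic, as a vector space with multiplication, to the image subspace of block-circulant matrices. That image is closed under products by the identity above, so it is a subalgebra of $\mathbb{C}^{np\times np}$. Associativity of $*_t$ and bilinearity of $*_t$ (distributivity and compatibility with scalars) are then inherited directly from matrix multiplication via injectivity.

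For the unit, note that $\mathrm{bcirc}(\mathcal{I}_1)=I_{np}$, since only the diagonal blocks carry $\mathcal{I}_1{}_{(1)}=I_n$ and all other blocks are zero. This gives $\mathrm{bcirc}(\mathcal{I}_1*_t\mathcal{A})=\mathrm{bcirc}(\mathcal{A})=\mathrm{bcirc}(\mathcal{A}*_t\mathcal{I}_1)$, and injectivity finishes the argument.

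The only step needing care is the multiplicativity of $\mathrm{bcirc}$. I would prove it via the Fourier route: $\widehat{\mathcal{I}_1}_{(\ell)}=I_n$ for every $\ell$, and the slicewise identity $\widehat{\mathcal{A}*_t\mathcal{B}}_{(\ell)}=\widehat{\mathcal{A}}_{(\ell)}\widehat{\mathcal{B}}_{(\ell)}$ makes all the algebra axioms immediate from those of $\prod_{\ell=1}^p\mathbb{C}^{n\times n}$.
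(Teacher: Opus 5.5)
Your proof is correct and complete. The paper gives no argument of its own for this proposition; it simply cites \cite{kilmer2011factorization}, so there is no in-paper proof to compare against.

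Your route is the standard one: transport the algebra structure along the injective linear map $\mathcal{A}\mapsto\mathrm{bcirc}(\mathcal{A})$, or equivalently along $\mathrm{fft}_3$ onto $\prod_{\ell=1}^p\mathbb{C}^{n\times n}$. It uses exactly the tools the paper has already set up in Facts~\ref{t-product in frequency domain slicewise reduces to matrix multiplication} and~\ref{bcirc property}.

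Any one of your three justifications of $\mathrm{bcirc}(\mathcal{A}*_t\mathcal{B})=\mathrm{bcirc}(\mathcal{A})\,\mathrm{bcirc}(\mathcal{B})$ suffices on its own.
\begin{itemize}
\item The block-column/shift argument is self-contained. It only needs $(C_p\otimes I_n)\,\mathrm{bcirc}(\mathcal{A})=\mathrm{bcirc}(\mathcal{A})\,(C_p\otimes I_n)$, with $C_p$ the $p\times p$ cyclic down-shift. It does not depend on the Fourier facts, which the paper itself only cites.
\item The $\mathrm{fft}_3$ version is the most economical. It also matches how the paper later treats $\mathbb{T}_p$ through the $\mathrm{fft}$ $^*$-isomorphism.
\end{itemize}
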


\begin{defn}[t-Conjugate Transpose \cite{kilmer2011factorization}]\label{t-conjugate transpose}
    Let $\mathcal{A}\in \mathbb{C}^{m\times n\times p}$. The \textbf{$\boldsymbol{t}$-conjugate transpose} of $\mathcal{A}$ is the hypermatrix $\mathcal{A}^H\in \mathbb{C}^{n\times m\times p}$ obtained by taking the conjugate transpose of each of the frontal slices $\mathcal{A}_{(k)}$ and then reversing their order from $k=2$ to $k=p$; that is, 
    \[\mathcal{A}^H := \mathrm{fold}\left(\left[\begin{array}{c}
        \mathcal{A}_{(1)}^H \\
        \mathcal{A}_{(p)}^H \\
        \vdots \\
        \mathcal{A}_{(2)}^H
    \end{array}\right]\right)\]
    A hypermatrix $\mathcal{A} \in \mathbb{C}^{n\times n\times p}$ is \textbf{t-Hermitian} if $\mathcal{A}^H = \mathcal{A}$.
\end{defn}
The ordering of the frontal slices in the definition of the $t$-conjugate transpose is so that in general 
\[(\mathcal{A}*_t\mathcal{B})^H = \mathcal{B}^H*_t\mathcal{A}^H\]
for any $\mathcal{A}\in \mathbb{C}^{m\times n\times p}$ and $\mathcal{B}\in \mathbb{C}^{n\times q\times p}$. Thus, equipped with the $t$-conjugate transpose, $\mathbb{C}^{n\times n\times p}$ is an associative unital $^*$-algebra. For our purposes, it will be important to note how the $t$-conjugate transpose transforms a hypermatrix after the Fast Fourier Transform has been applied to its tubes. Indeed, we have the following fact:
\begin{fact}\label{t-conjugate transpose in frequency domain}
    Let $\mathcal{A}\in \mathbb{C}^{m\times n\times p}$ and $\widehat{\mathcal{A}} := \mathrm{fft}_3(\mathcal{A})$. Then 
    \[\mathcal{A}^H = \mathrm{ifft}_3\Big(\widehat{\mathcal{A}}^{\widetilde{H}}\Big)\]
    where $\widetilde{H}$ means we take the conjugate transpose of each frontal slice without reversing their order, i.e. 
    \[\widehat{\mathcal{A}}^{\widetilde{H}} := \mathrm{fold}\left(\left[\begin{array}{c}
        \widehat{\mathcal{A}}_{(1)}^H \\
        \widehat{\mathcal{A}}_{(2)}^H \\
        \vdots \\
        \widehat{\mathcal{A}}_{(p)}^H
    \end{array}\right]\right).\]
    Consequently, while in the \textbf{spatial domain} (that is, before applying $\mathrm{fft}_3$) we have 
    \[\left(\mathcal{A}^H\right)_{(\ell)} = (\mathcal{A}_{(p-\ell+2)})^H\]
    for each $\ell\in [p]$, whereas in the \textbf{frequency domain} (that is, after applying $\mathrm{fft}_3$) we have 
    \begin{equation}\label{conjugate-transpose in frequency domain}
        \left(\widehat{\mathcal{A}^H}\right)_{(\ell)} = (\widehat{\mathcal{A}}_{(\ell)})^H
    \end{equation}
    for each $\ell\in [p]$. 
\end{fact}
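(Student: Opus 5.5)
The plan is to prove the spatial-domain identity first and obtain the frequency-domain statement by applying $\mathrm{fft}_3$ to it. Write $\mathcal{B} := \mathcal{A}^H$, so that $\mathcal{B}_{(\ell)} = (\mathcal{A}_{(p-\ell+2)})^H$ for each $\ell\in[p]$. Here indices are read modulo $p$, so $\ell=1$ gives $\mathcal{A}_{(1)}^H$, as in Definition \ref{t-conjugate transpose}. Entrywise this says that the $(j,i)$-tube of $\mathcal{B}$ is obtained from the $(i,j)$-tube of $\mathcal{A}$ by conjugating and reversing cyclically. That is, $\mathcal{B}[j,i,\ell] = \overline{\mathcal{A}[i,j,p-\ell+2]}$.

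The key step is a one-dimensional DFT identity. For a tube $a\in\mathbb{C}^p$, define the cyclic reversal $\tilde a[\ell] := a[((1-\ell)\bmod p)+1]$, i.e.\ $\tilde a[\ell] = a[p-\ell+2]$ with $\tilde a[1]=a[1]$. The claim is that
\[\mathrm{fft}(\overline{\tilde a}) = \overline{\mathrm{fft}(a)}.\]
To verify it, compute
\[\mathrm{fft}(\overline{\tilde a})[m] = \sum_{\ell=1}^p \overline{a[p-\ell+2]}\,\omega_p^{(\ell-1)(m-1)}.\]
Substitute $r = p-\ell+2 \pmod p$, so that $\ell-1\equiv -(r-1)\pmod p$. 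Since $\omega_p^p=1$, the sum becomes
\[\sum_{r=1}^p \overline{a[r]}\,\omega_p^{-(r-1)(m-1)} = \overline{\sum_{r=1}^p a[r]\,\omega_p^{(r-1)(m-1)}} = \overline{\mathrm{fft}(a)[m]},\]
where the last step uses $\overline{\omega_p}=\omega_p^{-1}$.

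Applying this tubewise gives $\widehat{\mathcal{B}}[j,i,m] = \overline{\widehat{\mathcal{A}}[i,j,m]}$ for every $m$. This is exactly $(\widehat{\mathcal{A}^H})_{(m)} = (\widehat{\mathcal{A}}_{(m)})^H$, which is equation \eqref{conjugate-transpose in frequency domain}, and equivalently $\mathrm{fft}_3(\mathcal{A}^H) = \widehat{\mathcal{A}}^{\widetilde H}$. Applying $\mathrm{ifft}_3$ then yields $\mathcal{A}^H = \mathrm{ifft}_3(\widehat{\mathcal{A}}^{\widetilde H})$. The spatial-domain formula is simply the definition restated slice by slice.

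As a cross-check, one could instead use Fact \ref{bcirc property}. It is easy to see that $\mathrm{bcirc}(\mathcal{A}^H) = \mathrm{bcirc}(\mathcal{A})^H$, because conjugate-transposing the block circulant reverses the order of the blocks. Taking the conjugate transpose of the factorization in Fact \ref{bcirc property} then gives the block-diagonal factor $\mathrm{diag}(\widehat{\mathcal{A}}_{(\ell)}^H)$. The resulting representation is unique because the $\mathrm{DFT}_p$ factors are unitary.

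The main obstacle is purely bookkeeping: the index $p-\ell+2$ must be interpreted modulo $p$, so that $\ell=1$ maps to $1$, and this convention has to match the sign in $\omega_p$. Everything else is routine.
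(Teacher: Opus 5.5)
Your proof is correct, but it takes a different route from the paper.

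\textbf{The paper's route.} The paper argues at the level of block-circulant matrices. It sets $\mathcal{A}^{H_F} := \mathrm{ifft}_3(\widehat{\mathcal{A}}^{\widetilde{H}})$ and uses Fact \ref{bcirc property} to get $\mathrm{bcirc}(\mathcal{A}^{H_F}) = \mathrm{bcirc}(\mathcal{A})^H$. It then checks directly from the block pattern that $\mathrm{bcirc}(\mathcal{A}^H) = \mathrm{bcirc}(\mathcal{A})^H$, and concludes from injectivity of $\mathrm{bcirc}$. This is essentially your ``cross-check,'' except that you cancel the unitary DFT factors instead of invoking injectivity.

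\textbf{Your route.} Your main argument is a one-dimensional DFT computation, done tube by tube. It is the same computation the paper carries out in Appendix B for Fact \ref{Fourier conjugation in frequency domain}. Your identity $\mathcal{A}^H[j,i,\ell] = \overline{\mathcal{A}[i,j,p-\ell+2]}$ says exactly that $\mathcal{A}^H$ is the slicewise transpose of $J(\mathcal{A})$. So this Fact reduces to the Fourier-conjugation Fact plus the trivial commutation of $\mathrm{fft}_3$ with swapping the first two indices.

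\textbf{Comparison.} Your approach is more elementary and self-contained: it needs neither the block diagonalization of $\mathrm{bcirc}$ nor its injectivity, and it makes the link between $H$ and $J$ explicit. The paper's approach explains conceptually why the reversed slice order is the right definition. That ordering is exactly what makes $\mathrm{bcirc}$ intertwine the $t$-conjugate transpose with the ordinary matrix adjoint, which is the same mechanism behind $(\mathcal{A}*_t\mathcal{B})^H = \mathcal{B}^H*_t\mathcal{A}^H$.
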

See Appendix Section B for proof of this fact. Lastly, we introduce the notion of "Fourier conjugation," which reduces to conjugation slice-wise in the frequency domain. 
\begin{defn}\label{Fourier conjugation}
    Let $\mathcal{A}\in \mathbb{C}^{m\times n\times p}$ denote $\overline{\mathcal{A}}\in \mathbb{C}^{m\times n\times p}$ be such that \[\overline{\mathcal{A}}[i,j,k] = \overline{\mathcal{A}[i,j,k]}\]
    for each $i\in [m]$, $j\in [n]$, and $k\in [p]$; that is, $\overline{\mathcal{A}}$ is obtained from $\mathcal{A}$ by conjugated every entry in $\mathcal{A}$. We define \textbf{Fourier conjugation} of $\mathcal{A}$ to be the hypermatrix $J(\mathcal{A})\in \mathbb{C}^{m\times n\times p}$ such that 
    \[J(\mathcal{A})_{(\ell+1)} := \overline{\mathcal{A}_{\big((-\ell\bmod p)+1\big)}}\]
    for $\ell=0,1,...,p-1$. 
\end{defn}
\begin{fact}\label{Fourier conjugation in frequency domain}
    Let $\mathcal{A}\in \mathbb{C}^{m\times n\times p}$ and $\widehat{\mathcal{A}} := \mathrm{fft}_3(\mathcal{A})$. Then 
    \[\widehat{J(\mathcal{A})}_{(\ell)} = \overline{\widehat{\mathcal{A}}_{(\ell)}}\]
    for each $\ell\in [p]$. 
\end{fact}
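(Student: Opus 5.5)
The plan is to reduce the claim to a statement about a single tube and then check it directly from the definition of the DFT. Both $J$ and $\mathrm{fft}_3$ act tube by tube. Indeed, $J(\mathcal{A})[i,j,:]$ depends only on $\mathcal{A}[i,j,:]$, and $\mathrm{fft}_3$ applies $\mathrm{fft}$ to each $(i,j)$-tube. So it suffices to fix $(i,j)$, write $a := \mathcal{A}[i,j,:]\in\mathbb{C}^p$ and $b := J(\mathcal{A})[i,j,:]$, and show $\mathrm{fft}(b) = \overline{\mathrm{fft}(a)}$ entrywise. The $\ell^{th}$ entry of this tube identity is exactly the $(i,j)$-entry of the slice identity $\widehat{J(\mathcal{A})}_{(\ell)} = \overline{\widehat{\mathcal{A}}_{(\ell)}}$.

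To avoid the $+1$ shifts I will index tubes by $0,1,\dots,p-1$ modulo $p$. Definition \ref{Fourier conjugation} then reads $b_m = \overline{a_{-m}}$ for all $m$, indices taken mod $p$. With the paper's convention $\mathrm{fft}(v) = \sqrt{p}\,\mathrm{DFT}_p v$, we have $\mathrm{fft}(v)_\ell = \sum_{m=0}^{p-1} v_m\,\omega_p^{\ell m}$. Hence
\[
\mathrm{fft}(b)_\ell = \sum_{m=0}^{p-1}\overline{a_{-m}}\,\omega_p^{\ell m}.
\]
Next I reindex by $r := -m \bmod p$, which is a bijection of $\mathbb{Z}/p$. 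Since $\omega_p^{p}=1$, we get $\omega_p^{\ell m} = \omega_p^{-\ell r}$. Using $|\omega_p| = 1$, so that $\omega_p^{-1} = \overline{\omega_p}$, the sum becomes
\[
\sum_{r=0}^{p-1}\overline{a_r}\,\overline{\omega_p^{\ell r}} = \overline{\sum_{r=0}^{p-1} a_r\,\omega_p^{\ell r}} = \overline{\mathrm{fft}(a)_\ell}.
\]

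The only real obstacle is bookkeeping. One must confirm that the paper's indexing $J(\mathcal{A})_{(\ell+1)} = \overline{\mathcal{A}_{((-\ell\bmod p)+1)}}$ matches $b_m = \overline{a_{-m}}$ after shifting to $0$-based indices. One must also check that the periodicity of $\omega_p^{\ell m}$ in $m$ makes the substitution $r=-m$ legitimate, including the term $m=0$, which is fixed. Once this is in place, the identity holds for every tube and every frequency $\ell\in[p]$, and that gives the Fact.
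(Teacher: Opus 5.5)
Your proof is correct and follows essentially the same route as the paper's: you fix an $(i,j)$-tube, write the Fourier-conjugated tube as $b_m=\overline{a_{-m \bmod p}}$, reindex the DFT sum via $m\mapsto -m \bmod p$ using periodicity of $\omega_p$, and pull the conjugation outside the sum. Your conversion between the $1$-based indexing in the definition of $J$ and $0$-based indices is also consistent with the paper's.
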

See Appendix Section B for proof of this fact. 

\subsection{Degree $k$ $t$-Hermitian Forms}
\begin{defn}
    Let $\mathcal{A}\in \mathbb{C}^{n\times...\times n\times p}$ be an order $2k+1$ hypermatrix. The \textbf{$\boldsymbol{\ell^{th}}$ frontal slice} of $\mathcal{A}$ is the hypermatrix 
    \[\mathcal{A}_{(\ell)} := \mathcal{A}[:,\dots,:,\ell]\in \mathbb{C}^{\overbrace{n\times\dots\times n}^{2k\text{ times}}}\]
    (that is, the last index $\ell$ is fixed). The \textbf{$\mathbf{(i_1,\dots,i_k,j_1,\dots,j_k)^{th}}$ tube} of $\mathcal{A}$ is the vector 
    \[a_{i_1,\dots,i_k,j_1,\dots,j_k} := \mathcal{A}[i_1,\dots,i_k,j_1,\dots,j_k,:]\in \mathbb{C}^p\]
    (that is, indices $(i_1,\dots,i_k,j_1,\dots,j_k)$ are fixed). In general, we will always refer to the last dimension of a hypermatrix as its \textbf{tubal mode}. 
\end{defn}
\begin{defn}
    The $n\times p$ matrix $\mathcal{X}$ is an \textbf{indeterminate complex tubal vector} if 
    \[\widehat{\mathcal{X}}_{(\ell)} = x^{\ell}\qquad \forall \ell\in [p],\]
    with $\widehat{\mathcal{X}}$ obtained after FFT applied to the tubal mode of $\mathcal{X}$ and $x^{\ell} = \left[\begin{array}{ccc}
        x_1^{\ell} & \dots & x_n^{\ell}
    \end{array}\right]^T$ an $n$-dimensional vector of complex variables. That is, in the frequency domain each of the columns of $\mathcal{X}$ are vectors of complex variables. 
\end{defn}
\begin{defn}[t-Multilinear Hypermatrix Product]
    Let $N\geq 2$ and for each $j\in [N]$, let $\mathcal{X}^j$ be an $n_j\times p$ indeterminate complex tubal vector. Suppose also that $\mathcal{A}\in \mathbb{C}^{n_1\times\dots \times n_N\times p}$ is an order $N+1$ hypermatrix. The \textbf{(right) t-multilinear hypermatrix product} of $\mathcal{A}$ with the tuple $(\mathcal{X}^1,...,\mathcal{X}^N)$ is given by the $p$-dimensional tube
    \[\mathcal{A}*_t(\mathcal{X}^1,...,\mathcal{X}^N) := \sum\limits_{i_1,\dots,i_N=1}^{n_1,\dots,n_N}a_{i_1\dots i_N}\circledast \mathcal{X}^1[i_1,:]\circledast\dots\circledast\mathcal{X}^N[i_N,:]\]
    with "$\circledast$" denote the discrete circular convolution operator. 
\end{defn}
\begin{remark}\label{t-multilinear hypermatrix product in frequency domain}
    Denote $\mathcal{A}' := \mathcal{A}*_t(\mathcal{X}^1,\dots,\mathcal{X}^N)$. Then viewing the tubal vectors $\mathcal{X}^j$ as $n_j\times 1\times p$ order $3$ hypermatrices, from equations \eqref{tubewise t-product} and \eqref{tubewise t-product in frequency domain}, it follows that 
    \[\mathrm{fft}(\mathcal{A}')[\ell] = \sum\limits_{i_1,\dots,k_i=1}^{n_1,\dots,n_N}\widehat{\mathcal{A}}[i_1,\dots,i_N,\ell]\widehat{\mathcal{X}^1}[i_1,1,\ell]\cdot...\cdot\widehat{\mathcal{X}^N}[i_N,1,\ell]\qquad \forall \ell\in [p],\]
    with $\widehat{\mathcal{A}}:=\mathrm{fft}_{2k+1}(\mathcal{A})$ and $\widehat{\mathcal{X}^j}:=\mathrm{fft}_3(\mathcal{X}^j)$ for each $j\in [N]$. Consequently, by definition 
    \begin{equation}\label{slicewise t-MMM}
        \mathrm{fft}(\mathcal{A}')[\ell] = \widehat{\mathcal{A}}_{(\ell)}*\left(\widehat{\mathcal{X}^1}_{(\ell)},...,\widehat{\mathcal{X}^N}_{(\ell)}\right)
    \end{equation}
    with "$*$" denoting regular multilinear matrix multiplication, for each $\ell\in [p]$. 
\end{remark}
\begin{defn}[t-Conjugate Partially Symmetric]
    Let $\mathcal{A}\in \mathbb{C}^{n\times...\times n\times p}$ be an order $2k+1$ hypermatrix and define $\widehat{\mathcal{A}} := \mathrm{fft}_{2k+1}(\mathcal{A})$. We say that $\mathcal{A}$ is \textbf{t-conjugate partially symmetric} if each frontal slice in the frequency domain, $\widehat{\mathcal{A}}_{(\ell)}$, is a conjugate partially symmetric hypermatrix.  
\end{defn}
\begin{defn}
    Let $\mathcal{A}\in \mathbb{C}^{n\times\dots\times n\times p}$ be an order $2k+1$ $t$-conjugate partially symmetric hypermatrix and $\mathcal{X}$ be an $n\times p$ indeterminate complex tubal vector, which we may regard as an $n\times 1\times p$ order $3$ hypermatrix. The \textbf{degree $\boldsymbol{k}$ t-Hermitian form associated with $\boldsymbol{\mathcal{A}}$} is defined as the $t$-multilinear hypermatrix product
    \[h_{\mathcal{A}}(\mathcal{X}) := \mathcal{A}*_t(\underbrace{J(\mathcal{X}),...,J(\mathcal{X})}_{k\text{ times}},\overbrace{\mathcal{X},...,\mathcal{X}}^{k\text{ times}}).\] 
\end{defn}
Since $\mathcal{A}$ is assumed to be $t$-conjugate partially symmetric, each frontal slice of $\widehat{\mathcal{A}}$ is conjugate partially symmetric, hence by equation \eqref{slicewise t-MMM}, in the frequency domain each coordinate of $h_{\mathcal{A}}(\mathcal{X})$ defines a unique degree $k$ Hermitian form. Formally, we have the following fact:
\begin{fact}\label{Degree k t-Hermitian form as a collection of degree k Hermitian forms}
    After applying the FFT along tubal modes of $\mathcal{A}$, $\mathcal{X}$, and $J(\mathcal{X})$, a degree $k$ $t$-Hermitian form decomposes into a collection of classical degree $k$ Hermitian forms; that is, 
    \[\widehat{h_{\mathcal{A}}(\mathcal{X})}[\ell] = \widehat{\mathcal{A}}_{(\ell)}*\left(\underbrace{\overline{x^{\ell}},\dots,\overline{x^{\ell}}}_{k\text{ times}},\overbrace{x^{\ell},\dots,x^{\ell}}^{k\text{ times}}\right)\qquad \forall \ell\in [p],\]
    with $\widehat{h_{\mathcal{A}}(\mathcal{X})} := \mathrm{fft}\big(h_{\mathcal{A}}(\mathcal{X})\big)$. 
\end{fact}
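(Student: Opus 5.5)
The plan is to push the FFT through the $t$-multilinear hypermatrix product and then use the two frequency-domain facts about the tubal vectors. The identity is a direct consequence of equation \eqref{slicewise t-MMM} in Remark \ref{t-multilinear hypermatrix product in frequency domain}, combined with Fact \ref{Fourier conjugation in frequency domain} and the definition of an indeterminate complex tubal vector.

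First, I apply Remark \ref{t-multilinear hypermatrix product in frequency domain} with $N=2k$ and with the tuple $(\mathcal{X}^1,\dots,\mathcal{X}^{2k}) = (J(\mathcal{X}),\dots,J(\mathcal{X}),\mathcal{X},\dots,\mathcal{X})$. Each tubal vector is regarded as an $n\times 1\times p$ order $3$ hypermatrix. This gives, for each $\ell\in[p]$,
\[\widehat{h_{\mathcal{A}}(\mathcal{X})}[\ell] = \widehat{\mathcal{A}}_{(\ell)}*\left(\widehat{J(\mathcal{X})}_{(\ell)},\dots,\widehat{J(\mathcal{X})}_{(\ell)},\widehat{\mathcal{X}}_{(\ell)},\dots,\widehat{\mathcal{X}}_{(\ell)}\right).\]
The only genuine content is that the FFT converts iterated circular convolutions into Hadamard products. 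Since the convolution theorem holds for each pairwise convolution, an induction on the number of factors in $a_{i_1\dots i_{2k}}\circledast \mathcal{X}^1[i_1,:]\circledast\dots\circledast\mathcal{X}^{2k}[i_{2k},:]$, together with linearity of $\mathrm{fft}$ over the outer sum, justifies the remark as stated.

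Second, I identify the slices. By definition $\widehat{\mathcal{X}}_{(\ell)} = x^{\ell}$. Fact \ref{Fourier conjugation in frequency domain}, applied to $\mathcal{X}$ viewed as an $n\times 1\times p$ hypermatrix, gives $\widehat{J(\mathcal{X})}_{(\ell)} = \overline{\widehat{\mathcal{X}}_{(\ell)}} = \overline{x^{\ell}}$, where conjugation of an indeterminate is the formal conjugation used in Section \ref{The Different Representations of Classical Hermitian Forms}. Substituting these into the display above yields exactly the stated formula.

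Third, I check that each slice really is a classical degree $k$ Hermitian form. Since $\mathcal{A}$ is $t$-conjugate partially symmetric, each $\widehat{\mathcal{A}}_{(\ell)}$ is an order $2k$ conjugate partially symmetric hypermatrix. By Fact \ref{Hermitian Form Bijection}, the polynomial $\widehat{\mathcal{A}}_{(\ell)}*(\overline{x^{\ell}},\dots,\overline{x^{\ell}},x^{\ell},\dots,x^{\ell})$ is therefore the degree $k$ Hermitian form uniquely associated with $\widehat{\mathcal{A}}_{(\ell)}$, in the variables $x^{\ell}$.

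The main obstacle is the formal bookkeeping in the second step. Fourier conjugation is defined on numeric tensors, but here $\mathcal{X}$ is indeterminate. I would handle this by noting that $\mathcal{X} = \mathrm{ifft}(\widehat{\mathcal{X}})$ is a tube of linear forms in the $x^{\ell}_i$. $J$ acts by entrywise formal conjugation plus an index reversal, so the proof of Fact \ref{Fourier conjugation in frequency domain}, which uses only the identity $\overline{\omega_p^{m}} = \omega_p^{-m}$ and reindexing, carries over verbatim to the formal setting. Alternatively, one can evaluate at an arbitrary $z\in(\mathbb{C}^n)^p$ and note that both sides agree as polynomials because they agree at every evaluation point.
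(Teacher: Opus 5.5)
Your proposal is correct and follows essentially the same route as the paper. The paper justifies this fact in one line, combining the slicewise frequency-domain formula for the $t$-multilinear product (equation \eqref{slicewise t-MMM}) with $t$-conjugate partial symmetry of $\mathcal{A}$, and it uses $\widehat{J(\mathcal{X})}_{(\ell)} = \overline{x^{\ell}}$ implicitly, as it does explicitly in the degree $1$ example. Your remarks on the iterated convolution theorem and on applying $J$ to indeterminates (or arguing by evaluation) just make explicit the bookkeeping the paper leaves tacit.
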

Since degree $k$ Hermitian forms uniquely corresponds to an order $2k$ conjugate partially symmetric hypermatrices (Fact \ref{Hermitian Form Bijection}), by definition each degree $k$ $t$-Hermitian form uniquely corresponds to an order $2k+1$ $t$-conjugate partially symmetric hypermatrix. Formally, we summarize with the following fact. 
\begin{fact}\label{t-Hermitian Form Bijection}
    There is a bijection between order $2k+1$ $t$-conjugate partially symmetric hypermatrices $\mathcal{A}\in \mathbb{C}^{n\times\dots\times n\times p}$ and degree $k$ $t$-Hermitian forms in the $n\times p$ indeterminate complex tubal vector $\mathcal{X}$, and in particular it is given by the map 
    \[\mathcal{A}\mapsto h_{\mathcal{A}}(\mathcal{X}) := \mathcal{A}*_t\big(J(\mathcal{X}),\dots,J(\mathcal{X}),\mathcal{X},\dots,\mathcal{X}\big)\]
\end{fact}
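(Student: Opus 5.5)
The plan is to pass to the frequency domain, where everything decouples slicewise, and then invoke Fact \ref{Hermitian Form Bijection} once for each frontal slice. Concretely, the map $\mathcal{A}\mapsto h_{\mathcal{A}}$ factors as a composition of three bijections, and I would verify each one in turn.

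The first bijection is $\mathcal{A}\mapsto \widehat{\mathcal{A}} := \mathrm{fft}_{2k+1}(\mathcal{A})$. It identifies order $2k+1$ $t$-conjugate partially symmetric hypermatrices with $p$-tuples $(\widehat{\mathcal{A}}_{(1)},\dots,\widehat{\mathcal{A}}_{(p)})$ of order $2k$ conjugate partially symmetric hypermatrices. This holds by the very definition of $t$-conjugate partial symmetry, together with invertibility of the FFT along the tubal mode; note that no reality constraint couples different slices. The second bijection applies Fact \ref{Hermitian Form Bijection} slicewise, sending each $\widehat{\mathcal{A}}_{(\ell)}$ to the classical degree $k$ Hermitian form
\[h_\ell(x^\ell) := \widehat{\mathcal{A}}_{(\ell)}*(\overline{x^\ell},\dots,\overline{x^\ell},x^\ell,\dots,x^\ell).\]
This gives a bijection onto $p$-tuples $(h_1,\dots,h_p)$ of degree $k$ Hermitian forms, where $h_\ell$ is in the independent variables $x^\ell$. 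The third bijection is $(h_1,\dots,h_p)\mapsto \mathrm{ifft}(h_1,\dots,h_p)$, a tube of polynomials in the variables $\{x^\ell_i\}$; it is invertible because the FFT is.

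It remains to check that this composite equals $\mathcal{A}\mapsto h_{\mathcal{A}}(\mathcal{X})$. For that I would use Fact \ref{Degree k t-Hermitian form as a collection of degree k Hermitian forms}, which rests on Remark \ref{t-multilinear hypermatrix product in frequency domain} and Fact \ref{Fourier conjugation in frequency domain}. The latter gives $\widehat{J(\mathcal{X})}_{(\ell)}=\overline{x^\ell}$, and the former then gives $\widehat{h_{\mathcal{A}}(\mathcal{X})}[\ell]=h_\ell(x^\ell)$.

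Surjectivity onto the set of degree $k$ $t$-Hermitian forms is then immediate, since by definition every such form is $h_{\mathcal{A}}$ for some $t$-conjugate partially symmetric $\mathcal{A}$.

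The only real content is injectivity, and this is the step I expect to need the most care. Suppose $h_{\mathcal{A}}=h_{\mathcal{B}}$ as formal tubes of polynomials. Applying the FFT gives, for each $\ell$, the identity
\[\widehat{\mathcal{A}}_{(\ell)}*(\overline{x^\ell},\dots,x^\ell)=\widehat{\mathcal{B}}_{(\ell)}*(\overline{x^\ell},\dots,x^\ell)\]
as mixed polynomials in $x^\ell$. The subtle point is that equality must be read formally, in the variables $x^\ell,\overline{x^\ell}$ treated as independent, or equivalently as functions on $\mathbb{C}^n$. In either reading, the uniqueness part of Fact \ref{Hermitian Form Bijection} gives $\widehat{\mathcal{A}}_{(\ell)}=\widehat{\mathcal{B}}_{(\ell)}$ for each $\ell$. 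Because the $x^\ell$ for different $\ell$ are distinct variables, the slices never mix. Applying $\mathrm{ifft}_{2k+1}$ then yields $\mathcal{A}=\mathcal{B}$, which completes the argument.
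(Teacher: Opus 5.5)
Your proposal is correct and follows essentially the paper's route: it decomposes $h_{\mathcal{A}}$ slicewise in the frequency domain via Fact \ref{Degree k t-Hermitian form as a collection of degree k Hermitian forms} and applies the classical bijection of Fact \ref{Hermitian Form Bijection} to each $\widehat{\mathcal{A}}_{(\ell)}$, with surjectivity holding by definition. Your explicit injectivity step (slicewise uniqueness, then $\mathrm{ifft}_{2k+1}$) simply spells out what the paper compresses into ``by definition.''
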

\begin{example}[Degree $1$ $t$-Hermitian Forms]\label{Degree 1 t-Hermitian Forms}
    In \cite{qi2021t}, L. Qi and X. Zhang define the \textbf{t-quadratic form} of a t-symmetric hypermatrix $\mathcal{A}\in \mathbb{R}^{n\times n\times p}$ (t-symmetric meaning $\mathcal{A} = \mathcal{A}^T$, where $T$ similarly defined as $H$ in Definition \ref{t-conjugate transpose}) as the $t$-product of order $3$ hypermatrices
    \[\mathcal{X}^T*_t\mathcal{A}*_t\mathcal{X},\]
    with $\mathcal{X}$ is an indeterminate complex tubal vector viewed as an $n\times 1\times p$ order $3$ hypermatrix. In the degree $k=1$ case, each $t$-Hermitian hypermatrix $\mathcal{A}\in \mathbb{C}^{n\times n\times p}$ uniquely defines the $t$-Hermitian form 
    \[h_{\mathcal{A}}(\mathcal{X}) = \mathcal{A}*_t(J(\mathcal{X}),\mathcal{X})\]
    with $\mathcal{X}$ an indeterminate complex tubal vector of indeterminates, and we claim that similarly 
    \[\mathcal{A}*_t(J(\mathcal{X}),\mathcal{X}) = \mathcal{X}^H*_t\mathcal{A}*_t\mathcal{X}\]
    with $H$ denoting the $t$-conjugate transpose from Definition \ref{t-conjugate transpose} and $*_t$ on the right-hand side denoting the usual $t$-product on order $3$ hypermatrices. That is, we obtain a complex $t$-analogue of Equation \eqref{Relationship between MM and MMM}. 
    
    To this end, first note that if $A$, $B$, and $C$ are $n\times n$ matrices, then 
    \begin{equation}\label{triple matrix product}
        (ABC)_{ij} = \sum\limits_{k_1,k_2=1}^nA_{ik_1}B_{k_1k_2}C_{k_2j}
    \end{equation}
    for each $i,j$. Therefore setting $\mathcal{A}' := \mathcal{A}*_t\left(J(\mathcal{X}),\mathcal{X})\right)$, by the definition of t-multilinear hypermatrix multiplication/Fact \ref{t-multilinear hypermatrix product in frequency domain}, for each $\ell\in [p]$ we have that 
    \begin{align*}
        \widehat{\mathcal{A}'}[\ell] &= \sum\limits_{k_1,k_2=1}^n\widehat{\mathcal{A}}[k_1,k_2,\ell]\widehat{J(\mathcal{X})}[k_1,1,\ell]\widehat{\mathcal{X}}[k_2,1,\ell] \\
        &= \sum\limits_{k_1,k_2=1}^n\overline{\widehat{\mathcal{X}}[k_1,1,\ell]}\widehat{\mathcal{A}}[k_1,k_2,\ell]\widehat{\mathcal{X}}[k_2,1,\ell],\quad \text{by Fact }\eqref{Fourier conjugation in frequency domain} \\
        &= \sum\limits_{k_1,k_2=1}^n\left(\widehat{\mathcal{X}}\right)^H[1,k_1,\ell]\widehat{\mathcal{A}}[k_1,k_2,\ell]\widehat{\mathcal{X}}[k_2,1,\ell] \\
        &= \sum\limits_{k_1,k_2=1}^n\widehat{\mathcal{X}^H}[1,k_1,\ell]\widehat{\mathcal{A}}[k_1,k_2,\ell]\widehat{\mathcal{X}}[k_2,1,\ell],\quad \text{by equation }\eqref{conjugate-transpose in frequency domain}.
    \end{align*}
    Therefore by equation \eqref{triple matrix product}, for each $l\in [p]$ we have that
    \[\widehat{\mathcal{A}'}[\ell] = \widehat{\mathcal{X}^H}_{(\ell)}\widehat{\mathcal{A}}_{(\ell)}\widehat{\mathcal{X}}_{(\ell)} = \widehat{\left(\mathcal{X}^H*_t\mathcal{A}*_t\mathcal{X}\right)}[\ell],\]
    where the second equation is obtain by applying Fact \ref{t-product in frequency domain slicewise reduces to matrix multiplication} twice. 
    Thus, 
    \[\widehat{\mathcal{A}'} = \widehat{\left(\mathcal{X}^H*_t\mathcal{A}*_t\mathcal{X}\right)},\]
    and so by applying the inverse FFT on both sides we obtain 
    \[\mathcal{A}' = \mathcal{X}^H*_t\mathcal{A}*_t\mathcal{X}.\]
\end{example}

\subsubsection{Intrinsic Definition of $t$-Hermitian Forms}
The $t$-product allows tubal vectors to be viewed as vectors over an algebra of tubal scalars; see \cite{braman2010third,kilmer2013third} for an in depth treatment of this module-theoretic interpretation of the $t$-product. More generally, sesquilinear forms and Hermitian forms may be defined as ring/algebra-valued maps on modules over said rings/algebras equipped with involution; see for instance \cite{bayer2014sesquilinear}. Here, we combine this standard algebra-valued viewpoint with the higher bi-degree construction of sesquilinear forms provided in Subsection \ref{High-Level Definition of Hermitian Forms} to give an intrinsic definition of $t$-Hermitian forms. 

Let $\mathbb{T}_p := \mathbb{C}^{1\times 1\times p}\cong \mathbb{C}^p$ denote the space of $p$-dimensional tubes. For tubes, the $t$-product reduces to the discrete circular convolution operator: 
\[a*_tb = a\circledast b\qquad \forall a,b\in \mathbb{T}_p.\]
Equipped with this product, the unit $\delta := (1,0,\dots,0)$, and the involution $a^* := J(a)$ (from Definition \ref{Fourier conjugation}, viewing $a$ as a $1\times 1\times p$ order $3$ hypermatrix), the space $\mathbb{T}_p$ is a commutative unital $^*$-algebra. Therefore we may consider the free rank-$n$ module $\mathbb{T}_p^n \cong \mathbb{C}^{n\times 1\times p}$ over $\mathbb{T}_p$. 
\begin{defn}
    A \textbf{$\boldsymbol{\mathbb{T}_p}$-valued $\boldsymbol{(k,k)}$-sesquilinear form} is a map 
    \[S:\overbrace{\mathbb{T}_p^n\times\dots\times \mathbb{T}_p^n}^{2k\text{ times}}\rightarrow \mathbb{T}_p\]
    that is $\mathbb{T}_p$-conjugate linear in its first $k$ arguments and $\mathbb{T}_p$-linear in its last $k$ arguments; that is, 
    for any $a,b\in \mathbb{T}_p$ and $\mathcal{U}_i,\mathcal{V}_i,\mathcal{W}_i\in \mathbb{T}_p^n$, 
    \begin{align*}
        &S_{\mathcal{A}}(\mathcal{U}_1,\dots,\mathcal{U}_i*_ta+\mathcal{W}_i*_tb,\dots,\mathcal{U}_k,\mathcal{V}_1,\dots,\mathcal{V}_k) \\
        &\quad = J(a)*_tS_{\mathcal{A}}(\mathcal{U}_1,\dots,\mathcal{U}_i,\dots\mathcal{U}_k,\mathcal{V}_1,\dots,\mathcal{V}_k) \\
        &\quad + J(b)*_tS_{\mathcal{A}}(\mathcal{U}_1,\dots,\mathcal{W}_i,\dots\mathcal{U}_k,\mathcal{V}_1,\dots,\mathcal{V}_k)
    \end{align*}
    and 
    \begin{align*}
        &S_{\mathcal{A}}(\mathcal{U}_1,\dots,\mathcal{U}_k,\mathcal{V}_1,\dots\mathcal{V}_i*_ta+\mathcal{W}_i*_tb,\dots,\mathcal{V}_k) \\
        &\quad = a*_tS_{\mathcal{A}}(\mathcal{U}_1,\dots\mathcal{U}_k,\mathcal{V}_1,\dots,\mathcal{V}_i,\dots\mathcal{V}_k) \\
        &\quad + b*_tS_{\mathcal{A}}(\mathcal{U}_1,\dots\mathcal{U}_k,\mathcal{V}_1,\dots,\mathcal{W}_i,\dots,\mathcal{V}_k).
    \end{align*}
    We say that $S$ is \textbf{separately symmetric} if 
    \[S(\mathcal{U}_{\sigma(1)},\dots,\mathcal{U}_{\sigma(k)};\mathcal{V}_{\tau(1)},\dots,\mathcal{V}_{\tau(k)}) = S(\mathcal{U}_1,\dots,\mathcal{U}_k;\mathcal{V}_1,\dots,\mathcal{V}_k)\]
    for all $\mathcal{U}_1,\dots,\mathcal{U}_k,\mathcal{V}_1,\dots,\mathcal{V}_k\in \mathbb{T}_p^n$ and every $\sigma,\tau\in S_k$. We say that $S$ is \textbf{$\boldsymbol{t}$-Hermitian} if 
    \[J\big(S(\mathcal{V}_1,\dots,\mathcal{V}_k;\mathcal{U}_1,\dots,\mathcal{U}_k)\big) = S(\mathcal{U}_1,\dots,\mathcal{U}_k;\mathcal{V}_1,\dots,\mathcal{V}_k)\]
    for all $\mathcal{U}_1,\dots,\mathcal{U}_k,\mathcal{V}_1,\dots,\mathcal{V}_k\in \mathbb{T}_p^n$. 
\end{defn}
\begin{remark}
    As in the classical setting, separate symmetry allows $S$ to be viewed as an ordinary two-argument sesquilinear form 
    \[\widetilde{S}:\mathrm{Sym}^k(\mathbb{T}_p^n)\times \mathrm{Sym}^k(\mathbb{T}_p^n)\rightarrow \mathbb{T}_p\]
    satisfying 
    \[\widetilde{S}(\mathcal{U}_1\odot\dots\odot \mathcal{U}_k; \mathcal{V}_1\odot\dots\odot \mathcal{V}_k) = S(\mathcal{U}_1,\dots,\mathcal{U}_k; \mathcal{V}_1,\dots,\mathcal{V}_k)\]
    with $\odot$ denoting the symmetric tensor product. Moreover, $S$ is Hermitian if and only if $\widetilde{S}$ is Hermitian in the usual two-argument sense with respect to the involution $J$. 
\end{remark}
\begin{defn}
    Let $S$ be a separately symmetric $t$-Hermitian $\mathbb{T}_p$-valued $(k,k)$-sesquilinear form on $\mathbb{T}_p^n$. The map 
    \begin{align*}
        h_S:\mathbb{T}_p^n&\longrightarrow \mathbb{T}_p \\
        \mathcal{V}&\mapsto S(\mathcal{V},\dots,\mathcal{V};\mathcal{V},\dots,\mathcal{V})
    \end{align*}
    is called the \textbf{degree $\boldsymbol{k}$ $\boldsymbol{t}$-Hermitian form} associated with $S$. 
\end{defn}
\begin{remark}
    Equivalently, in terms of the induced Hermitian sesquilinear form $\widetilde{S}$ on $\mathrm{Sym}^k(\mathbb{T}_p^n)$, 
    \[h_S(\mathcal{V}) = \widetilde{S}(\mathcal{V}^{\odot k},\mathcal{V}^{\odot k}).\]
    Note also that the $t$-Hermitian property of $S$ implies that 
    \[J\big(h_S(\mathcal{V})\big) = h_S(\mathcal{V})\]
    which by Fact \ref{Fourier conjugation in frequency domain} is implies that  
    \[\widehat{h_S(\mathcal{V})}[\ell]\in \mathbb{R}\]
    for each $\ell\in [p]$. 
\end{remark}
\begin{proposition}
    Let $\{\mathcal{E}_1,\dots,\mathcal{E}_n\}$ denote the standard ordered basis for $\mathbb{T}_p^n$ (i.e. $\mathcal{E}_i$ is the tubal vector with $\delta$ in $i^{th}$-coordinate and the $0$ tube for all of its other coordinates). Also, for a separately symmetric $t$-Hermitian $\mathbb{T}_p$-valued $(k,k)$-sesquilinear form $S$, define the tube 
    \[a_{i_1\dots i_k j_1\dots j_k} := S(\mathcal{E}_{i_1},\dots,\mathcal{E}_{i_k};\mathcal{E}_{j_1},\dots,\mathcal{E}_{j_k})\]
    and let $\mathcal{A}$ denote the order $2k+1$ $n\times\dots\times n\times p$ hypermatrix such that 
    \[\mathcal{A}[i_1,\dots,i_k,j_1,\dots,j_k,:] = a_{i_1\dots i_k j_1\dots j_k}\]
    for each $i_1,\dots,i_k,j_1,\dots,j_k\in [n]$. Then 
    \[S(\mathcal{U}_1,\dots,\mathcal{U}_k;\mathcal{V}_1,\dots,\mathcal{V}_k) = \mathcal{A}*_t\big(J(\mathcal{U}_1),\dots,J(\mathcal{U}_k);\mathcal{V}_1,\dots,\mathcal{V}_k\big),\]
    and so consequently $\mathcal{A}$ is $t$-conjugate partially symmetric. Conversely, every order $2k+1$ $t$-conjugate partially symmetric hypermatrix $\mathcal{A}\in \mathbb{C}^{n\times\dots\times n\times p}$ defines a unique separately symmetric $t$-Hermitian $\mathbb{T}_p$-valued $(k,k)$-sesquilinear form. 
\end{proposition}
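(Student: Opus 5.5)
The plan is to expand each argument in the standard basis and use multi-sesquilinearity, mirroring Remark \ref{Properties of Degree k Hermitian Forms}. Any $\mathcal{U}\in\mathbb{T}_p^n$ can be written uniquely as $\mathcal{U}=\sum_{i=1}^n \mathcal{E}_i*_t u_i$, where $u_i=\mathcal{U}[i,1,:]\in\mathbb{T}_p$; this holds because $\delta$ is the unit for $\circledast$. Apply this expansion to every $\mathcal{U}_r$ and $\mathcal{V}_s$, and pull the tubal coefficients out. Conjugate-linearity in the first $k$ slots contributes $J(u^r_{i_r})$, and linearity in the last $k$ slots contributes $v^s_{j_s}$. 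Since $\mathbb{T}_p$ is commutative, we obtain
\[S(\mathcal{U}_1,\dots,\mathcal{U}_k;\mathcal{V}_1,\dots,\mathcal{V}_k)=\sum_{i_1,\dots,j_k} a_{i_1\dots i_kj_1\dots j_k}\circledast J(u^1_{i_1})\circledast\cdots\circledast J(u^k_{i_k})\circledast v^1_{j_1}\circledast\cdots\circledast v^k_{j_k}.\]
To identify this with the $t$-multilinear product, note that $J(\mathcal{U})[i,:]=J(u_i)$, since $J$ acts tube-wise as a reindexing plus conjugation. The right-hand side is therefore exactly $\mathcal{A}*_t(J(\mathcal{U}_1),\dots,J(\mathcal{U}_k);\mathcal{V}_1,\dots,\mathcal{V}_k)$ by definition.

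Next, the symmetry properties of $S$ transfer to $\mathcal{A}$. Separate symmetry of $S$, evaluated on basis vectors, gives $a_{\sigma\cdot\mathbf{i},\tau\cdot\mathbf{j}}=a_{\mathbf{i}\mathbf{j}}$ as tubes. The $t$-Hermitian property gives $J(a_{\mathbf{j}\mathbf{i}})=a_{\mathbf{i}\mathbf{j}}$, using $J(\mathcal{E}_i)=\mathcal{E}_i$ because $J(\delta)=\delta$. We then apply $\mathrm{fft}$ to these tube identities. The fft is linear and acts tube-wise, so the permutation invariance passes through unchanged. For the conjugation relation, Fact \ref{Fourier conjugation in frequency domain}, applied to the $1\times1\times p$ tube, gives $\mathrm{fft}(J(a))[\ell]=\overline{\mathrm{fft}(a)[\ell]}$. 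Combining these, every slice $\widehat{\mathcal{A}}_{(\ell)}$ satisfies $\overline{\widehat a_{\tau\cdot\mathbf{j},\sigma\cdot\mathbf{i}}}=\widehat a_{\mathbf{i}\mathbf{j}}$, which is conjugate partial symmetry. So $\mathcal{A}$ is $t$-conjugate partially symmetric.

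For the converse, given a $t$-conjugate partially symmetric $\mathcal{A}$, define $S_{\mathcal{A}}(\mathcal{U};\mathcal{V}):=\mathcal{A}*_t(J(\mathcal{U}_1),\dots;\mathcal{V}_1,\dots)$ and check the axioms slice-wise in the frequency domain using Remark \ref{t-multilinear hypermatrix product in frequency domain}. Slice by slice this is $\widehat{\mathcal{A}}_{(\ell)}*(\overline{\hat u^1},\dots,\hat v^k)$. That expression is a classical $(k,k)$-sesquilinear form; it is separately symmetric and Hermitian because $\widehat{\mathcal{A}}_{(\ell)}$ is conjugate partially symmetric. Tubal scalars act slice-wise as multiplication by $\hat a[\ell]$, and $J$ acts as slice-wise conjugation. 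Hence $\mathbb{T}_p$-(conjugate-)linearity, separate symmetry, and the $t$-Hermitian identity all follow slice by slice, and the inverse fft transfers them back. Uniqueness holds because $S_{\mathcal{A}}(\mathcal{E}_{i_1},\dots;\dots,\mathcal{E}_{j_k})$ recovers the tube $a_{i_1\dots j_k}$. Together with the forward direction, this makes $S\leftrightarrow\mathcal{A}$ a bijection.

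The main obstacle is bookkeeping with the involution $J$. One must confirm that $J$ on an $n\times 1\times p$ tubal vector equals entry-wise $J$ on its tubes, that $J$ is multiplicative for $\circledast$, and that $J(\delta)=\delta$. I would verify these quickly in the frequency domain using Fact \ref{Fourier conjugation in frequency domain}, since there $J$ is just slice-wise complex conjugation.
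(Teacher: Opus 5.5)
Your proposal is correct and follows essentially the same route as the paper. You expand each argument in the standard basis $\{\mathcal{E}_i\}$ and use $\mathbb{T}_p$-(conjugate-)linearity to identify $S$ with $\mathcal{A}*_t(J(\mathcal{U}_1),\dots;\mathcal{V}_1,\dots)$. You transfer separate symmetry and the $t$-Hermitian identity on basis tubes to the frequency domain via Fact~\ref{Fourier conjugation in frequency domain}. You get the converse by checking the axioms slice-wise and inverting the FFT. Your extra bookkeeping fills in details the paper leaves implicit: that $J$ acts tube-wise, is multiplicative, and fixes $\delta$, which you use for uniqueness.
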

\begin{proof}
    For each $r,s\in [k]$, we may write 
    \[\mathcal{U}_r = \sum\limits_{i=1}^n\mathcal{U}_r[i,:]*_t\mathcal{E}_i\quad\text{and}\quad \mathcal{V}_s = \sum\limits_{i=1}^n\mathcal{V}_s[i,:]*_t\mathcal{E}_i.\]
    Since among tubes the $t$-product reduces to the discrete circular convolution operator $\circledast$, by $\mathbb{T}_p$-conjugate linearity in the first $k$ arguments and $\mathbb{T}_p$-linearity in the last $k$ arguments, it follows that  $S(\mathcal{U}_1,\dots,\mathcal{U}_k;\mathcal{V}_1,\dots,\mathcal{V}_k)$ is equal to the sum 
    \[\sum\limits_{\substack{i_1,\dots,i_k = 1 \\ j_1,\dots,j_k=1}}^na_{i_1\dots i_k j_1\dots j_k}\circledast J(\mathcal{U}_1[i_1,:])\circledast\dots \circledast J(U_k[i_k,:])\circledast \mathcal{V}_1[j_1,:]\circledast\dots \circledast \mathcal{V}_k[j_k,:]\]
    which is precisely the $t$-multilinear hypermatrix product of $\mathcal{A}$ with $(J(\mathcal{U}_1),\dots,J(\mathcal{U}_k),\mathcal{V}_1,\dots,\mathcal{V}_k)$ (viewing $\mathcal{U}_1,\dots,\mathcal{U}_k,\mathcal{V}_1,\dots,\mathcal{V}_k$ as $n\times 1\times p$ order $3$ hypermatrices). Applying separate symmetry and $t$-Hermitian-ness to the basis vectors $\mathcal{E}_1,\dots,\mathcal{E}_n$, it follows that 
    \[J(a_{j_{\sigma(1)}\dots j_{\sigma(k)}}i_{\tau(1)}\dots i_{\tau(k)}) = a_{i_1\dots i_k j_1\dots j_k}\]
    for all $i_1,\dots,i_k,j_1,\dots,j_k\in [n]$ and every $\sigma,\tau\in S_k$, which is equivalent to 
    \[\overline{\widehat{a}_{j_{\sigma(1)}\dots j_{\sigma(k)}i_{\tau(1)}\dots i_{\tau(k)}}[\ell]} = \widehat{a}_{i_1\dots i_k j_1\dots j_k}[\ell]\qquad \forall \ell\in [p]\]
    for all $i_1,\dots,i_k,j_1,\dots,j_k\in [n]$ and every $\sigma,\tau\in S_k$, thus $\mathcal{A}$ is $t$-conjugate partially symmetric. 

    The converse straightforwardly follows by reversing the argument. In particular, conjugate partial symmetry of each of the frontal slices in the frequency domain implies, after applying the inverse FFT, both separate symmetry and the $t$-Hermitian identity of the induced form. 
\end{proof}
That is, every $\mathbb{T}_p$-valued separately symmetric $t$-Hermitian $(k,k)$-sesquilinear form determines a unique order $2k+1$ $t$-conjugate partially symmetric hypermatrix $\mathcal{A}\in \mathbb{C}^{n\times\dots\times n\times p}$, and vice versa. Since degree $k$ $t$-Hermitian forms are the diagonal of $\mathbb{T}_p$-valued separately symmetric $t$-Hermitian $(k,k)$-sesquilinear forms, every order $2k+1$ $t$-conjugate partially symmetric hypermatrix $\mathcal{A}\in \mathbb{C}^{n\times\dots\times n\times p}$ then defines a unique degree $k$ $t$-Hermitian form. 

The Fourier transform gives a particularly clean interpretation. By construction, for an order $2k+1$ $t$-conjugate partially symmetric hypermatrix $\mathcal{A}$, each frontal slice of $\mathcal{A}$ in the frequency domain, $\widehat{\mathcal{A}}_{(\ell)}$, defines an ordinary separately symmetric Hermitian $(k,k)$-sesquilinear form 
\[S_{\ell}:\overbrace{\mathbb{C}^n\times\dots\times\mathbb{C}^n}^{2k\text{ times}}\rightarrow \mathbb{C}\]
which satisfies
\begin{align*}
    \mathrm{fft}\big(S_{\mathcal{A}}(\mathcal{U}_1,\dots,\mathcal{U}_k;\mathcal{V}_1,\dots,\mathcal{V}_k)\big)[\ell] &= S_{\ell}\big((\widehat{U_1})_{(\ell)},\dots,(\widehat{U_k})_{(\ell)};(\widehat{\mathcal{V}_1})_{(\ell)},\dots,(\widehat{\mathcal{V}_k})_{(\ell)}\big) \\
    &= S_{\ell}(u_1^{\ell},\dots,u_k^{\ell};v_1^{\ell},\dots,v_k^{\ell})
\end{align*}
for some $u_1^{\ell},\dots,u_k^{\ell},v_1^{\ell},\dots,v_k^{\ell}\in \mathbb{C}^n$. In particular, on the diagonal, 
\[\mathrm{fft}\big(h_{\mathcal{A}}(\mathcal{V})\big)[\ell] = h_{\widehat{\mathcal{A}}_{(\ell)}}(\widehat{\mathcal{V}}_{(\ell)}) = h_{\widehat{\mathcal{A}}_{(\ell)}}(v^{\ell})\]
for some $v^{\ell}\in \mathbb{C}^n$, for each $\ell\in [p]$. Thus, under the $^*$-isomorphism
\begin{align*}
    \mathrm{fft}:(\mathbb{T}_p,*_t,J)&\longrightarrow (\mathbb{C}^p,\odot,\overline{(\cdot)}) \\
    a&\mapsto \mathrm{fft}(a) =: \widehat{a},
\end{align*}
a degree $k$ $t$-Hermitian form is equivalent to a collection of $p$ ordinary degree $k$ Hermitian forms; in particular, $t$-Hermitian forms are Fourier-packaged families of classical degree $k$ Hermitian forms. 

Conversely, given any collection of Hermitian degree $k$ forms $h_1,\dots,h_p$ in the complex variables $x_1,\dots,x_n$, by Fact \ref{Hermitian Form Bijection} each are uniquely represented by order $2k$ Hermitian conjugate symmetric hypermatrices $\mathcal{B}_1,\dots,\mathcal{B}_p$; setting $\mathcal{A}\in \mathbb{C}^{n\times\dots\times n\times p}$ to be the unique order $2k+1$ hypermatrix such that 
\[\mathcal{A}_{(\ell)} = \mathrm{ifft}(\mathcal{B}_{\ell}) \iff \widehat{\mathcal{A}}_{(\ell)} = \mathcal{B}_{\ell} \qquad \forall \ell\in [p],\]
we obtain a unique order $2k+1$ $t$-conjugate partially symmetric hypermatrix. 

Thus, in summary, $\mathbb{T}_p$-valued separately symmetric $t$-Hermitian $(k,k)$-sesquilinear forms, degree $k$ $t$-Hermitian forms, and order $2k+1$ $t$-Hermitian partially symmetric hypermatrices each uniquely determine one another. 

\subsection{Positive Definite t-Hermitian Forms}\label{Positive Definite t-Hermitian Forms}
Recall from above that the FFT yields an unital $^*$-algebra isomorphism 
\begin{align*}
    \mathrm{fft}:(\mathbb{T}_p,*_t,J)&\longrightarrow \big(\mathbb{C}^p,\odot,\overline{(\cdot)}\big) \\
    a&\mapsto \mathrm{fft}(a) := \widehat{a};
\end{align*}
in particular, 
\[\mathrm{fft}(\delta) = \mathbf{1},\quad \mathrm{fft}(a*_tb) = \widehat{a}\odot \widehat{b},\quad\text{and}\quad \mathrm{fft}(J(a)) = \overline{\widehat{a}}\]
(with $\delta := (1,0,\dots,0)$ and $\mathbf{1} := (1,1,\dots,1)$). 
\begin{defn}
    A tube $a\in \mathbb{T}_p$ is \textbf{Fourier positive} if $\widehat{a}[\ell]\geq 0$ for each $\ell\in [p]$. The \textbf{positive cone} of the tubal algebra $\mathbb{T}_p$ is the set 
    \[\mathbb{T}_p^+ := \{a\in \mathbb{T}_p:\widehat{a}[\ell]\geq 0\text{ for every }\ell\in [p]\}.\]
\end{defn}
\begin{remark}
    Note that $\mathbb{T}_p^+$ is a closed convex cone in the underlying real vector space of $\mathbb{T}_p$.
\end{remark} 
Let $\mathcal{A}\in \mathbb{C}^{n\times\dots\times n\times p}$ be a $t$-conjugate partially symmetric hypermatrix. From the intrinsic definition of $h_{\mathcal{A}}:\mathbb{T}_p^n\rightarrow \mathbb{T}_p$, we may view $t$-Hermitian forms as $\mathbb{T}_p$-valued Hermitian forms. A natural definition for positive $t$-Hermitian forms is therefore the following.
\begin{defn}
    Let $\mathcal{A}\in \mathbb{C}^{n\times\dots\times n\times p}$ be an order $2k+1$ $t$-conjugate partially symmetric hypermatrix. We say that corresponding $t$-Hermitian form $h_{\mathcal{A}}$ is \textbf{$\boldsymbol{t}$-Hermitian positive definite (semidefinite)} if 
    \[h_{\mathcal{A}}(\mathcal{Z})\in \mathbb{T}_p^+\setminus \{0\}\text{ }(\in \mathbb{T}_p^+)\]
    for all nonzero $\mathcal{Z}\in \mathbb{T}_p^n\cong \mathbb{C}^{n\times 1\times p}$. 
\end{defn}
\begin{defn}
    Let $x = \left[\begin{array}{ccc}
        x_1 & \dots & x_n
    \end{array}\right]^T$ be a vector of complex variables. We denote the indeterminate complex tubal vector $\mathcal{X}^{\Delta}$ such that 
    \[(\widehat{\mathcal{X}^{\Delta}})_{(\ell)} = x\qquad \forall \ell\in [p],\]
    and for any $z\in \mathbb{C}^n$, so that under evaluation only one vector input is required, and in particular
    \[\widehat{\mathrm{Ev}_z(\mathcal{X}^{\Delta})}_{(\ell)} = z\qquad \forall \ell\in [p].\]
    We will refer to such tubal vectors as \textbf{diagonal indeterminate complex tubal vectors}. 
\end{defn}
\begin{lemma}[Characterizing $t$-Hermitian Positivity]\label{Characterizing t-Hermitian Positivity}
    Let $\mathcal{A}\in \mathbb{C}^{n\times\dots\times n\times p}$ be an order $2k+1$ $t$-conjugate partially symmetric hypermatrix, and for any $z\in \mathbb{C}^n$ denote 
    \[\widehat{\mathcal{A}}_{(\ell)}*(\overline{z},\dots,\overline{z},z,\dots,z) := h_{\widehat{\mathcal{A}}_{(\ell)}}(z).\]
    Then 
    \[\widehat{h_{\mathcal{A}}\big(\mathrm{Ev}_z(\mathcal{X}^{\Delta})\big)}[\ell] = h_{\widehat{\mathcal{A}}_{(\ell)}}(z)\qquad \forall \ell\in [p].\]
    That is, $\widehat{h_{\mathcal{A}}\big(\mathrm{Ev}_z(\mathcal{X}^{\Delta})\big)}[\ell]$ is the evaluated value of the classical Hermitian form corresponding to the $\ell^{th}$ frontal slice of $\mathcal{A}$ in the frequency domain. Moreover, $h_{\mathcal{A}}$ is $t$-Hermitian positive definite (semidefinite) if and only if  
    \[\widehat{h_{\mathcal{A}}\big(\mathrm{Ev}_z(\mathcal{X}^{\Delta})\big)}\in \mathbb{R}_{>0}^p\text{ }(\in \mathbb{R}_{\geq 0}^p),\]
    or equivalently, if and only if 
    \[\widehat{\mathcal{A}}_{(\ell)}*(\overline{z},\dots,\overline{z},z,\dots,z) > 0\text{ }(\geq 0)\qquad \forall \ell\in [p]\]
    for every nonzero $z\in \mathbb{C}^n$. 
\end{lemma}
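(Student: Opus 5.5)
The plan is to reduce everything to the slicewise identity of Fact \ref{Degree k t-Hermitian form as a collection of degree k Hermitian forms} and then handle the two quantifiers, over $\mathcal{Z}\in\mathbb{T}_p^n$ and over $z\in\mathbb{C}^n$, separately. For the first claim, I would evaluate the identity of Fact \ref{Degree k t-Hermitian form as a collection of degree k Hermitian forms} at $\mathcal{X}=\mathrm{Ev}_z(\mathcal{X}^{\Delta})$. By definition $\widehat{\mathcal{X}^{\Delta}}_{(\ell)}=z$ for every $\ell$, and by Fact \ref{Fourier conjugation in frequency domain} we have $\widehat{J(\mathcal{X}^{\Delta})}_{(\ell)}=\overline{z}$. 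Equation \eqref{slicewise t-MMM} then gives $\widehat{h_{\mathcal{A}}(\mathrm{Ev}_z(\mathcal{X}^{\Delta}))}[\ell]=\widehat{\mathcal{A}}_{(\ell)}*(\overline{z},\dots,\overline{z},z,\dots,z)=h_{\widehat{\mathcal{A}}_{(\ell)}}(z)$. Since each $\widehat{\mathcal{A}}_{(\ell)}$ is conjugate partially symmetric, this value is real by Remark \ref{Properties of Degree k Hermitian Forms}. The same computation, applied to an arbitrary $\mathcal{Z}\in\mathbb{T}_p^n$, gives the more general identity $\widehat{h_{\mathcal{A}}(\mathcal{Z})}[\ell]=h_{\widehat{\mathcal{A}}_{(\ell)}}(\widehat{\mathcal{Z}}_{(\ell)})$. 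In this general identity the slices $\widehat{\mathcal{Z}}_{(\ell)}\in\mathbb{C}^n$ are arbitrary and independent, because $\mathrm{fft}$ is a bijection along the tubal mode.

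Next I would prove that slicewise positivity implies $t$-Hermitian positivity. Suppose $h_{\widehat{\mathcal{A}}_{(\ell)}}(z)>0$ ($\geq 0$) for every $\ell$ and every nonzero $z$. Let $\mathcal{Z}\neq 0$ be arbitrary. Then every Fourier entry $\widehat{h_{\mathcal{A}}(\mathcal{Z})}[\ell]$ is $\geq 0$, using $h_{\widehat{\mathcal{A}}_{(\ell)}}(0)=0$ for the slices where $\widehat{\mathcal{Z}}_{(\ell)}=0$. So $h_{\mathcal{A}}(\mathcal{Z})\in\mathbb{T}_p^+$. In the definite case, $\mathcal{Z}\neq 0$ forces some slice $\widehat{\mathcal{Z}}_{(\ell_0)}\neq 0$, and then the corresponding entry is strictly positive. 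Hence $h_{\mathcal{A}}(\mathcal{Z})\neq 0$, so $h_{\mathcal{A}}(\mathcal{Z})\in\mathbb{T}_p^+\setminus\{0\}$. The diagonal condition $\widehat{h_{\mathcal{A}}(\mathrm{Ev}_z(\mathcal{X}^{\Delta}))}\in\mathbb{R}^p_{>0}$ ($\mathbb{R}^p_{\geq 0}$) is equivalent to slicewise positivity by the first identity, so it suffices to link slicewise positivity with $t$-Hermitian positivity.

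The converse is the step needing care, and it is the main obstacle. $t$-Hermitian positive definiteness only asserts that $h_{\mathcal{A}}(\mathcal{Z})$ is a nonzero element of the cone. Testing on diagonal vectors alone therefore does not obviously force each Fourier entry to be strictly positive. My plan is to use test vectors concentrated on a single frequency. Fix $\ell_0$ and a nonzero $z$, and let $\mathcal{Z}=\mathrm{ifft}_3(\widehat{\mathcal{Z}})$ with $\widehat{\mathcal{Z}}_{(\ell_0)}=z$ and $\widehat{\mathcal{Z}}_{(\ell)}=0$ for $\ell\neq\ell_0$. Then $\mathcal{Z}\neq 0$, and by the general identity $\widehat{h_{\mathcal{A}}(\mathcal{Z})}$ vanishes except in entry $\ell_0$, where it equals $h_{\widehat{\mathcal{A}}_{(\ell_0)}}(z)$. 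Membership in $\mathbb{T}_p^+$ gives $h_{\widehat{\mathcal{A}}_{(\ell_0)}}(z)\ge 0$. In the definite case, $h_{\mathcal{A}}(\mathcal{Z})\neq 0$, and since $\mathrm{fft}$ is injective this forces $h_{\widehat{\mathcal{A}}_{(\ell_0)}}(z)>0$. Because $\ell_0$ and $z$ were arbitrary, this yields slicewise positivity, which closes the chain of equivalences.
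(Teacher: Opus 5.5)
Your proposal is correct and follows essentially the same route as the paper. Both derive the slicewise identity from the Fourier decomposition fact, extract slicewise positivity from $t$-Hermitian positivity using single-frequency test vectors $\mathcal{Z}$ (with $\widehat{\mathcal{Z}}_{(\ell_0)}=z$ and all other Fourier slices zero), and for the converse use that a nonzero $\mathcal{Z}$ has at least one nonzero Fourier slice, treating the semidefinite case identically with weak inequalities.
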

\begin{proof}
    From Fact \ref{Degree k t-Hermitian form as a collection of degree k Hermitian forms}, 
    \[\widehat{h_{\mathcal{A}}(\mathcal{X}^{\Delta})}[\ell] = \widehat{\mathcal{A}}_{(\ell)}*(\overline{x},\dots,\overline{x},x,\dots,x)\qquad \ell\in [p]\]
    for some vector of complex variables $x$; hence, under evaluation, it follows that $\widehat{h_{\mathcal{A}}\big(\mathrm{Ev}_z(\mathcal{X}^{\Delta})\big)}[\ell] = h_{\widehat{\mathcal{A}}_{(\ell)}}(z)$ for each $\ell\in [p]$.

    Suppose $h_{\mathcal{A}}$ is $t$-Hermitian positive definite; that is, for any nonzero $\mathcal{Z}\in \mathbb{T}_p^n\cong \mathbb{C}^{n\times 1\times p}$, $h_{\mathcal{A}}(\mathcal{Z})\in \mathbb{T}_p^+\setminus \{0\}$. Fix an arbitrary $\ell_*\in [p]$ and nonzero vector in $z\in \mathbb{C}^n$. Since $\mathrm{fft}$ lifts to an isomorphism $\mathbb{T}_p^n\rightarrow \mathbb{C}^{n\times 1\times p}$ is an isomorphism, it follows that there exists a unique complex tubal vector $\mathcal{Z}(\ell_*,z)\in \mathbb{T}_p^n$ such that 
    \[\widehat{\mathcal{Z}(\ell_*,z)}_{(\ell)} = 
    \begin{cases}
        \begin{rcases}
            z, & \ell = \ell_* \\
            0, & \ell\neq \ell_*
        \end{rcases}.
    \end{cases}\]
    Consequently, 
    \[\widehat{h_{\mathcal{A}}\big(\mathcal{Z}(\ell_*,z)\big)}[\ell] = 
    \begin{cases}
        \begin{rcases}
            h_{\widehat{\mathcal{A}}_{(\ell_*)}}(z), & \ell = \ell_* \\
            0, & \ell\neq \ell_*
        \end{rcases}.
    \end{cases}\]
    Thus, in the frequency domain, $h_{\mathcal{A}}\big(\mathcal{Z}(\ell_*,z)\big)$ has only one coordinate that can possibly be nonzero, namely the $\ell_*^{th}$. Furthermore, since by assumption $h_{\mathcal{A}}\big(\mathcal{Z}(\ell_*,z)\big)\in \mathbb{T}_p^+\setminus \{0\}$, it follows that the $\ell_*^{th}$-coordinate is strictly positive. Since $z$ was an arbitrary nonzero vector, it then follows that 
    \[h_{\widehat{\mathcal{A}}_{(\ell_*)}}(z) > 0\]
    for every nonzero $z\in \mathbb{C}^n$; additionally, since $\ell_*\in [p]$ was arbitrary, in fact it follows that 
    \[h_{\widehat{\mathcal{A}}_{(\ell)}}(z) > 0\]
    for every nonzero $z\in \mathbb{C}^n$, for each $\ell\in [p]$. 

    Conversely, suppose 
    \[\widehat{h_{\mathcal{A}}\big(\mathrm{Ev}_z(\mathcal{X}^{\Delta})\big)}\in \mathbb{R}_{>0}^p\]
    for every nonzero $z\in \mathbb{C}^n$, which again from Fact \ref{Degree k t-Hermitian form as a collection of degree k Hermitian forms} is equivalent to 
    \[\widehat{\mathcal{A}}_{(\ell)}*(\overline{z},\dots,\overline{z},z,\dots,z) > 0\qquad \forall \ell\in [p].\]
    Let $\mathcal{Z}\in \mathbb{T}_p^n$ be an arbitrary nonzero complex tubal vector, and denote $z^{\ell} := \widehat{\mathcal{Z}}_{(\ell)}$ for each $\ell\in [p]$. If $z^{\ell}=0$, then clearly $h_{\widehat{\mathcal{A}}_{\ell}}(z^{\ell}) = 0$, however if $z^{\ell}\neq 0$ then by assumption 
    \[h_{\widehat{\mathcal{A}}_{(\ell)}}(z^{\ell}) = \widehat{\mathcal{A}}_{(\ell)}*(\overline{z^{\ell}},\dots,\overline{z^{\ell}},z^{\ell},\dots,z^{\ell}) > 0.\]
    Therefore, 
    \[\widehat{h_{\mathcal{A}}(\mathcal{Z})}[\ell] \geq 0\]
    for every $\ell\in [p]$. Moreover, since $\mathcal{Z}$ is nonzero and $\mathrm{fft}$ lifts to an isomorphism $\mathbb{T}_p^n\rightarrow \mathbb{C}^{n\times 1\times p}$, there exists atleast one $\ell_*\in [p]$ such that $z^{\ell_*}$ is nonzero, hence 
    \[\widehat{h_{\mathcal{A}}(\mathcal{Z})}[\ell_*] = h_{\widehat{\mathcal{A}}_{(\ell_*)}}(z^{\ell_*}) > 0.\]
    Thus, $h_{\mathcal{A}}(\mathcal{Z})\in \mathbb{T}_p^+\setminus \{0\}$. Since $\mathcal{Z}$ was arbitrary, it follows that $h_{\mathcal{A}}$ is $t$-Hermitian positive definite. 

    The $t$-Hermitian positive semidefinite case follows by the same argument, replacing strict inequalities with weak inequalities and omitting the requirement that the output tube be nonzero. 
\end{proof}
With this, we may now extend the Spectral Theorem of classical Hermitian forms to $t$-Hermitian forms. 
\begin{theorem}[Spectral Characterization of t-Hermitian Positive Definiteness]\label{Higher order t-Hermitian positive definite characterization}
    Let $\mathcal{A}\in \mathbb{C}^{n\times...\times n\times p}$ be an order $2k+1$ $t$-conjugate partially symmetric hypermatrix. Then $\mathcal{A}$ is t-Hermitian positive definite (semi-definite) if and only if $\widehat{\mathcal{A}}_{(\ell)}$ is Hermitian positive definite (semi-definite) for each $\ell\in [p]$, which is true if and only if every $\widehat{H}$ eigenvalue of $\widehat{\mathcal{A}}_{(\ell)}$ is positive (nonnegative), for each $\ell\in [p]$.
\end{theorem}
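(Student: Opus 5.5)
The plan is to chain Lemma \ref{Characterizing t-Hermitian Positivity} with the classical Spectral Theorem of Hermitian Forms (Theorem \ref{Spectral Theorem of Hermitian Forms}), applied slice by slice in the frequency domain. The proof has two equivalences, and I would prove them in order.

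First, show that $h_{\mathcal{A}}$ is $t$-Hermitian positive definite (semidefinite) if and only if each $\widehat{\mathcal{A}}_{(\ell)}$ is Hermitian positive definite (semidefinite). By Lemma \ref{Characterizing t-Hermitian Positivity}, $t$-Hermitian positive definiteness is equivalent to
\[\widehat{\mathcal{A}}_{(\ell)}*(\overline{z},\dots,\overline{z},z,\dots,z) > 0\]
for every nonzero $z\in\mathbb{C}^n$ and every $\ell\in[p]$. By the Remark following Definition \ref{Hermitian positive definite form - classical}, this is exactly the statement that each hypermatrix $\widehat{\mathcal{A}}_{(\ell)}$ is Hermitian positive definite. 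The semidefinite case is identical with weak inequalities.

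Second, apply the spectral theorem to each slice. Since $\mathcal{A}$ is $t$-conjugate partially symmetric, each $\widehat{\mathcal{A}}_{(\ell)}$ is by definition an order $2k$ conjugate partially symmetric hypermatrix, so Theorem \ref{Spectral Theorem of Hermitian Forms} applies to it directly. That theorem guarantees that $\widehat{H}$-eigenvalues exist, so the eigenvalue condition is not vacuous. It also gives that $\widehat{\mathcal{A}}_{(\ell)}$ is Hermitian positive definite (semidefinite) if and only if all of its $\widehat{H}$-eigenvalues are positive (nonnegative). Ranging over $\ell\in[p]$ gives the final equivalence.

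There is essentially no analytic obstacle here, because all the work sits in Lemma \ref{Characterizing t-Hermitian Positivity} and the cited theorem. The only point needing care is a bookkeeping one: \emph{$\mathcal{A}$ is $t$-Hermitian positive definite} must be read as positivity of $h_{\mathcal{A}}$ over all nonzero $\mathcal{Z}\in\mathbb{T}_p^n$. The Lemma's slice-supported test vectors $\mathcal{Z}(\ell_*,z)$ are what show this notion reduces to slicewise positivity, rather than positivity only along the diagonal tubal vectors $\mathcal{X}^{\Delta}$. I would state this identification explicitly at the start, then cite the Lemma.
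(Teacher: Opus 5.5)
Your proposal is correct and takes the same approach as the paper. The paper's proof also obtains the result immediately: Lemma (Characterizing $t$-Hermitian Positivity), which rests on the slicewise Fourier decomposition of $h_{\mathcal{A}}$, gives the first equivalence, and the Spectral Theorem of Hermitian Forms, applied to each conjugate partially symmetric slice $\widehat{\mathcal{A}}_{(\ell)}$, gives the second.
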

\begin{proof}
    This result immediately follows from Fact \ref{Degree k t-Hermitian form as a collection of degree k Hermitian forms}, Lemma \ref{Characterizing t-Hermitian Positivity}, and the Spectral Theorem of Hermitian Forms (i.e. Theorem \ref{Spectral Theorem of Hermitian Forms}). 
\end{proof}
\begin{example}
    Consider the $k=1$ case: positive definite (semi-definite) degree $1$ Hermitian forms. In \cite{zheng2021t}, M. Zheng et al. define the notion of t-positive definite for $t$-symmetric order $3$ hypermatrices $\mathcal{A}\in \mathbb{R}^{n\times n\times p}$, which recast over $\mathbb{C}$ states that: a $t$-Hermitian hypermatrix $\mathcal{A}\in \mathbb{C}^{n\times n\times p}$ is $t$-Hermitian positive definite (semi-definite) if 
    \[\langle \mathcal{Z}, \mathcal{A}*_t \mathcal{Z}\rangle > 0\text{ }(\geq 0)\]
    for every nonzero $\mathcal{Z}\in \mathbb{C}^{n\times 1\times p}$. In \cite[Theorem 4.4]{zheng2021t}, Zheng et al. then prove a real-version of the following result: a $t$-Hermitian hypermatrix $\mathcal{A}\in \mathbb{C}^{n\times n\times p}$ is $t$-Hermitian positive definite (semi-definite) if and only if each frontal slice of $\mathcal{A}$ in the Fourier domain, $\widehat{\mathcal{A}}_{(\ell)}$, is Hermitian positive definite (semi-definite) for each $\ell\in [p]$, which is true if and only if every eigenvalue of $\widehat{\mathcal{A}}_{(\ell)}$ is positive (nonnegative), for each $\ell\in [p]$. Thus, our Theorem \ref{Higher order t-Hermitian positive definite characterization} is not only an extension of Hermitian positivity of degree $k$ forms to degree $k$ $t$-Hermitian forms, it is also a higher degree complex analogue of Zheng et al.'s characterization of $t$-positive definiteness (semi-definiteness) of $t$-quadratic forms. 
    
    Indeed, since the normalized Discrete Fourier Transform matrix $\mathrm{DFT}_p$ is unitary \cite{landsberg2024quantum}, for any tubes $a,b\in \mathbb{T}_p$, which we may view as column vectors, we have that 
    \[\langle a,b\rangle = \langle \mathrm{DFT}_pa,\mathrm{DFT}_pb\rangle = \frac{1}{p}\langle \widehat{a},\widehat{b}\rangle,\]
    hence it follows that for any $\mathcal{Z},\mathcal{W}\in \mathbb{T}_p^n$, 
    \[\langle \mathcal{Z},\mathcal{W}\rangle = \frac{1}{p}\langle \widehat{\mathcal{Z}},\widehat{\mathcal{W}}\rangle = \sum\limits_{\ell=1}^p(z^{\ell})^{\dagger}w^{\ell}\]
    with $z^{\ell} := \widehat{\mathcal{Z}}_{(\ell)}$ and $w^{\ell} := \widehat{\mathcal{W}}_{\ell}$. Consequently, from Fact \ref{t-product in frequency domain slicewise reduces to matrix multiplication} it follows that 
    \[\langle \mathcal{Z},\mathcal{A}*_t\mathcal{Z}\rangle = \frac{1}{p}\sum\limits_{\ell=1}^p(z^{\ell})^{\dagger}\widehat{\mathcal{A}}_{(\ell)}z^{\ell} = \frac{1}{p}\sum\limits_{\ell=1}^p\widehat{h_{\mathcal{A}}(\mathcal{Z})}[\ell],\]
    with the second equality following from Example \ref{Degree 1 t-Hermitian Forms}. Assuming that $h_{\mathcal{A}}$ is $t$-Hermitian positive definite, then $\widehat{h_{\mathcal{A}}(\mathcal{Z})}\in \mathbb{T}_P^+\setminus \{0\}$, which means that $\widehat{h_{\mathcal{A}}(\mathcal{Z})}[\ell]\geq 0$ for each $\ell\in [p]$ and is in fact strictly positive for atleast one such $\ell\in [p]$; hence it follows that $\langle \mathcal{Z},\mathcal{A}*_t\mathcal{Z}\rangle > 0$ for every nonzero $\mathcal{Z}\in \mathbb{T}_p^n$. Conversely, suppose 
    \[\langle Z,\mathcal{A}*_t\mathcal{Z}\rangle > 0\]
    for all nonzero $\mathcal{Z}\in \mathbb{T}_p^n\cong \mathbb{C}^{n\times 1\times p}$. Fix arbitrary $\ell_*\in [p]$ and $z\in \mathbb{C}^n$, and let $\mathcal{Z}(\ell_*,z)$ denote the unique complex tubal vector such that 
    \[\widehat{\mathcal{Z}(\ell_*,z)}_{(\ell)} = \begin{cases}
        \begin{rcases}
            z, & \ell = \ell_* \\
            0, & \ell\neq \ell_*
        \end{rcases}.
    \end{cases}\]
    Then 
    \[\langle \mathcal{Z}(\ell_*,z),\mathcal{A}*_t\mathcal{Z}(\ell_*,z)\rangle = \frac{1}{p}z^{\dagger}\widehat{\mathcal{A}}_{(\ell_*)}z,\]
    which by assumption is positive, hence $z^{\dagger}\widehat{\mathcal{A}}_{\ell_*}z > 0$. Since $z\in \mathbb{C}^n$ was an arbitrary nonzero complex vector, it follows that $\widehat{\mathcal{A}}_{(\ell_*)}$ is Hermitian positive definite; additionally, since $\ell_*\in [p]$ was arbitrary, in fact it follows that $\widehat{\mathcal{A}}_{(\ell)}$ is Hermitian positive definite for each $\ell\in [p]$. Thus, by the Spectral Theorem for $t$-Hermitian forms, $h_{\mathcal{A}}$ is $t$-Hermitian positive definite. 
\end{example}
From the Spectral Theorem of $t$-Hermitian forms, Fact \ref{Sufficient Condition for Hermitian Positive Semidefiniteness}, Theorem \ref{Characterizing Positive Definiteness of Symmetric Matricizations}, and Corollary \ref{Sufficient Condition for Hermitian Positive Definiteness}, we immediately obtain the following result. 
\begin{corollary}[Sufficient Condtions for $t$-Hermitian Non-negativity/Positivity]\label{Sufficient Condtions for t-Hermitian Non-negativity/Positivity}
    Let $\mathcal{A}\in \mathbb{C}^{n\times\dots\times n\times p}$ be an order $2k+1$ $t$-conjugate partially symmetric hypermatrix, $h_{\mathcal{A}}$ be its corresponding $t$-Hermitian form, and denote $ M_{\ell} := M_{cb}(\widehat{\mathcal{A}}_{(\ell)})$ and $S_{\ell} := M_{sym}(\widehat{\mathcal{A}}_{(\ell)})$ for each $\ell\in [p]$. Then, 
    \begin{itemize}
        \item if for each $\ell\in [p]$, either $M_{\ell}$ or $S_{\ell}$ is Hermitian positive semidefinite, then $h_{\mathcal{A}}$ is $t$-Hermitian positive semidefinite;  
        \item if for each $\ell\in [p]$, $S_{\ell}$ is Hermitian positive definite, then $h_{\mathcal{A}}$ is $t$-Hermitian positive definite. 
    \end{itemize}
\end{corollary}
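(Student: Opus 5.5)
The plan is to reduce the statement slice by slice to the classical results of Section \ref{The Different Representations of Classical Hermitian Forms}, and then lift back using Theorem \ref{Higher order t-Hermitian positive definite characterization}. First I will record that, since $\mathcal{A}$ is $t$-conjugate partially symmetric, each Fourier slice $\widehat{\mathcal{A}}_{(\ell)}$ is by definition an order $2k$ conjugate partially symmetric hypermatrix. Hence Fact \ref{Sufficient Condition for Hermitian Positive Semidefiniteness}, Theorem \ref{Characterizing Positive Definiteness of Symmetric Matricizations}, and Corollary \ref{Sufficient Condition for Hermitian Positive Definiteness} all apply to $\widehat{\mathcal{A}}_{(\ell)}$, with $M_\ell$ and $S_\ell$ in place of $M_{cb}(\mathcal{A})$ and $M_{sym}(\mathcal{A})$.

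For the semidefinite bullet, fix $\ell\in[p]$. If $M_\ell$ is Hermitian positive semidefinite, Fact \ref{Sufficient Condition for Hermitian Positive Semidefiniteness} gives that $\widehat{\mathcal{A}}_{(\ell)}$ is Hermitian positive semidefinite. If instead $S_\ell$ is Hermitian positive semidefinite, the first assertion of Theorem \ref{Characterizing Positive Definiteness of Symmetric Matricizations} shows that $M_\ell$ is positive semidefinite, which reduces to the previous case. Alternatively, one can use the identity $h_{\widehat{\mathcal{A}}_{(\ell)}}(z)=\langle U^Tz^{\otimes k},S_\ell U^Tz^{\otimes k}\rangle\ge 0$ from Corollary \ref{Sufficient Condition for Hermitian Positive Definiteness}. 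Either way, every slice is Hermitian positive semidefinite, and the semidefinite case of Theorem \ref{Higher order t-Hermitian positive definite characterization} (equivalently, of Lemma \ref{Characterizing t-Hermitian Positivity}) yields that $h_{\mathcal{A}}$ is $t$-Hermitian positive semidefinite.

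For the definite bullet, Corollary \ref{Sufficient Condition for Hermitian Positive Definiteness} applied to each slice gives that $\widehat{\mathcal{A}}_{(\ell)}$ is Hermitian positive definite for each $\ell\in[p]$. Theorem \ref{Higher order t-Hermitian positive definite characterization} then concludes that $h_{\mathcal{A}}$ is $t$-Hermitian positive definite.

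There is no real obstacle here, since everything is bookkeeping. The only point needing care is that the strict conclusion in Corollary \ref{Sufficient Condition for Hermitian Positive Definiteness} requires $U^Tz^{\otimes k}\neq 0$ for $z\neq 0$. This holds because $z^{\otimes k}$ lies in $\mathrm{Sym}^k(\mathbb{C}^n)$, so $U^Tz^{\otimes k}$ has the same norm as $z^{\otimes k}$, which is $\|z\|^k>0$. This was already used implicitly in the corollary, and I would cite it there rather than reprove it.
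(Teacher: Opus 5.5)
Your proposal is correct and follows the paper's own argument. The paper obtains this corollary by applying Fact \ref{Sufficient Condition for Hermitian Positive Semidefiniteness}, Theorem \ref{Characterizing Positive Definiteness of Symmetric Matricizations}, and Corollary \ref{Sufficient Condition for Hermitian Positive Definiteness} to each conjugate partially symmetric Fourier slice $\widehat{\mathcal{A}}_{(\ell)}$, then lifting via Theorem \ref{Higher order t-Hermitian positive definite characterization}, exactly as you do; your remark that $\|U^Tz^{\otimes k}\| = \|z\|^k > 0$ for $z\neq 0$ makes explicit a step the paper leaves implicit in the strict case.
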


\section{Positivity-Preserving Contractions}\label{Application to Classical Hermitian Forms}
\subsection{Contraction-Induced Hermitian Positive Forms}
\begin{defn}
    Let $c\in \mathbb{T}_p$ and $\mathcal{A}\in \mathbb{C}^{n\times\dots\times n\times p}$ be an order $2k+1$ hypermatrix. We call the order $2k$ hypermatrix
    \[\Phi_c(\mathcal{A}) := \sum\limits_{\ell=1}^p\overline{c[\ell]}\mathcal{A}_{(\ell)}\]
    the \textbf{$\boldsymbol{c}$-contraction hypermatrix} of $\mathcal{A}$. 
\end{defn}
\begin{remark}
    Note that by the inverse fast Fourier transformation, for each $\ell\in [p]$ we have that 
    \[\mathcal{A}_{(\ell)} = \frac{1}{p}\sum\limits_{m=1}^p\omega_p^{-(\ell-1)(m-1)}\widehat{\mathcal{A}}_{(m)},\]
    therefore
    \begin{equation}\label{contraction hypermatrix in frequency domain}
        \Phi_c(\mathcal{A}) = \sum\limits_{\ell=1}^p\overline{c[\ell]}\mathcal{A}_{(\ell)} = \sum\limits_{\ell,m=1}^p\overline{c[\ell]}\omega_p^{-(\ell-1)(m-1)}\widehat{\mathcal{A}}_{(m)} = \frac{1}{p}\sum\limits_{m=1}^p\overline{\widehat{c}[m]}\widehat{\mathcal{A}}_{(m)}.
    \end{equation}
\end{remark}
\begin{proposition}[Quantitative Lower Bound for Contraction Induced Forms]\label{Quantitative Lower Bound for Contraction Induced Forms}
    Let $\mathcal{A}\in \mathbb{C}^{n\times\dots\times n\times p}$ be an order $2k+1$ $t$-Hermitian positive semidefinite hypermatrix and $c\in \mathbb{T}_p^+$, and define 
    \[\alpha_{\ell} := \min\limits_{\|z\|=1}h_{\widehat{\mathcal{A}}_{(\ell)}}(z).\]
    Then \[h_{\Phi_c(\mathcal{A})}(z) \geq \gamma_c(\mathcal{A})\|z\|^{2k}\qquad \forall z\in \mathbb{C}^n,\]
    where 
    \[\gamma_c(\mathcal{A}) := \frac{1}{p}\sum\limits_{l=1}^p\widehat{c}[\ell]\alpha_{\ell},\]
    hence $h_{\Phi_c(\mathcal{A})}(x)$ is Hermitian positive semidefinite. 
    
    Furthermore, if additionally $\mathcal{A}$ is $t$-Hermitian positive definite and $c\neq 0$, then $\gamma_c(\mathcal{A}) > 0$, hence $h_{\Phi_c(\mathcal{A})}(x)$ is Hermitian positive definite. 
\end{proposition}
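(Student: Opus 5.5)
The plan is to push everything into the frequency domain using equation \eqref{contraction hypermatrix in frequency domain}, which expresses the contraction hypermatrix as
\[\Phi_c(\mathcal{A}) = \frac{1}{p}\sum_{m=1}^p\overline{\widehat{c}[m]}\,\widehat{\mathcal{A}}_{(m)}.\]
Since $c\in\mathbb{T}_p^+$, every $\widehat{c}[m]$ is a nonnegative real number, so $\overline{\widehat{c}[m]}=\widehat{c}[m]$. The contraction is therefore a nonnegative real combination of the frequency-domain slices. Each slice $\widehat{\mathcal{A}}_{(m)}$ is conjugate partially symmetric because $\mathcal{A}$ is $t$-conjugate partially symmetric. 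Conjugate partial symmetry is preserved under real linear combinations, so $\Phi_c(\mathcal{A})$ is itself an order $2k$ conjugate partially symmetric hypermatrix, and $h_{\Phi_c(\mathcal{A})}$ is a genuine real-valued degree $k$ Hermitian form by Fact \ref{Hermitian Form Bijection}.

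Next I would use linearity of multilinear matrix multiplication in the hypermatrix argument. This gives
\[h_{\Phi_c(\mathcal{A})}(z)=\frac{1}{p}\sum_{m=1}^p\widehat{c}[m]\,h_{\widehat{\mathcal{A}}_{(m)}}(z)\qquad\text{for all } z\in\mathbb{C}^n.\]
Homogeneity (Remark \ref{Properties of Degree k Hermitian Forms}, $h(\lambda v)=|\lambda|^{2k}h(v)$) applied with $\lambda=\|z\|$ and $z/\|z\|$ yields $h_{\widehat{\mathcal{A}}_{(m)}}(z)\ge\alpha_m\|z\|^{2k}$ for $z\neq0$; the case $z=0$ is trivial. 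The minimum defining $\alpha_m$ exists because $h_{\widehat{\mathcal{A}}_{(m)}}$ is a continuous real-valued polynomial in the real and imaginary parts of $z$, taken over the compact unit sphere.

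Multiplying each of these bounds by $\widehat{c}[m]\ge0$ and summing gives $h_{\Phi_c(\mathcal{A})}(z)\ge\gamma_c(\mathcal{A})\|z\|^{2k}$. By Theorem \ref{Higher order t-Hermitian positive definite characterization} (or Lemma \ref{Characterizing t-Hermitian Positivity}), $t$-Hermitian positive semidefiniteness gives $\alpha_m\ge0$ for every $m$. Hence $\gamma_c(\mathcal{A})\ge0$, and semidefiniteness of $h_{\Phi_c(\mathcal{A})}$ follows.

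For the definite case, Lemma \ref{Characterizing t-Hermitian Positivity} gives $h_{\widehat{\mathcal{A}}_{(m)}}(z)>0$ on the unit sphere, so compactness gives $\alpha_m>0$ for every $m$. Since $c\neq0$ and $\mathrm{fft}$ is injective, some $\widehat{c}[m_*]>0$. Then $\gamma_c(\mathcal{A})\ge\frac1p\widehat{c}[m_*]\alpha_{m_*}>0$, and the lower bound gives strict positivity of $h_{\Phi_c(\mathcal{A})}(z)$ for every $z\ne0$.

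The only real obstacle is the bookkeeping in \eqref{contraction hypermatrix in frequency domain}: the conjugation on $c$ and the normalization factor $1/p$ have to line up with the convention $\mathrm{fft}=\sqrt p\,\mathrm{DFT}_p$. The middle expression in that remark also seems to drop a $1/p$ and a conjugation. I would re-derive it directly, in the form $\sum_\ell\overline{c[\ell]}\omega_p^{-(\ell-1)(m-1)}=\overline{\sum_\ell c[\ell]\omega_p^{(\ell-1)(m-1)}}=\overline{\widehat c[m]}$, to confirm that it is exactly $\overline{\widehat c[m]}$ (not $\widehat c$ at a reflected index) that multiplies $\widehat{\mathcal A}_{(m)}$. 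Once that identity is secured, the rest is the linear-combination-of-nonnegative-bounds argument above.
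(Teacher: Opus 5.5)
Your proof is correct and follows the paper's argument essentially step for step. Like the paper, you rewrite $h_{\Phi_c(\mathcal{A})}(z)$ as the nonnegative combination $\frac{1}{p}\sum_{m}\widehat{c}[m]\,h_{\widehat{\mathcal{A}}_{(m)}}(z)$, bound each term by $\alpha_m\|z\|^{2k}$ via homogeneity, and for the definite case use compactness together with some $\widehat{c}[m_*]>0$.

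On your bookkeeping concern, the paper's middle expression drops only the factor $1/p$. No conjugation is lost, since $\sum_{\ell}\overline{c[\ell]}\,\omega_p^{-(\ell-1)(m-1)}=\overline{\widehat{c}[m]}$, so the final formula the paper uses is exactly the one you re-derive.
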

\begin{proof}
    First note that $\alpha_{\ell}$ exists because the unit sphere is compact and Hermitian forms are continuous, hence the minimum is obtained. Furthermore, by $t$-Hermitian positive semidefiniteness of $\mathcal{A}$, 
    \[h_{\widehat{\mathcal{A}}_{(\ell)}}(z) \geq 0\]
    for all nonzero $z\in \mathbb{C}^n$ and each $\ell\in [p]$, hence $\alpha_{\ell}\geq 0$ for each $\ell\in [p]$, in which case it follows that $\gamma_c(\mathcal{A})\geq 0$ since we are assuming $c\in \mathbb{T}_p^+$. Additionally, since $z\neq 0$, by homogeneity of the Hermitian form (which recall is of bi-degree $(k,k)$), we have that 
    \[h_{\widehat{\mathcal{A}}_{(\ell)}}(z) = \|z\|^{2k}h_{\widehat{\mathcal{A}}_{(\ell)}}\left(\frac{z}{\|z\|}\right) \geq \alpha_{\ell}\|z\|^{2k}.\]
    Therefore,  
    \begin{align*}
        h_{\Phi_c(\mathcal{A})}(z) &= \frac{1}{p}\sum\limits_{\ell=1}^p\widehat{c}[\ell]h_{\widehat{\mathcal{A}}_{(\ell)}}(z),\quad \text{by equation }\eqref{contraction hypermatrix in frequency domain}\text{ and since }c\in \mathbb{T}_p^+ \\
        &\geq \frac{1}{p}\sum\limits_{\ell=1}^p\widehat{c}[\ell]\alpha_{\ell}\|z\|^{2k},\quad \text{by above} \\
        &= \gamma_c(\mathcal{A})\|z\|^{2k},\quad \text{by definition},
    \end{align*}
    and so consequently we immediately have that $h_{\Phi_c(\mathcal{A})}(x)$ is Hermitian positive semidefinite. 
    
    Now, if additionally we assume that $\mathcal{A}$ is $t$-Hermitian positive definite and that $c\neq 0$, then 
    \[h_{\widehat{\mathcal{A}}_{(\ell)}}(z) > 0\]
    for all nonzero $z\in \mathbb{C}^n$ and each $\ell\in [p]$, and so by continuity of $h_{\widehat{\mathcal{A}}_{(\ell)}}$ it follows that $\alpha_{\ell} > 0$ for each $\ell\in [p]$. Moreover, since $c\neq 0$ there exists atleast one $\ell\in [p]$ such that $\widehat{c}[\ell] > 0$, hence $\gamma_c(\mathcal{A}) > 0$. Thus, from the calculation above, 
    \[h_{\Phi_c(\mathcal{A})}(z) \geq \gamma_c(\mathcal{A})\|z\|^{2k} > 0\]
    for all nonzero $z\in \mathbb{C}^n$, from which it immediately follows that $h_{\Phi_c(\mathcal{A})}(x)$ is Hermitian positive definite. 
\end{proof}

\subsection{Characterizing Positivity-Preserving Contractions}
Below we characterize when $c$-contractions preserve positivity. In particular, we prove that if $\mathcal{A}$ is $t$-Hermitian positive definite, then $\Phi_c(\mathcal{A})$ induces a Hermitian form that is positive on $\mathbb{C}^n\setminus \{0\}$ if and only if $c$ belongs to the cone of positive tubes. While the forward implication is unsurprising as it immediately follows from equation \eqref{contraction hypermatrix in frequency domain}, the backwards implication is less immediate and has interesting geometric consequences. In particular, the backwards implication is equivalent to saying that if $c$ is not in the positive cone, then $\Phi_c$ will not universally preserve positivity; that is, atleast hypothetically, we should always be able to find a $t$-Hermitian positive definite hypermatrix such that $\Phi_c(\mathcal{A})$ does not correspond to a positive Hermitian form. Indeed, after stating and proving the characterization theorem, we will give an explicit example of such phenomena, showing that no matter how close a tube is to the positive cone, if it is not in the positive cone, then positivity is eventually violated. 

Without further ado, we now state and prove the characterization theorem. 
\begin{theorem}[Characterization of Positivity-Preserving Contractions]
    Let $c\in \mathbb{T}_p$. The following are equivalent:
    \begin{itemize}
        \item[(i)] $c\in \mathbb{T}_p^+$; that is, $\widehat{c}[\ell]\geq 0$ for each $\ell\in [p]$. 
        \item[(ii)] $\Phi_c$ \textbf{weakly preserves positivity}, meaning that for every order $2k+1$ $t$-Hermitian positive semidefinite hypermatrix $\mathcal{A}\in \mathbb{C}^{n\times\dots\times n\times p}$, $h_{\Phi_c(\mathcal{A})}(x)$ is Hermitian positive semidefinite. 
    \end{itemize}
    Additionally, the following are equivalent:
    \begin{itemize}
        \item[(a)] $c\in \mathbb{T}_p^+\setminus \{0\}$; that is, $\widehat{c}[\ell]\geq 0$ for all $\ell\in [p]$ and $\widehat{c}[\ell_*] > 0$ for atleast one $\ell_*\in [p]$. 
        \item[(b)] $\Phi_c$ \textbf{strongly preserves positivity}, meaning that for every order $2k+1$ $t$-Hermitian positive definite hypermatrix $\mathcal{A}\in \mathbb{C}^{n\times\dots\times n\times p}$, $h_{\Phi_c(\mathcal{A})}(x)$ is Hermitian positive definite. 
    \end{itemize}
\end{theorem}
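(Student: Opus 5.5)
The forward implications (i)$\Rightarrow$(ii) and (a)$\Rightarrow$(b) are exactly Proposition \ref{Quantitative Lower Bound for Contraction Induced Forms}. There is one small point to check first: a $t$-Hermitian positive semidefinite $\mathcal{A}$ has $\widehat{\mathcal{A}}_{(m)}$ conjugate partially symmetric. By equation \eqref{contraction hypermatrix in frequency domain}, with $\widehat{c}[m]$ real, $\Phi_c(\mathcal{A})=\frac1p\sum_m \widehat{c}[m]\widehat{\mathcal{A}}_{(m)}$ is then a real combination of conjugate partially symmetric hypermatrices. It is therefore itself conjugate partially symmetric, so $h_{\Phi_c(\mathcal{A})}$ is a genuine Hermitian form. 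The nonnegativity and positivity claims are then the conclusions of that proposition.

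For the converses I will argue by contraposition, constructing explicit test hypermatrices in the frequency domain. This is legitimate because, as explained after Fact \ref{t-Hermitian Form Bijection}, any choice of conjugate partially symmetric slices $\mathcal{B}_1,\dots,\mathcal{B}_p$ defines a unique $t$-conjugate partially symmetric $\mathcal{A}$ with $\widehat{\mathcal{A}}_{(m)}=\mathcal{B}_m$. The natural building block is the order $2k$ hypermatrix $\mathcal{E}$ with $h_{\mathcal{E}}(z)=\|z\|^{2k}=(z^\dagger z)^k$. Its coefficients are the symmetrization of the Kronecker deltas $\delta_{i_1j_1}\cdots\delta_{i_kj_k}$ over $S_k\times S_k$, which is real, conjugate partially symmetric, and Hermitian positive definite.

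\textbf{(ii)$\Rightarrow$(i).} Suppose $c\notin\mathbb{T}_p^+$. There are two sub-cases.
\begin{itemize}
  \item Some $\widehat{c}[m_*]$ is not real. Take $\widehat{\mathcal{A}}_{(m_*)}=\mathcal{E}$ and all other slices $0$. Then $\mathcal{A}$ is $t$-Hermitian positive semidefinite by Theorem \ref{Higher order t-Hermitian positive definite characterization}. Moreover $h_{\Phi_c(\mathcal{A})}(z)=\frac1p\widehat{c}[m_*]\|z\|^{2k}$, which is non-real, hence not nonnegative. (Alternatively, $\Phi_c(\mathcal{A})$ fails to be conjugate partially symmetric, so it does not define a Hermitian positive semidefinite form.)
  \item Some $\widehat{c}[m_*]<0$. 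The same $\mathcal{A}$ gives $h_{\Phi_c(\mathcal{A})}(z)=\frac1p\widehat{c}[m_*]\|z\|^{2k}<0$ for $z\neq0$.
\end{itemize}

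\textbf{(b)$\Rightarrow$(a).} If $c=0$, then $\Phi_c(\mathcal{A})=0$ for every $\mathcal{A}$, so strong preservation fails; for instance $\mathcal{A}$ with all slices $\mathcal{E}$ is $t$-Hermitian positive definite. If $c\neq0$ but $c\notin\mathbb{T}_p^+$, choose $m_*$ as above with $\widehat{c}[m_*]$ non-real or negative. Set
\[
\widehat{\mathcal{A}}_{(m_*)}=\mathcal{E},\qquad \widehat{\mathcal{A}}_{(m)}=\varepsilon\mathcal{E}\quad (m\neq m_*),
\]
for small $\varepsilon>0$. Every slice is then Hermitian positive definite, so $\mathcal{A}$ is $t$-Hermitian positive definite. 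We get
\[
h_{\Phi_c(\mathcal{A})}(z)=\frac1p\Big(\widehat{c}[m_*]+\varepsilon\sum_{m\neq m_*}\widehat{c}[m]\Big)\|z\|^{2k}.
\]
For $\varepsilon$ small enough, the bracket is still negative in the case $\widehat{c}[m_*]<0$. In the non-real case it has nonzero imaginary part once $\varepsilon$ avoids at most one bad value, and nonzero imaginary part already precludes positivity.

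The main obstacle is handling non-real $\widehat{c}[m]$ cleanly, that is, deciding what ``$h_{\Phi_c(\mathcal{A})}$ is Hermitian positive (semi)definite'' means when $\Phi_c(\mathcal{A})$ is not conjugate partially symmetric. I would interpret positivity by evaluation as in Definition \ref{Hermitian positive definite form - classical}. A non-real value then immediately violates it, which is why the test forms above are chosen as real multiples of $\|z\|^{2k}$, making the value's phase visible.
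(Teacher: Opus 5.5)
Your proposal is correct. The forward directions and (ii)$\Rightarrow$(i) match the paper. For (ii)$\Rightarrow$(i) the paper uses the same single-slice test tensor, with an arbitrary nonzero Hermitian positive semidefinite $\mathcal{B}$ in place of your $\mathcal{E}$. It handles non-real $\widehat{c}[m]$ only implicitly, by reading ``$\ge 0$'' as ``real and nonnegative''.

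The real difference is (b)$\Rightarrow$(a). The paper builds no witness. Instead it shows strong preservation implies weak preservation: it replaces $\mathcal{A}$ with $\mathcal{A}+\epsilon\mathcal{B}$, where $\mathcal{B}$ is $t$-Hermitian positive definite, and lets $\epsilon\to0^+$. It then reuses (ii)$\Rightarrow$(i) and rules out $c=0$ separately. For this to give weak preservation, $\mathcal{A}$ must range over $t$-Hermitian positive \emph{semi}definite hypermatrices, although the paper's text says ``definite''. That reduction is modular and needs no specific $\varepsilon$. Your direct construction (slice $\mathcal{E}$ at $m_*$, $\varepsilon\mathcal{E}$ elsewhere) instead produces an explicit $t$-Hermitian positive definite counterexample, in the spirit of Example~\ref{Contraction Induced Quartic Example}, and treats the non-real case head-on.

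There are two harmless slips. First, by \eqref{contraction hypermatrix in frequency domain} the weights are $\overline{\widehat{c}[m]}$ rather than $\widehat{c}[m]$; negativity and non-reality both survive conjugation, so nothing breaks. Second, in the negative case the bracket may be non-real if some other $\widehat{c}[m]$ is non-real. Its real part still stays negative for small $\varepsilon$, which is all you need.
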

\begin{proof}
    Both equation \eqref{contraction hypermatrix in frequency domain} and Proposition \ref{Quantitative Lower Bound for Contraction Induced Forms} immediately give $(i)\Rightarrow (ii)$. 

    Conversely, suppose that $\Phi_c$ weakly preserves positivity; that is, for every order $2k+1$ $t$-Hermitian positive semidefinite hypermatrix $\mathcal{A}\in \mathbb{C}^{n\times\dots\times n\times p}$, the $c$-contraction hypermatrix
    \[\Phi_c(\mathcal{A}) = \sum\limits_{\ell=1}^p\overline{c[\ell]}\mathcal{A}_{(\ell)}\]
    represents a classical Hermitian positive semidefinite form. Fix $m\in [p]$, let $\mathcal{B}\in\mathbb{C}^{n\times\dots\times n}$ be any nonzero order $2k$ Hermitian positive semidefinite hypermatrix, and define the order $2k+1$ hypermatrix $\mathcal{A}\in \mathbb{C}^{n\times\dots\times n\times p}$ so that 
    \[\widehat{\mathcal{A}}_{(m)} = \mathcal{B}\]
    and 
    \[\widehat{\mathcal{A}}_{(\ell)} = 0\]
    for all $\ell\neq m$. By construction, every frontal slice of $\mathcal{A}$ in the frequency domain represents a classical Hermitian positive semidefinite form, hence $\mathcal{A}$ represents a $t$-Hermitian positive semidefinite form. Furthermore, by equation \eqref{contraction hypermatrix in frequency domain}, we have that  
    \[\Phi_c(\mathcal{A}) = \frac{1}{p}\overline{\widehat{c}[m]}\mathcal{B},\]
    thus the $c$-contraction hypermatrix $\Phi_c(\mathcal{A})$ represents the classical Hermitian positive semidefinite form
    \[h_{\Phi_c(\mathcal{A})}(x) = \frac{1}{p}\overline{\widehat{c}[m]}h_{\mathcal{B}}(x).\]
    Moreover, since $\mathcal{B}\neq 0$ and is Hermitian positive semidefinite, this implies that there exists $z_*\in \mathbb{C}^n$ such that 
    \[h_{\mathcal{B}}(z_*)>0.\]
    Therefore, Hermitian positive semidefiniteness of $\frac{1}{p}\overline{\widehat{c}[m]}h_{\mathcal{B}}(x)$ implies that 
    \[\frac{1}{p}\overline{\widehat{c}_m}h_{\mathcal{B}}(z_*)\geq 0,\]
    hence $\overline{\widehat{c}[m]}\geq 0$; that is, $\widehat{c}[m]\geq 0$. Since $m$ was arbitrary, repeatedly applying this construction yields $\widehat{c}[\ell]\geq 0$ for all $l\in [p]$. Thus, $(ii)\Rightarrow (i)$. 

    Next, suppose $c\in \mathbb{T}_p^+\setminus \{0\}$ and let $\mathcal{A}\in \mathbb{C}^{n\times\dots\times n\times p}$ be any order $2k+1$ $t$-Hermitian positive definite hypermatrix. By $t$-Hermitian positive definiteness, for every nonzero $z\in \mathbb{C}^n$ we have that 
    \[h_{\widehat{\mathcal{A}}_{(\ell)}}(z) > 0\]
    for each $\ell\in [p]$. Consequently, 
    \[h_{\Phi_c(\mathcal{A})}(z) = \frac{1}{p}\sum\limits_{\ell=1}^p\widehat{c}[\ell]h_{\widehat{\mathcal{A}}_{(\ell)}}(z)>0\]
    for all nonzero $z\in \mathbb{C}^n$, proving that classical Hermitian form induced by $\Phi_c(\mathcal{A})$ is Hermitian positive definite. Thus, $(a)\Rightarrow (b)$. Conversely, suppose $\Phi_c$ strongly preserves positivity; that is, for every order $2k+1$ $t$-Hermitian positive definite hypermatrix $\mathcal{A}\in \mathbb{C}^{n\times\dots\times n\times p}$, $h_{\Phi_c(\mathcal{A})}(x)$ is Hermitian positive semidefinite. For any order $2k+1$ $t$-Hermitian positive definite hypermatrix $\mathcal{A}\in \mathbb{C}^{n\times\dots\times n\times p}$, let $\epsilon > 0$ and $\mathcal{B}\in \mathbb{C}^{n\times\dots\times n\times p}$ be an order $2k+1$ $t$-Hermitian positive definite hypermatrix. Then $\mathcal{A}+\epsilon\mathcal{B}$ is $t$-Hermitian positive definite, and by assumption 
    \[\Phi_c(\mathcal{A}+\epsilon\mathcal{B}) = \Phi_c(\mathcal{A})+\epsilon\Phi_c(\mathcal{B})\]
    represents a strictly positive Hermitian form. That is, for any nonzero $z\in \mathbb{C}^n$, 
    \[h_{\Phi_c(\mathcal{A})}(z)+\epsilon h_{\Phi_c(\mathcal{B})}(z) > 0,\]
    and so taking $\epsilon\rightarrow 0^+$, it follows that 
    \[h_{\Phi_c(\mathcal{A})}(z)\geq 0.\]
    Hence, $\Phi_c(\mathcal{A})$ is Hermitian positive semidefinite, showing that $\Phi_c$ also weakly preserves positivity. From the previous paragraph, this implies that $c\in \mathbb{T}_p^+$. Indeed, it must be the case that $c\in \mathbb{T}_p^+\setminus \{0\}$, for otherwise if $c=0$, then $\Phi_c(\mathcal{A})=0$ for all $\mathcal{A}\in \mathbb{C}^{n\times\dots\times n\times p}$, which does not represent a Hermitian positive definite form, violating the assumption that $\Phi_c$ strongly preserves positivity. Thus, $(b)\Rightarrow (a)$. 
\end{proof}
\begin{remark}
    In fact, if $c\in \mathbb{T}_p^+\setminus \{0\}$, then in certain instances we may relax the strict $t$-Hermitian positive definiteness assumption of $\mathcal{A}$ to $t$-Hermitian positive semidefiniteness and still guarantee $\Phi_c(\mathcal{A})$ to be strictly Hermitian positive definite. 

    In particular, let $\mathcal{A}\in \mathbb{C}^{n\times\dots\times n\times p}$ be an order $2k+1$ $t$-Hermitian positive semidefinite hypermatrix, $c\in \mathbb{T}_p^+\setminus \{0\}$, and define 
    \[I_c := \{\ell\in [p]:\widehat{c}[\ell]>0\}.\]
    Then we claim that $\Phi_c(\mathcal{A})$ is Hermitian positive definite if and only if 
    \[\bigcap\limits_{\ell\in I_c}\left\{z\in \mathbb{C}^n:h_{\widehat{\mathcal{A}}_{(\ell)}}(z)=0\right\} = \{0\}.\]
    Indeed by \eqref{contraction hypermatrix in frequency domain}, $\Phi_c(\mathcal{A}) = \frac{1}{p}\sum\limits_{l\in I_c}\widehat{c}[\ell]\widehat{\mathcal{A}}_{(l)}$, and by the Characterization of Positivity-Preserving Contractions Theorem, this uniquely represents the classical Hermitian positive semidefinite form
    \[h_{\Phi_c(\mathcal{A})}(x) = \frac{1}{p}\sum\limits_{l\in I_c}\overline{\widehat{c}[\ell]}h_{\widehat{\mathcal{A}}_{(\ell)}}(x).\]
    This form vanishes on precisely $\bigcap\limits_{\ell\in I_c}\left\{z\in \mathbb{C}^n:h_{\widehat{\mathcal{A}}_{(\ell)}}(z)=0\right\}$, hence $h_{\Phi_c(\mathcal{A})}$ is strictly positive if and only if $\bigcap\limits_{\ell\in I_c}\left\{z\in \mathbb{C}^n:h_{\widehat{\mathcal{A}}_{(\ell)}}(z)=0\right\} = \{0\}$.  
\end{remark}
Below is a useful consequence of the Characterization of Positivity-Preserving Contractions Theorem, which we will later utilize in an example. 
\begin{corollary}\label{Symmetric Tubes and Preserving Positivity}
    Let $c$ be a real-valued $p$-dimensional tube, $c[\ell]\leq 0$ for each $\ell=2,\dots,p$, and suppose that $c$ is "symmetric" in the sense that 
    \[c[\ell] = c[p-\ell+2]\]
    for each $\ell=2,\dots,p$. Then $\Phi_c$ preserves positivity if and only if 
    \[c[1] \geq \sum\limits_{m=2}^p\left|c[m]\right|.\]
\end{corollary}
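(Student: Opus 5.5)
The plan is to reduce the statement to the Characterization of Positivity-Preserving Contractions Theorem. That theorem says $\Phi_c$ (weakly) preserves positivity if and only if $c\in\mathbb{T}_p^+$, i.e.\ $\widehat{c}[m]\geq 0$ for every $m\in[p]$. So it suffices to show that, under the stated hypotheses on $c$,
\[\widehat{c}[m]\geq 0\quad\forall m\in[p]\iff c[1]\geq \sum_{\ell=2}^p|c[\ell]|.\]

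First I will compute $\widehat{c}$ explicitly. By definition,
\[\widehat{c}[m]=\sum_{\ell=1}^p c[\ell]\,\omega_p^{(\ell-1)(m-1)}=c[1]+\sum_{\ell=2}^p c[\ell]\,\omega_p^{(\ell-1)(m-1)}.\]
Next I pair the index $\ell$ with $p-\ell+2$ for $\ell=2,\dots,p$. The symmetry $c[\ell]=c[p-\ell+2]$ together with $\omega_p^{(p-\ell+1)(m-1)}=\overline{\omega_p^{(\ell-1)(m-1)}}$ shows that the imaginary parts cancel, including the self-paired middle term when $p$ is even. Hence
\[\widehat{c}[m]=c[1]+\sum_{\ell=2}^p c[\ell]\cos\!\Big(\tfrac{2\pi(\ell-1)(m-1)}{p}\Big)\in\mathbb{R}.\]
This realness is needed for the condition $\widehat{c}[m]\geq 0$ to be meaningful, and it is the only step that needs care with the indexing.

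For sufficiency, I use $|\cos|\leq 1$ and $c[\ell]=-|c[\ell]|$ for $\ell\geq 2$. This gives
\[\widehat{c}[m]\geq c[1]-\sum_{\ell=2}^p|c[\ell]|\geq 0\]
for every $m$, so $c\in\mathbb{T}_p^+$ and the theorem applies.

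For necessity, I take $m=1$, where every cosine equals $1$. Then
\[\widehat{c}[1]=c[1]+\sum_{\ell\geq 2}c[\ell]=c[1]-\sum_{\ell\geq 2}|c[\ell]|,\]
and nonnegativity of $\widehat{c}[1]$ is exactly the claimed inequality. So the $m=1$ Fourier coefficient is the binding one, because the nonpositivity of $c[2],\dots,c[p]$ makes it the minimum of $\widehat{c}$.

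I do not expect a real obstacle here. The one point to state explicitly is which notion of preservation is meant. The argument above gives the weak version. For the strong version, part (a)$\Leftrightarrow$(b) of the theorem additionally requires $c\neq 0$. If $c\neq 0$, the hypotheses force $c[1]>0$ whenever the inequality holds, so $\widehat{c}\not\equiv 0$ and the same equivalence carries over verbatim.
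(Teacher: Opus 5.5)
Your proposal is correct and follows the paper's proof essentially step for step: the paper also uses the symmetry $c[\ell]=c[p-\ell+2]$ (after re-indexing from $0$) to show $\widehat{c}$ is real, writes $\widehat{c}[\ell]=c[1]-\sum_{m\geq 2}|c[m]|\cos\bigl(\tfrac{2\pi(\ell-1)(m-1)}{p}\bigr)\geq c[1]-\sum_{m\geq 2}|c[m]|$ for sufficiency, and uses $\widehat{c}[1]=c[1]-\sum_{m\geq 2}|c[m]|$ together with the Characterization of Positivity-Preserving Contractions Theorem for necessity. Your added caveat is correct and is left implicit in the paper: the strong (positive definite) reading needs $c\neq 0$, since $c=0$ satisfies the inequality but $\Phi_0$ does not strongly preserve positivity.
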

\begin{proof}
    Let $b$ be $c$ re-indexed starting at $0$, so that $b[\ell]  = c[\ell+1]$ for $\ell=0,1,\dots,p-1$. Then the symmetry condition
    \[c[\ell] = c[p-\ell+2]\]
    with $\ell=2,\dots,p$ is equivalent to 
    \[b[\ell] = b[-\ell\bmod p]\]
    with $\ell=0,1,\dots,p-1$. Applying the FFT to $b$, for each $\ell=0,1,\dots,p-1$ we have that  
    \begin{align*}
        \widehat{b}[\ell] &= \sum\limits_{m=0}^{p-1}b[m]e^{-\frac{2\pi i \ell m}{p}} \\
        &= b[0] + \sum\limits_{m=1}^{p-1}b[-m\bmod p]e^{-\frac{2\pi i \ell m}{p}} \\
        &= b[0] + \sum\limits_{m'=-(p-1)}^{-1}b[m'\bmod p]e^{\frac{2\pi i \ell m'}{p}},\quad \text{setting }m':=-m \\
        &= b[0] + \sum\limits_{m'=1}^{p-1}b[m']e^{\frac{2\pi i \ell m'}{p}},\quad \text{by cyclicity} \\
        &= \overline{\sum\limits_{m'=0}^{p-1}b[m']e^{-\frac{2\pi i \ell m'}{p}}} \\
        &= \overline{\widehat{b}[\ell]},
    \end{align*}
    proving that $\widehat{b}$, and hence $\widehat{c}$, is real-valued. Consequently, by Euler's formula, for each $\ell\in [p]$ we have that 
    \[\widehat{c}[\ell] = c[1] - \sum\limits_{m=2}^p|c[m]|\cos\left(\frac{2\pi (\ell-1)(m-1)}{p}\right) \geq c[1]-\sum\limits_{m=2}^p|c[m]|.\]
    Thus, if $c[1] \geq \sum\limits_{m=2}^p|c[m]|$, then $c$ is Fourier-positive, in which case $\Phi_c$ preserves positivity by the Characterization of Positivity-Preserving Contractions Theorem. Conversely, if $c[1] < \sum\limits_{m=2}^p|c[m]|$, then 
    \[\widehat{c}[1] = c[1] - \sum\limits_{m=2}^p|c[m]| < 0,\]
    implying that $c$ is not Fourier-positive, hence $\Phi_c$ does not preserve positivity by the Characterization of Positivity-Preserving Contractions Theorem. 
\end{proof}

\begin{example}\label{Contraction Induced Quartic Example}
    Let $x = \left[\begin{array}{c}
        x_1 \\
        x_2
    \end{array}\right]$ be a vector of complex variables and let $v(x) = \left[\begin{array}{c}
        x_1^2 \\
        \sqrt{2}x_1x_2 \\
        x_2^2
    \end{array}\right]$. Note that $v(x)$ is the formal matrix product $U^Tx^{\otimes 2}$ with $U$ the $4\times 3$ matrix representation of the isometry $\mathrm{Sym}^2(\mathbb{C}^2)\hookrightarrow \mathbb{C}^4$. Now, consider  
    \[S_1 = \left[\begin{array}{ccc}
        3 & 0 & 0 \\
        0 & -2 & 0 \\
        0 & 0 & 3
    \end{array}\right].\]
    $S_1$ is indefinite, however its corresponding Hermitian quartic form
    \begin{align*}
        v(x)^{\dagger}S_1v(x) &= 3|x_1|^4-4|x_1|^2|x_2|^2+3|x_2|^4 \\
        &= \frac{1}{2}\big(|x_1|^2+|x_2|^2\big)^2+\frac{5}{2}\big(|x_1|^2-|x_2|^2\big)^2 \\
        &\geq \frac{1}{2}\|x\|^4
    \end{align*}
    is Hermitian positive definite. For $z\in\mathbb{C}$, define 
    \[Q_z := \left[\begin{array}{cc}
        1 & z \\
        0 & 1
    \end{array}\right]\]
    and 
    \[P_z := \left[\begin{array}{ccc}
        1 & \sqrt{2}z & z^2 \\
        0 & 1 & \sqrt{2}z \\
        0 & 0 & 1
    \end{array}\right];\]
    note in particular that $v(Qx) = v(x_1+zx_2,x_2) = P_zv(x)$. Now define   
    \[S_2 := P_1^{\dagger}S_1P_1\quad\text{and}\quad S_3 := P_{1+i}^{\dagger}S_1P_{1+i};\]
    explicitly, 
    \[S_2 = \left[\begin{array}{ccc}
        3 & 3\sqrt{2} & 3 \\
        3\sqrt{2} & 4 & \sqrt{2} \\
        3 & \sqrt{2} & 2
    \end{array}\right]\]
    and 
    \[S_3 = \left[\begin{array}{ccc}
        3 & 3\sqrt{2}(1+i) & 6i \\
        3\sqrt{2}(1-i) & 10 & 4\sqrt{2}(1+i) \\
        -6i & 4\sqrt{2}(1-i) & 7
    \end{array}\right].\]
    Since $S_2$ and $S_3$ are by definition congruent to $S_1$, by Sylvester's Law of Inertia each of these matrices are indefinite. Nevertheless,  
    \[v(x)^{\dagger}S_2v(x) = v(Q_1x)^{\dagger}S_1v(Q_1x)\]
    and 
    \[v(x)^{\dagger}S_3v(x) = v(Q_{1+i}x)^{\dagger}S_1v(Q_{1+i}x);\]
    that is, the Hermitian quartic forms corresponding to $S_2$ and $S_3$ are obtained by an invertible change of variables transformation of the Hermitian positive definite form corresponding to $S_1$, hence they are Hermitian positive definite too.  

    For each $\lambda > 0$, let $\mathcal{A}_{\lambda}\in \mathbb{C}^{2\times 2\times 2\times 2\times 3}$ be such that 
    \[\begin{cases}
        \begin{rcases}
            M_{sym}(\widehat{\mathcal{A}}_{(1)}) = \lambda S_1 \\
            M_{sym}(\widehat{\mathcal{A}}_{(\ell)}) = S_{\ell}, & \ell = 2,3
        \end{rcases}. 
    \end{cases}\]
    Since each frontal slice of $\mathcal{A}_{\lambda}$ in the frequency domain corresponds to a Hermitian positive definite quartic form, it follows that $\mathcal{A}_{\lambda}$ is $t$-Hermitian positive definite for all $\lambda > 0$. Now consider the tube 
    \[c_+ := (2,-1,-1).\]
    Since $2 \geq |-1|+|-1| = 2$, by Corollary \ref{Symmetric Tubes and Preserving Positivity} it follows that $c_+$ is Fourier positive and $\Phi_{c_+}(\mathcal{A}_{\lambda})$ represents a classical Hermitian positive definite quartic form for all $\lambda > 0$. 
    
    In particular, applying the FFT to $c_+$, we obtain $\widehat{c_+} = (0,3,3)$, and by equation \eqref{contraction hypermatrix in frequency domain} we have that 
    \[M_{sym}\big(\Phi_{c_+}(\mathcal{A}_{\lambda})\big) = \frac{1}{3}\big(0\cdot\lambda S_1 + 3S_2 + 3S_3\big) = S_2+S_3,\]
    annihilating the $\lambda$-dependent component. Thus, for all $\lambda > 0$, $\Phi_{c_+}(\mathcal{A}_{\lambda})$ represents the same Hermitian positive definite quartic form obtained from the matrix 
    \[S_2 + S_3 = \left[\begin{array}{ccc}
        6 & 3\sqrt{2}(2+i) & 3+6i \\
        3\sqrt{2}(2-i) & 14 & \sqrt{2}(5+4i) \\
        3-6i & \sqrt{2}(5-4i) & 9
    \end{array}\right].\]
    Clearly this matrix corresponds to a Hermitian positive definite quartic form since it is the sum of two matrices each representing Hermitian positive definite quartic forms. However, this fact is obscured by only looking at the normalized symmetric matrix representation of $h_{\Phi_{c_+}(\mathcal{A}_{\lambda})}(x)$ or the form itself. Indeed, note that the determinant of this matrix is $-24$, hence the normalized symmetric matrix representation of $h_{\Phi_{c_+}(\mathcal{A}_{\lambda})}(x)$ is indefinite. Furthermore, when writing out the form explicitly, we obtain 
    \begin{align*}
        h_{\Phi_{c_+}(\mathcal{A}_{\lambda})}(x) &= v(x)^{\dagger}M_{sym}\big(\Phi_c(\mathcal{A})\big)v(x) \\
        &= 6|x_1|^4+28|x_1|^2|x_2|^2+9|x_2|^4 \\
        &\quad + 12\Re{(2+i)|x_1|^2\overline{x_1}x_2} \\
        &\quad + 6\Re{(1+2i)\overline{x_1}^2x_2^2} \\
        &\quad + 4\Re{(5+4i)|x_2|^2\overline{x_1}x_2},
    \end{align*}
    which is not obviously positive. In particular, the latter 3 terms may be negative and the magnitude of their coefficients adds up to 
    \[18\sqrt{5}+4\sqrt{41}\approx 65.862,\]
    whereas the coefficients for the first $3$ necessarily positive terms only adds up to 
    \[6+28+9 = 43,\]
    so the necessarily positive terms do not clearly dominate the potentially negative terms. As it turns out, the deeper reason why we are guaranteed positivity is because the tube $c_+$ is Fourier positive. In fact, the tube $c_+$ is on the boundary of $\mathbb{T}_3^+$ (a closed set), yet by the Characterizing Positivity-Preserving Contractions Theorem, any tube $c$ outside of $\mathbb{T}_3^+$ does not yield a contraction $\Phi_c$ which preserves positivity; indeed, in this example in particular, we will show that no matter how close such a tube $c$ is to the boundary of $\mathbb{T}_3^+$, if it is not in $\mathbb{T}_3^+$, then there exists $\lambda > 0$ such that $\Phi_c(\mathcal{A}_{\lambda})$ does not represent a Hermitian positive form. 
    
    We demonstrate the sharpness of this condition by considering the tube 
    \[c_{\epsilon} = (2-\epsilon,-1,-1),\]
    with $\epsilon > 0$ arbitrary. Since $2-\epsilon \ngeq |-1|+|-1| = 2$, $c_{\epsilon}$ is not Fourier positive, but note that it can be taken to be arbitrarily close to $\mathbb{T}_3^+$ by taking $\epsilon$ to be arbitrarily small. By applying the FFT to $c_{\epsilon}$, we obtain $\widehat{c_{\epsilon}} = (-\epsilon,3-\epsilon,3-\epsilon)$, and so by equation \eqref{contraction hypermatrix in frequency domain} we have that 
    \[M_{sym}\big(\Phi_{c_{\epsilon}}(\mathcal{A}_{\lambda})\big) = \frac{1}{3}\big(-\lambda\epsilon S_1 + (3-\epsilon)S_2 + (3-\epsilon)S_3\big).\]
    If $e_1$ denotes the basis vector $\left[\begin{array}{c}
        1 \\
        0 
    \end{array}\right]$, then $v(e_1) = \left[\begin{array}{c}
        1 \\
        0 \\
        0
    \end{array}\right]$, hence it follows that 
    \[h_{\Phi_{c_{\epsilon}}(\mathcal{A}_{\lambda})}(e_1) = \frac{1}{3}\big(-\lambda\epsilon\underbrace{S_1[1,1]}_{3} + (3-\epsilon)\underbrace{S_2[1,1]}_{3} + (3-\epsilon)\underbrace{S_3[1,1]}_{3}\big) = 6 - (2+\lambda)\epsilon\]
    which is strictly less than $0$ whenever $\frac{6}{\epsilon}-2 < \lambda$. Therefore, although $\mathcal{A}_{\lambda}$ is $t$-Hermitian positive definite for every $\lambda > 0$ and $c_{\epsilon}$ may be taken to be arbitrarily close to the positive cone $\mathbb{T}_3^+$, the fact that $c_{\epsilon}\notin \mathbb{T}_3^+$ for each $\epsilon > 0$ implies that positivity is eventually violated. More precisely, for every $\epsilon > 0$, choosing $\lambda$ such that $\lambda > \frac{6}{\epsilon}-2$ produces a $t$-Hermitian positive definite hypermatrix $\mathcal{A}_{\lambda}$ for which the contraction $\Phi_{c_{\epsilon}}(\mathcal{A}_{\lambda})$ is not Hermitian positive definite. Thus, the positive cone $\mathbb{T}_3^+$ is the exact region for which tubes yield positivity-preserving contractions.  

    Now, back to positivity-preserving contraction $\Phi_{c_+}(\mathcal{A}_{\lambda})$. By Proposition \ref{Quantitative Lower Bound for Contraction Induced Forms}, 
    \begin{equation}\label{lower bound}
        h_{\Phi_c(\mathcal{A})}(x) \geq \frac{1}{3}(0\alpha_1+3\alpha_2+3\alpha_3)\|x\|^4 = (\alpha_2+\alpha_3)\|x\|^4,
    \end{equation}
    where recall \[\alpha_{\ell} := \min\limits_{\|z\|=1}h_{\widehat{\mathcal{A}}_{(\ell)}}(x) = \min\limits_{\|z\|=1}v(x)^{\dagger}S_{\ell}v(x)\]
    for each $\ell\in [3]$. Since
    \begin{align*}
        v(x)^{\dagger}S_1v(x) &= 3|x_1|^4-4|x_1|^2|x_2|^2+3|x_2|^4 \\
        &= \frac{1}{2}\big(|x_1|^2+|x_2|^2\big)^2+\frac{5}{2}\big(|x_1|^2-|x_2|^2\big)^2 \\
        &\geq \frac{1}{2}\|x\|^4,
    \end{align*}
    it follows that 
    \begin{align*}
        v(x)^{\dagger}S_2v(x) &= v(Q_1x)^{\dagger}S_1v(Q_1x) \\
        &\geq \frac{1}{2}\|Q_1x\|^4 \\
        &\geq \frac{\sigma_{\min}(Q_1)^4}{2}\|x\|^4
    \end{align*}
    and 
    \begin{align*}
        v(x)^{\dagger}S_3v(x) &= v(Q_{1+i}x)^{\dagger}S_1v(Q_{1+i}x) \\
        &\geq \frac{1}{2}\|Q_{1+i}x\|^4 \\
        &\geq \frac{\sigma_{\min}(Q_{1+i})^4}{2}\|x\|^4,
    \end{align*}
    with $\sigma_{\min}$ denoting the minimum singular value.
    One can check that $\sigma_{\min}(Q_1)^2 = \frac{3-\sqrt{5}}{2}$ and $\sigma_{\min}(Q_{1+i})^2 = 2-\sqrt{3}$, therefore it follows that 
    \[\alpha_2 \geq \frac{1}{2}\left(\frac{3-\sqrt{5}}{2}\right)^2 = \frac{7-3\sqrt{5}}{4}\]
    and 
    \[\alpha_3 \geq \frac{1}{2}(2-\sqrt{3})^2 = \frac{7-4\sqrt{3}}{2}.\]
    Thus, by equation \eqref{lower bound}, we have that 
    \[h_{\Phi_{c_+}(\mathcal{A}_{\lambda})}(x) \geq \frac{21-3\sqrt{5}-8\sqrt{3}}{4}\|x\|^4.\]
    Note that $\frac{21-3\sqrt{5}-8\sqrt{3}}{4}\approx 0.1088$, hence we conclude that 
    \[h_{\Phi_{c_+}(\mathcal{A}_{\lambda})}(x) > 0.108\|x\|^4\]
    for all $\lambda > 0$. 
\end{example}

\section{Conclusion}
In this article, we introduced higher degree $t$-Hermitian forms which intrinsically are tube-valued analogues of classical degree $k$ Hermitian forms. We showed that these objects correspond uniquely to odd order hypermatrices whose frontal slices in the Fourier basis are conjugate partially symmetric, and from this fact it follows that $t$-Hermitian forms decompose into a Fourier-packaged collection of classical degree $k$ Hermitian forms. This decomposition yields spectral characterization of positivity in terms of the $\widehat{H}$-eigenvalues of the frontal slices of the corresponding hypermatrices in the Fourier basis, and as a consequence of this we obtain sufficient conditions for positivity of $t$-Hermitian forms in terms of normalized symmetric matricizations of said frontal slices. 

After developing this general theory of higher-degree $t$-Hermitian forms, we then considered tensor contractions along their tubal mode, which yield classical Hermitian forms, and characterize when they are positive definite/semidefinite as well as derive quantitative lower bounds for their positivity margins. We then demonstrate the sharpness of our characterization result and the the utility of the quantitative lower bounds with a concrete numerical example. 

Thus, the study of $t$-Hermitian forms not only provides a tubal extension of classical Hermitian form theory, but also a structured mechanism for producing and analyzing classical Hermitian positive forms through tensor contractions of tube-valued forms.

\appendix
\section{Proofs for Facts Regarding Symmetric Matricizations}
First we prove Fact \ref{Position in lexicographic order}, which recall states that the position of a tuple $\mathbf{i} = (i_1,\dots,i_k)\in \mathcal{I}_k$ is 
\[\varphi(\mathbf{i}) = 1+\sum\limits_{q=1}^k\binom{i_q+q-2}{q}.\]
\begin{proof}
    Let $\mathbf{i} = (i_1,\dots,i_k)\in \mathcal{I}_k$ and define $\widetilde{\mathbf{i}} := (n-i_k+1,\dots,n-i_1+1)$; note that the map $\mathbf{i}\mapsto \widetilde{\mathbf{i}}$ is an involution on $\mathcal{I}_k$. Moreover, $\mathbf{i} < \mathbf{j} \iff \widetilde{\mathbf{j}} < \widetilde{\mathbf{i}}$ with respect to the lexicographic order; hence, counting the number of tuples $\mathbf{j} = (j_1,\dots,j_k)\in \mathcal{I}_k$ such that $\mathbf{j} > \mathbf{i}$ is equivalent to counting all tuples in $\mathcal{I}_k$ that are less than $\widetilde{\mathbf{i}}$. 

    It is well known that the number of tuples $\mathbf{j} = (j_1,\dots ,j_k)$ such that $1\leq j_1\leq\dots\leq j_k\leq n$ is equal to $\binom{n+k-1}{k}$, hence it follows that the number of tuples $\mathbf{j} = (j_1,\dots,j_k)$ such that $m\leq j_1\leq\dots\leq j_k\leq n$ is equal to $\binom{n+k-m}{k}$. In the lexicographical order, $(j_1,\dots ,j_k) > (i_1,\dots, i_k)$ if and only if there exists $l\in [k]$ such that $j_l > i_l$ and $j_1= i_1,...,j_{l-1}=i_{l-1}$, or $j_1 > i_1$; in either case, the last $k-l+1$ entries of $(j_1,...,j_k)$ are subject only to the restriction that $i_l+1 \leq j_l\leq j_{l+1}\leq\dots\leq j_k\leq n$, from which it follows that there are exactly 
    \[\sum\limits_{l=1}^k\binom{n+(k-l+1)-(i_l+1)}{n-l+1} = \sum\limits_{l=1}^k\binom{n+k-l-i_l}{n-l+1}\] 
    many tuples $\mathbf{j} = (j_1,\dots,j_k)\in \mathcal{I}_k$ greater than $\mathbf{i} = (i_1,\dots ,i_k)$. 

    Now, since $\mathbf{i}\mapsto \widetilde{\mathbf{i}}$ is an involution on $\mathcal{I}_k$, there exists a unique $\mathbf{j}\in \mathcal{I}_k$ such that $\mathbf{i} = \widetilde{\mathbf{j}}$. Hence, there are exactly 
    \begin{align*}
        \sum\limits_{l=1}^k\binom{n+k-l-i_l}{k-l+1} &= \sum\limits_{l=1}^k\binom{n+k-l-(n-j_{k-l+1}+1)}{k-l+1} \\
        &= \sum\limits_{l=1}^k\binom{j_{k-l+1}+k-l-1}{k-l+1} \\
        &= \sum\limits_{l=1}^k\binom{j_{k-l+1}+(k-1+1)-2}{k-l+1} \\
        &= \sum\limits_{q=1}^k\binom{j_q+q-2}{q},\quad \text{setting }q:=k-l+1,
    \end{align*}
    tuples less than $\widetilde{\mathbf{j}}$. Consequently, the position of $\widetilde{\mathbf{j}} = \mathbf{i}$ is exactly $1+\sum\limits_{q=1}^k\binom{i_q+q-2}{q}$.  
\end{proof}
Next, we prove Fact \ref{Symmetric Matricization Embedding}, which recall stated that $M_{sym}(\mathcal{A})$ is Hermitian and satisfies
\[M_{sym} = U^{\dagger}M_{cb}(\mathcal{A})U,\]
where $U$ is the $n^k\times \binom{n+k-1}{k}$ matrix representing the isometric embedding $\mathrm{Sym}^k(\mathbb{C}^n)\hookrightarrow \mathbb{C}^{n^k}$ given by 
\[e_{\varphi(\mathbf{i})}\mapsto \frac{\sqrt{\eta(\mathbf{i})}}{k!}\sum\limits_{\sigma\in S_k}\underbrace{e_{i_{\sigma(1)}}\otimes\dots\otimes e_{i_{\sigma(k)}}}_{e_{\psi(\sigma^{-1}\cdot \mathbf{i})}}.\]
\begin{proof}
    First note that $U$ indeed represents an isometry; indeed, observe
    \begin{align*}
        \langle Ue_{\varphi(\mathbf{i})},Ue_{\varphi(\mathbf{j})}\rangle &= \frac{\sqrt{\eta(\mathbf{i})\eta(\mathbf{j})}}{(k!)^2}\sum\limits_{\sigma,\tau\in S_k}\langle e_{i_{\sigma(1)}}\otimes\dots\otimes e_{i_{\sigma(k)}},e_{j_{\tau(1)}}\otimes\dots\otimes e_{j_{\tau(k)}}\rangle  \\
        &= \frac{\sqrt{\eta(\mathbf{i})\eta(\mathbf{j})}}{(k!)^2}\sum\limits_{\sigma,\tau\in S_k}\prod\limits_{l=1}^k\langle e_{i_{\sigma(l)}},e_{j_{\tau(l)}}\rangle.
    \end{align*}
    $\prod\limits_{l=1}^k\langle e_{i_{\sigma(l)}},e_{j_{\tau(l)}}\rangle = 0$ unless $i_{\sigma(l)} = j_{\tau(l)}$ for each $l\in [k]$, hence the sum above is trivial unless $\sigma^{-1}\cdot \mathbf{i} = \tau^{-1}\cdot \mathbf{j} \iff \mathbf{i} = (\sigma\tau^{-1})\cdot \mathbf{j} \iff \mathbf{i} = \mathbf{j}$, since $\mathbf{i},\mathbf{j}\in \mathcal{I}_k$. Furthermore, when $\mathbf{i}=\mathbf{j}$, $\langle i_{\sigma(l)},i_{\tau(l)}\rangle=1$ for every $l\in [k]$ if and only if $\sigma^{-1}\cdot \sigma = \tau^{-1}\cdot \tau \iff \mathbf{i} = (\sigma\tau^{-1})\cdot \mathbf{i}$, implying that $\sigma\tau^{-1}\in \mathrm{Stab}_{S_k}(\mathbf{i})$. Therefore, 
    \begin{align*}
        \langle Ue_{\varphi(\mathbf{i})},Ue_{\varphi(\mathbf{i})}\rangle &= \frac{\eta(\mathbf{i})}{(k!)^2}\sum\limits_{\sigma,\tau\in S_k}\prod\limits_{l=1}^k\langle e_{i_{\sigma(l)}},e_{i_{\tau(l)}}\rangle \\
        &= \frac{1}{k!\mathrm{Stab}_{S_k}(\mathbf{i})}\sum\limits_{\sigma\in S_k}\left(\sum\limits_{\substack{\tau\in S_k\text{ and } \\ \sigma\tau^{-1}\in \mathrm{Stab}_{S_k}(\mathbf{i})}}1\right),\quad \text{by orbit-stabilizer theorem} \\
        &= \frac{1}{k!\mathrm{Stab}_{S_k}(\mathbf{i})}\sum\limits_{\sigma\in S_k}|\mathrm{Stab}_{S_k}(\mathbf{i})| \\
        &= 1.
    \end{align*}
    Thus, $U$ is in fact an isometry. 

    Next, observe that 
    \begin{align*}
        U\big(M_{sym}(\mathcal{A})\big)U^T &= U\left(\sum\limits_{\mathbf{i},\mathbf{j}\in \mathcal{I}_k}\sqrt{\eta(\mathbf{i})\eta(\mathbf{j})}a_{\mathbf{i}\mathbf{j}}e_{\varphi(\mathbf{i})}\circ e_{\varphi(\mathbf{j})}\right)U^T \\
        &= \sum\limits_{\mathbf{i},\mathbf{j}\in \mathcal{I}_k}\sqrt{\eta(\mathbf{i})\eta(\mathbf{j})}a_{\mathbf{i}\mathbf{j}}(Ue_{\varphi(\mathbf{i})})\circ (Ue_{\varphi(\mathbf{j})}) \\
        &= \sum\limits_{\mathbf{i},\mathbf{j}\in \mathcal{I}_k}\sqrt{\eta(\mathbf{i})\eta(\mathbf{j})}a_{\mathbf{i}\mathbf{j}}\left(\frac{\sqrt{\eta(\mathbf{i})}}{k!}\sum\limits_{\sigma\in S_k}e_{\psi(\sigma^{-1}\cdot\mathbf{i})}\right)\circ \left(\frac{\sqrt{\eta(\mathbf{j})}}{k!}\sum\limits_{\tau\in S_k}e_{\psi(\tau^{-1}\cdot\mathbf{j})}\right) \\
        &= \sum\limits_{\mathbf{i},\mathbf{j}\in \mathcal{I}_k}\frac{\eta(\mathbf{i})\eta(\mathbf{j})}{(k!)^2}a_{\mathbf{i}\mathbf{j}}\left(\mathrm{Stab}_{S_k}(\mathbf{i})\sum\limits_{\boldsymbol{\ell}\in \mathrm{Orb}_{S_k}(\mathbf{i})}e_{\psi(\boldsymbol{\ell})}\right)\circ \left(\mathrm{Stab}_{S_k}(\mathbf{j})\sum\limits_{\mathbf{m}\in \mathrm{Orb}_{S_k}(\mathbf{i})}e_{\psi(\mathbf{m})}\right) \\
        &= \sum\limits_{\mathbf{i},\mathbf{j}\in\mathcal{I}_k}a_{\mathbf{i}\mathbf{j}}\left(\sum\limits_{\substack{\boldsymbol{\ell}\in \mathrm{Orb}_{S_k}(\mathbf{i}) \\ \mathbf{m}\in \mathrm{Orb}_{S_k}(\mathbf{j})}}e_{\psi(\boldsymbol{\ell})}\circ e_{\psi(\mathbf{m})}\right),\quad \text{by orbit-stabilizer theorem} \\
        &= \sum\limits_{\mathbf{i},\mathbf{j}\in [n]^k}a_{\mathbf{i}\mathbf{j}}e_{\psi(\mathbf{i})}\circ e_{\psi(\mathbf{j})} \\
        &= M_{cb}(\mathcal{A}).
    \end{align*}
    Moreover, since $U$ is an isometry, we have that $U^TU = I_{\binom{n+k-1}{k}}$, hence it immediately follows that $M_{sym}(\mathcal{A}) = U^TM_{cb}(\mathcal{A})U$. Additionally, since $U$ has only real entries and $M_{cb}$ is Hermitian, this implies that $\big(M_{sym}(\mathcal{A})\big)^{\dagger} = U^TM_{cb}(\mathcal{A})U = M_{sym}(\mathcal{A})$, proving that $M_{sym}(\mathcal{A})$ is Hermitian.  
\end{proof}

\section{Proofs for Facts Regarding the $t$-Product Algebra}
First we prove Fact \ref{t-conjugate transpose in frequency domain}, which recall states that for any hypermatrix $\mathcal{A}\in \mathbb{C}^{m\times n\times p}$, 
\[\mathcal{A}^H = \mathrm{ifft}_3\left(\widehat{\mathcal{A}}^{\widetilde{H}}\right).\]
\begin{proof}
    Let $\mathcal{A}\in \mathbb{C}^{m\times n\times p}$ and define 
    \[\mathcal{A}^{H_F} := \mathrm{ifft}_3\Big(\widehat{\mathcal{A}}^{\widetilde{H}}\Big).\]
    We will prove that 
    \[\mathcal{A}^{H_F} = \mathcal{A}^H.\]
    By our definition for $H_F$ and Fact \ref{bcirc property} we have that 
    \begin{align*}
        \mathrm{bcirc}(\mathcal{A}^{H_F}) &= (DFT_p^H\otimes I_n)\mathrm{diag}\left(\left(\widehat{\mathcal{A}}_{(1)}\right)^H,\left(\widehat{\mathcal{A}}_{(2)}\right)^H....,\left(\widehat{\mathcal{A}}_{(p)}\right)^H\right)(DFT_p\otimes I_,) \\
        &= (DFT_p^H\otimes I_n)\mathrm{diag}\big(\widehat{\mathcal{A}}_{(1)},\widehat{\mathcal{A}}_{(2)}...,\widehat{\mathcal{A}}_{(p)}\big)^H(DFT_p\otimes I_m) \\
        &= \left((DFT_p^H\otimes I_m)\mathrm{diag}\big(\widehat{\mathcal{A}}_{(1)},\widehat{\mathcal{A}}_{(2)},...,\widehat{\mathcal{A}}_{(p)}\big)(DFT_p\otimes I_n)\right)^H \\
        &= \mathrm{bcirc}(\mathcal{A})^H.
    \end{align*}
    On the other hand, by definition of $t$-conjugate transpose, 
    \[\mathrm{bcirc}(\mathcal{A}^H) = \left[\begin{array}{cccc}
        \left(\mathcal{A}_{(1)}\right)^H & \left(\mathcal{A}_{(2)}\right)^H & \dots & \left(\mathcal{A})_{(p)}\right)^H \\
        \left(\mathcal{A}_{(p)}\right)^H & \left(\mathcal{A}_{(1)}\right)^H & \dots & \left(\mathcal{A}_{(p-1)}\right)^H \\
        \vdots & \vdots & \ddots & \vdots \\
        \left(\mathcal{A}_{(2)}\right)^H & \left(\mathcal{A}_{(3)}\right)^H & \dots & \left(\mathcal{A}_{(1)}\right)^H
    \end{array}\right],\]
    hence 
    \[\mathrm{bcirc}(\mathcal{A}^H)^H = \left[\begin{array}{cccc}
        \mathcal{A}_{(1)} & \mathcal{A}_{(p)} & \dots & \mathcal{A}_{(2)} \\
        \mathcal{A}_{(2)} & \mathcal{A}_{(1)} & \dots & \mathcal{A}_{(3)} \\
        \vdots & \vdots & \ddots & \vdots \\
        \mathcal{A}_{(p)} & \mathcal{A}_{(p-1)} & \dots & \mathcal{A}_{(1)}
    \end{array}\right] = \mathrm{bcirc}(\mathcal{A});\]
    that is, $\mathrm{bcirc}(\mathcal{A}^H) = \mathrm{bcirc}(\mathcal{A})^H$. Thus, 
    \[\mathrm{bcirc}(\mathcal{A}^{H_F}) = \mathrm{bcirc}(\mathcal{A}^{H}),\]
    which implies that $\mathcal{A}^{H_F} = \mathcal{A}^H$ since in general $\mathrm{bcirc}:\mathbb{C}^{n\times m\times p}\rightarrow \mathbb{C}^{np\times mp}$ is injective. 
\end{proof}
Next, we prove Fact \ref{Fourier conjugation}, which recall states that for any $\mathcal{A}\in \mathbb{C}^{m\times n\times p}$, 
\[\mathrm{fft}_3\big(J(\mathcal{A})\big)_{(\ell)} = \overline{\mathrm{fft}_3(\mathcal{A})_{(\ell)}}\]
for each $\ell\in [p]$, where recall 
\[J(\mathcal{A})_{(\ell+1)} := \overline{\mathcal{A}_{\big((-\ell\bmod p)+1\big)}}\]
for $\ell=0,1,\dots,p-1$. 
\begin{proof}
    For a fixed choice of $(i,j)$, let $a = (a_0,a_1,...,a_{p-1})$ with $a_{\ell} := \mathcal{A}[i,j,\ell+1]$ for $\ell=0,1,\dots,p-1$, and then define $b = (b_0,b_1,...,b_{p-1})$ with $b[\ell] := \overline{a[-\ell\bmod p]}$ for each $\ell=0,1,\dots,p-1$. Then applying the FFT to $b$, for each $\ell=0,1,\dots,p-1$ we have that 
\begin{align*}
    \widehat{b}[\ell] &= \sum\limits_{m=0}^{p-1}\overline{a[-m\bmod p]}e^{-\frac{2\pi i \ell m }{p}} \\
    &= \overline{a_0}+\sum\limits_{m'=-(p-1)}^{-1}\overline{a[m'\bmod p]}e^{\frac{2\pi i \ell m'}{p}},\quad \text{setting }m':=-m \\
    &= \overline{a_0}+\sum\limits_{m'=1}^{p-1}\overline{a[m']}e^{\frac{2\pi i \ell m'}{p}},\quad \text{by cyclicity} \\
    &= \overline{\sum\limits_{m'=0}^{p-1}a[m']e^{-\frac{2\pi i \ell m'}{p}}} \\
    &= \overline{\widehat{a}[\ell]}. 
\end{align*}
Consequently, 
\[\widehat{J(\mathcal{A})}_{(\ell)} = \overline{\widehat{\mathcal{A}}_{(\ell)}}\]
for each $\ell\in [p]$.
\end{proof}

\bibliographystyle{ieeetr}
\bibliography{references}

\end{document}